\newtheorem{proposition}{Proposition}[section]
\newtheorem{lemma}[proposition]{Lemma}
\newtheorem{theorem}[proposition]{Theorem}
\theoremstyle{remark}
\newtheorem{remark}[proposition]{Remark}
\theoremstyle{definition}
\newtheorem{definition}[proposition]{Definition}
\DeclareMathOperator{\R}{\mathbb{R}}
\DeclareMathOperator{\T}{\mathbb{T}}
\title[BRS vs. MFG] {Comparing the Best Reply strategy and Mean Field Games: The stationary case}
\author{Matt Barker}
\address{Department of Mathematics, Imperial College London, London, SW7 2AZ, UK and Grantham Institute, Imperial College London, London, SW7 2AZ, UK \\
Email: m.barker17@imperial.ac.uk}
\author{Pierre Degond}
\address{Department of Mathematics, Imperial College London, London, SW7 2AZ, UK \\
Email: p.degond@imperial.ac.uk}
\author{Marie-Therese Wolfram}
\address{University of Warwick, Mathematics Institute, Gibbet Hill Road, CV47AL Coventry, UK and Radon Institute for Computational and Applied Mathematics, Altenbergerstr. 69, 4040 Linz, Austria \\
Email: m.wolfram@warwick.ac.uk}
\date{}
\begin{document}

\maketitle

\section*{Acknowledgements}
\noindent MB acknowledges support by the Natural Environment Research Council (NERC) under training grant no. NE/L002515/1. PD acknowledges support by the Engineering and Physical Sciences Research Council (EPSRC) under grants no. EP/M006883/1 and EP/N014529/1, by the Royal Society and the Wolfson Foundation through a Royal Society Wolfson Research Merit Award no. WM130048 and by the National Science Foundation (NSF) under grant no. RNMS11-07444 (KI-Net). PD is on leave from CNRS, Institut de Math\'ematiques de Toulouse, France. MTW  acknowledges  partial  support  from  the  Austrian  Academy  of
Sciences under the New Frontier’s grant NST-001 and the EPSRC under the First Grant EP/P01240X/1. 

\section*{AMS subject classification}
35Q84, 35Q91, 35J15, 35J57, 91A13, 91A23, 49N70, 34C60, 37M05

\section*{Key words}
Mean field games, best reply strategy, stationary Fokker-Planck equation

\section*{Data statement: no new data were collected in the course of this research.}

\section{Abstract}
\noindent Mean field games (MFGs) and the best reply strategy (BRS) are two methods of describing competitive optimisation of systems of interacting agents. The latter can be interpreted as an approximation of the respective MFG system, see ~\cite{Barker,Bertucci2019,Degond2017}. In this paper we present a systematic analysis and comparison of the two approaches in the stationary case. We provide novel existence and uniqueness results for the stationary boundary value problems related to the MFG and BRS formulations, and we present an analytical and numerical comparison of the two paradigms in a variety of modelling situations.

\section{Introduction}

\noindent Mean field games (MFGs) describe the dynamics of large interacting agent systems, in which individuals determine their optimal strategy by minimising a given cost functional. The extensive current literature is based on the original work of Lasry and Lions~\cite{Lasry2006,Lasry2006a,Lasry2007} and Huang, Caines and Malham\'e~\cite{Huang2007a,Huang2006,Huang2006b}. MFGs have been used successfully in many different disciplines. A good overview is presented by Caines, Huang and Malham\'e in~\cite{Caines2017}, a detailed probabilistic approach by Carmona and Delarue in~\cite{Delarue2018,Delarue2018a}.

MFGs can be formulated as parabolic optimal control problems (under certain conditions on the cost). This connection can be used to construct approximations to MFGs. Degond, Liu and Ringhofer proposed a so-called best reply strategy (BRS) in~\cite{Degond2014b}. It can be derived from the corresponding explicit in time discretisation of the respective optimal control problem, as in~\cite{Barker,Degond2017}. More recently it has been derived in~\cite{Bertucci2019} through considering a discounted optimal control problem and taking the discount factor to $\infty$. Specifically, in~\cite{Barker,Degond2017} the limit $\Delta t\rightarrow 0$ in the case of the following cost functional 
\[ J^{\Delta t}(\alpha;m) = \mathbb{E} \left[ \int_t^{t + \Delta t} \left(\frac{\alpha_s^2}{2} + \frac{1}{\Delta t} h(X_s,m(X_s))\right)~ ds \right] \, , \]
is analysed. In~\cite{Bertucci2019} the limiting behavior of MFG systems for cost functionals
\[ J^\rho(\alpha;m) = \mathbb{E} \left[ \int_0^T \left( \frac{\alpha_s^2}{2} + h(X_s,m(X_s)) \right) e^{- \rho s}~ds \right] \, , \]
as the temporal discount factor $\rho$ tends to infinity is considered. In both cases the resulting dynamics depend instantaneously on the cost function $h$. Hence agents do not anticipate future dynamics in the respective limits, as they do in MFG approaches.

As far as the authors are aware, no systematic analysis and comparison of the two approaches has been done yet. However, the BRS is computationally less expensive and therefore more attractive in applications. Therefore it is important to understand under which circumstances the use of each model is appropriate and whether there are situations where the two models are comparable. This paper is a first step analysing the similarities and differences between the two models in a systematic way.

The existence and uniqueness of solutions to stationary MFGs has been studied extensively in previous literature ( c.f.~\cite{Cardaliaguet2015,Ferreira2018,Gomes2017,Lasry2007}). However, apart from a small number of papers e.g.~\cite{Cirant2015,Ferreira2019}, almost all results focus on problems posed on the torus in order avoid dealing with boundary conditions. In~\cite{Benamou2017} the Dirichlet problem was motivated as a stopping time problem, and it was analysed in~\cite{Ferreira2019}. In this paper we consider Neumann boundary conditions, which relate to a no-flux boundary. The only other paper we are aware of that deals with such a situation is~\cite{Cirant2015}. In this paper the authors prove existence of solutions to the MFG problem with non-local dependence on the distribution using a Schauder fixed point argument. They then perturb the solutions to prove existence in the case of a local dependence on the distribution. Other typical methods of proof use continuation methods~\cite{Evangelista2018,Ferreira2018,Gomes2014}, Schauder's fixed point theorem~\cite{Boccardo2016,Cirant2016} or variational approaches through energy minimisation problems~\cite{Cesaroni2018,Evangelista2018a}. In our proof we exploit the linear-quadratic nature of the control. This was done in the time-dependent case in~\cite{Gueant2012} where the problem was reduced to a forward-backward system of heat equations, but we don't think our method has been considered in the stationary case. Our result sits nicely alongside the only other result for Neumann boundary conditions~\cite{Cirant2015}. On the one hand the Hamiltonian used in~\cite{Cirant2015} is more general than ours, however the regularity assumptions and the form of nonlinearity $h$ required in~\cite{Cirant2015} is relaxed in our case.

Due to assumption \ref{a:hincrease}, which states that the running cost $h$ is an increasing function of density, we are in the setting of monotone stationary MFGs. Existence and uniqueness of such MFGs has been studied extensively by Gomes and collaborators in a number of papers e.g~\cite{Evangelista2018a,Gomes2017,Gomes2016}. Although the setting in these papers focusses on domains with periodic boundary conditions, it is worth mentioning the types of techniques used and how they compare to the method in this paper. In \cite{Gomes2017} a Hopf--Cole transformation is used to prove existence and uniqueness of minimisers of an energy functional related to a specific case of an MFG with periodic boundary conditions and a cost $h$ that is logarithmic in the density. The concepts used in our existence and uniqueness proof are similar to those used in \cite{Gomes2017}, though we are able to generalise the density dependence and consider Neumann boundary value problems. In \cite{Gomes2014} the results of \cite{Gomes2017} are extended using a continuation method. There were further improved in \cite{Ferreira2018} where a combination of a continuation method and Minty's method is used. In both cases the methods allow the authors to perturb a problem for which existence and uniqueness is known to prove existence and uniqueness of the problem of interest. The methods used there and in many subsequent works (e.g. \cite{Ferreira2019}) rely on monotonicity properties of the operators. In our work presented here monotonicity similarly plays a central role in proving existence and uniqueness --- through both the use of the maximum principle to prove existence and uniqueness for strictly increasing functions $h$ and through the ability to uniformly perturb an increasing function into a strictly increasing function through the addition of a logarithmic congestion term.

Our non-linear stationary BRS model~\eqref{eq:brssystem} is an example of a stationary non-linear Fokker-Planck equation. Existence and uniqueness of solutions to non-linear Fokker-Planck equations have been studied extensively (see for example~\cite{Carrillo2019,Carrillo2006} and references therein). Many results (e.g. in~\cite{Carrillo2019,Chayes2010,Tugaut2014}) focus on non-local non-linear terms i.e. they consider Fokker-Planck equations of the form
\begin{equation} \label{eq:intro-nonloc-fp}
    - \frac{\sigma^2}{2} \nabla^2 m - \nabla \cdot (m \nabla W*m) = 0 \, ,
\end{equation}
with suitable boundary conditions. Here $\nabla W*m = \int_{\Omega} \nabla W(x - y) m(y)~dy$ is the usual convolution operator. For our model, we consider a local function of density $h = h(x,m(x))$, rather than a convolution term. In this case there are a number of results of existence and uniqueness of solutions to the stationary model, as well as convergence of the dynamic model to the stationary version, see e.g.~\cite{Biane2001,Carrillo2003,Carrillo2006,McCann1997}. These papers all consider the term $h$ to be either independent of $x$ i.e. $h = h(m)$, or of the form $h = h_1(x) + h_2(m)$. So our result extends this case to more general local functions of the density. In previous literature the proof for the local case, as in~\cite{McCann1997}, relies on a related energy functional, for which minimisers can be proven to exist and be unique. Then these minimisers are also solutions to the Fokker-Planck equation. Our result takes a different approach, one that is more closely related to the case with a convolution term, as in~\cite{Carrillo2019}. For the non-local case~\eqref{eq:intro-nonloc-fp} it has been frequently shown (c.f.~\cite{Carrillo2019,Tamura1984}) that solutions of the PDE are equivalent to fixed points of a non-linear map 
\[ T(m) = \frac{1}{Z} e^{- \frac{2}{\sigma^2} W*m} \, , \quad \text{where } Z = \int_{\Omega} e^{- \frac{2}{\sigma^2} W*m}~dx \, . \]
We approach existence and uniqueness of solutions to our PDE~\eqref{eq:brssystem} in a similar vein, considering solutions to the implicit equation~\eqref{eq:xu_brs_sys1}. While the proof in~\cite{Carrillo2019} relies on Schauder’s fixed point theorem, we are able to take advantage of $h$ being a local function of density so instead we use the implicit function theorem and intermediate value theorem to prove our result. 

This paper is organised as follows. We start by briefly introducing the time-dependent MFG and BRS models in Section~\ref{sec:dynamic_setup}, following a more detailed derivation presented in~\cite{Barker,Degond2017}. We then describe how the dynamic problems relate to the stationary case. In Section~\ref{sec:ex_unique_stat_sol} we present a proof of existence and uniqueness for the stationary BRS and MFG. Both proofs use similar arguments for proving existence and uniqueness, relying on the observation that both models involve a stationary Fokker-Planck equation with integral constraints. In Section~\ref{sec:quad potential} we describe an explicit solution to the MFG and BRS model. The explicit solution allows us to analyse in which problem specific parameter ranges the solutions are compareable and in which not.  In Section~\ref{sec:numerical_sim} we illustrate the different behavior of solutions to both models with numerical simulations. Finally, in Section~\ref{sec:conclusion} we conclude with summarising the implications of the results found, specifically what they tell us about the similarities and differences between the two models. We also briefly comment on future directions for research. 

\subsection{The dynamic MFG problem and the corresponding BRS}\label{sec:dynamic_setup}

First we briefly review the underlying modeling assumptions of MFGs and the respective BRS models. For the ease of presentation we consider a quadratic cost on the control and restrict ourselves to the $d$-dimensional torus $\T^d$ in the introduction. However we will consider bounded domains with Neumann boundary conditions from Section~\ref{sec:ex_unique_stat_sol} on. Note that some of the following arguments have not been proven for such a set-up. However it is not unreasonable to assume that the following results extend naturally to the bounded domain case. 
Consider a distribution of agents which is absolutely continuous with respect to the Lebesgue measure. We denote the density of the distribution by a function $m:\T^d \to [0,\infty)$. We take a representative agent, with state $X_t \in \T^d$ moving in this distribution according to the following SDE:
\[ \begin{aligned}
        & dX_t = \alpha_t dt + \sigma dB_t \\
        & \mathcal{L}(X_0) = m_0 \, ,
    \end{aligned} \]
where $\mathcal{L}(X_0)$ denotes the law of the random variable $X_0$, $m_0$ is a given initial distribution of all agents, $\sigma \in (0,\infty)$ denotes the size of idiosyncratic noise in the model and $B_t$ is a $d$-dimensional Wiener process. The function $\alpha_t:[0,T] \to \T^d$ is a control chosen by the representative agent as a result of an optimisation problem, from a set of admissable controls $\alpha \in \mathcal{A}$. The representative agent takes the distribution of other agents, $m$, to be given and attempts to optimise the following functional
\[ J(\alpha;m) = \mathbb{E} \left[ \int_0^T \left(\frac{\alpha_s^2}{2} + h(X_s,m(X_s))\right)~ds \right] \, . \]
The functional $J$ consists of a quadratic cost for the control $\alpha_t$ and a density dependent cost function $h:\T^d \times (0,\infty) \to \R$ over a finite time horizon $T \in (0,\infty)$. Then the optimal cost trajectory $u(x,t)$ is
\[ u(x,t) = \inf_{\alpha \in \mathcal{A}} \mathbb{E} \left[ \left. \int_t^T \left(\frac{\alpha_s^2}{2} + h(X_s,m(X_s))\right)~ds \right| X_t = x \right] \, . \]
The optimal control is given in terms of $u$ by $\alpha_t^* = - \nabla u(X_t,t)$. The optimal cost trajectory evolves backwards in time according to
\begin{gather*}
    \partial_t u = \frac{\left| \nabla u \right|^2}{2} - h(x,m) - \frac{\sigma^2}{2} \nabla^2 u \\
        u(x,T) = 0 \, .
\end{gather*}
We complete the model by assuming all agents act in the same way as the representative agent, and so the backward PDE is coupled to a forward Fokker-Planck PDE describing the evolution of agents. So the full MFG model is given by
\begin{subequations} \label{eq:dynmfg}
    \begin{align}
        & \partial_t u = \frac{\left| \nabla u \right|^2}{2} - h(x,m) - \frac{\sigma^2}{2} \nabla^2 u \\
        & \partial_t m = \nabla \cdot \left[m \nabla u \right] + \frac{\sigma^2}{2} \nabla^2 m\\
        & m(x,0) = m_0 \\
        & u(x,T) = 0 \, . 
    \end{align}
\end{subequations}
Following the approach in~\cite{Barker,Degond2017}, the corresponding BRS model arises through considering a rescaled cost functional over a short, rolling time horizon
\[ J^{\Delta t}(\alpha;m) = \mathbb{E} \left[ \left. \int_t^{t + \Delta t} \left( \frac{\alpha_s^2}{2} + \frac{1}{\Delta t} h(X_s,m(X_s)) \right) ds \right| X_t = x \right] \, . \]
Going through a similar procedure to the MFG problem, approximating the result up to $O(\Delta t)$ and taking the limit $\Delta t \to 0$, the optimal control is given by $\alpha_t = - \left. \left[ \nabla h(x,m(x)) \right] \right|_{x = X_t}$. Again we complete the model by assuming all agents act in the same way as the representative agent. Then the distribution of agents evolves according to the Fokker-Planck equation
\begin{subequations}\label{eq:brs}
    \begin{align}
        \partial_t m &= \nabla \cdot \left[ \left( \nabla h(x,m(x)) \right) m  \right] + \frac{\sigma^2}{2} \nabla^2 m\\
        m(x,0) &= m_0 \, .
    \end{align}
\end{subequations}In \eqref{eq:brs} the dynamics of agents is influenced by the current agent density only. Hence the anticipation behavior, which is characteristic for MFG, is `lost'. Only the current state drives the dynamics. We shall refer to equation~\eqref{eq:brs} as the BRS strategy in the following.  

\subsection{From the dynamic problems to the stationary case} \label{sec:stat_prob}

For the MFG the interpretation of the stationary problem is slightly subtle because in the dynamic case we are considering a problem set on a fixed time horizon so we cannot simply consider the stationary problem by setting $\partial_t u,\partial_t m = 0$ and interpreting it as the long-time behaviour of the dynamic case. Instead we follow the work by Cardaliaguet, Lasry, Lions and Porretta~\cite{Cardaliaguet2012}. To highlight the dependence of the MFG on the time horizon $T$ we use the notation $\bar{u}^T,\bar{m}^T$ for solutions satisfying~\eqref{eq:dynmfg}. Then we define the rescaled functions $u^T$ and $m^T$ by
\[ u^T(x,t) = \bar{u}^T(x,tT) \, , \quad m^T(x,t) = \bar{m}^T(x,tT) \, . \]
Then Theorem 1.2 in~\cite{Cardaliaguet2012} states that under some mild assumptions on the data we have that as $T \to \infty$:
\[ \begin{aligned}
    u^T - \int_{\T^d} u^T~dy \to u \, , \quad & \text{in} \, L^2(\T^d \times (0,1)) \\
    \frac{1}{T} u^T \to (1 - t) \lambda \, , \quad & \text{in} \, L^2(\T^d \times (0,1)) \\
    m^T \to m \, , \quad & \text{in} \, L^p(\T^d \times (0,1)) \, ,
\end{aligned} \]
where $p$ depends on the space dimension $d$. Then the triple $(m,u,\lambda) \in C^2(\T^d) \times C^2(\T^d) \times \R$ satisfies the following stationary problem
\begin{subequations}\label{eq:statmfg}
    \begin{align} 
        - \frac{\sigma^2}{2} \nabla^2 m - \nabla \cdot (m \nabla u) &= 0 \\
        - \frac{\sigma^2}{2} \nabla^2 u + \frac{| \nabla u |^2}{2} - h(x,m) + \lambda &= 0 \\
        \int_{\T^d} m~dx &= 1 \\
        \int_{\T^d} u~dx &= 0 \, .
    \end{align}
\end{subequations}
The corresponding stationary BRS model is obtained by setting $\partial_t m=0$ in~\eqref{eq:brs}. Hence we have
\begin{subequations} \label{eq:statbrs}
    \begin{align}
        \nabla \cdot \left[ \left( \nabla h(x,m(x)) \right) m  \right] + \frac{\sigma^2}{2} \nabla^2 m &= 0 \\
        \int_{\T^d} m~dx &= 1 \, .
    \end{align}
\end{subequations}
Equation~\eqref{eq:statbrs} can be understood as either the long-time behaviour of the dynamic BRS or, under suitable convexity conditions (c.f.~\cite{McCann1997}), a competitive equilibrium distribution of the following minimisation problem
\begin{align}\label{eq:Estat}
\min \mathbb{E} \left[ h(X_t,m(X_t)) \right] 
\end{align}
By competitive equilibrium we mean a stationary distribution $m$ for which $\mathbb{E} \left[ h(X,m(X)) \right]$ is minimised when $\mathcal{L}(X) = m$.

\section{Existence and Uniqueness of Stationary Solutions} \label{sec:ex_unique_stat_sol}
In this section we will show that the MFG~\eqref{eq:statmfg} and the BRS~\eqref{eq:statbrs} admit unique solutions on a bounded domain $\Omega$ with no flux boundary conditions. We make the following assumptions on the function ${h(x,m): \Omega \times (0,\infty) \to \R}$ and the domain $\Omega$:
\begin{enumerate}[label=(A$_\arabic*$)]
\item  \label{a:omega} $\Omega \subset \R^d$ is an open bounded set with a $C^{2,\alpha}$ boundary, for some $\alpha \in (0,1)$ and $d \geq 1$.
    \item $h(x,\cdot)$ is an increasing function for every $x \in \Omega$.\label{a:hincrease}
    \item There exists a continuous function $g:(0,\infty) \to [0,\infty)$ such that $\sup_{x \in \Omega} |h(x,m)| \leq g(m)$ for every $m \in (0,\infty)$. \label{a:hbound}
\end{enumerate}
Since $\Omega$ is bounded, we can now define $|\Omega| = \int_{\Omega} dx$, where this integral is with respect to the standard Lebesgue measure. Furthermore, we denote the unit outer normal vector by $\nu$. \\
For the BRS we further assume:
\begin{enumerate}[label=(BRS$_\arabic*$)]
    \item  $h \in C^2 \left( \Omega \times (0,\infty) \right) \cap C^1 \left( \bar{\Omega} \times(0,\infty) \right)$. \label{a:brs1}
    \item There exists a continuous function $f: (0,\infty) \to [0,\infty)$ such that $\sup_{x \in \Omega} | \nabla_x h(x,m)| \leq f(m)$ for every $m \in (0,\infty)$. \label{a:brs2}
\end{enumerate}
While for the MFG we will assume:
\begin{enumerate}[label=(MFG$_\arabic*$)]
    \item $h \in C \left( \Omega \times (0,\infty) \right)$ \label{a:mfg_hreg}
    \item $\lim_{m \to 0} \sup_{x \in \Omega} h(x,m) < \inf_{x \in \Omega} h \left( x,\frac{1}{|\Omega|} \right)$. \label{a:mfg_mto0}
    \item $\sup_{x \in \Omega} h \left( x,\frac{1}{|\Omega|} \right) < \lim_{m \to \infty} \inf_{x \in \Omega} h(x,m)$. \label{a:mfg_mtoinf}
\end{enumerate}

\subsection*{Discussion of assumptions:} Since we are interested in classical solutions (generally in $C^2\left(\Omega\right) \cap C^1\left(\bar{\Omega}\right)$) the above assumptions on the cost and domain ensure sufficient regularity and boundedness of $h$. It is worth mentioning why we need $h$ to be increasing. This assumption of an increasing function can be related to ``crowd aversion''. When $h$ is increasing in $m$ then areas of high density are more highly penalised than low density areas in the optimisation problem related to the MFG and BRS (see Section~\ref{sec:dynamic_setup}). This prevents ``accumulation points" occurring where higher density is preferable and a Dirac delta might be introduced into the solution. As well as being a problem for regularity, the position of the Dirac deltas would be sensitive on the data and so uniqueness could not be guaranteed.

Assumptions~\ref{a:mfg_mto0} and~\ref{a:mfg_mtoinf} are also less intuitive than the rest of the assumptions. The MFG problem has two integral constraints related to it. We prove that these constraints can be satisfied using the intermediate value theorem. In doing so we show that two functions $m_1,m_2$ exist such that $m_1(x) \leq \frac{1}{|\Omega|} \leq m_2(x)$ for every $x \in \Omega$. The existence of these functions is guaranteed if assumptions~\ref{a:mfg_mto0} and~\ref{a:mfg_mtoinf} hold. As it may not be initially clear what kind of function $h$ satisfies our requirements, some sufficient conditions if $h(x,m) = h_1(x) + h_2(m)$ are:
\begin{itemize}
    \item $h_1 \in C^2\left(\Omega\right) \cap C^1\left(\bar{\Omega}\right)$
    \item $h_2 \in C^2\left((0,\infty)\right)$
    \item $h_2$ is increasing
    \item $\lim_{m \to 0} h_2(m) < h_2\left( \frac{1}{|\Omega|} \right) + \inf_{x \in \Omega} h_1(x)$
    \item $\lim_{m \to \infty} h_2(m) > h_2\left( \frac{1}{|\Omega|} \right) + \sup_{x \in \Omega} h_1(x)$
\end{itemize}

\subsection{Best Reply Strategy} \label{section:xu_brs}
We start by defining the notion of classical solutions we are aiming for. 
\begin{definition}
    Let assumptions~\ref{a:omega}--\ref{a:hbound} and~\ref{a:brs1}--\ref{a:brs2} be satisfied. Then the stationary BRS boundary value problem is to find a function $m: \Omega \to (0,\infty)$ satisfying
    \begin{subequations}\label{eq:brssystem}
        \begin{align}
            m \in C^2\left(\Omega\right) \cap C^1 \left(\bar{\Omega}\right)&\\
            - \frac{\sigma^2}{2} \nabla^2 m - \nabla \cdot (m \nabla [h(x,m)]) &= 0 \, , \quad x \in \Omega \label{eq:xu_brs}\\ 
            - \frac{\sigma^2}{2} \nabla m \cdot \nu - m \nabla [h(x,m)] \cdot \nu &= 0 \, , \quad x \in \partial \Omega \label{eq:xu_brs_bc} \\
            \int_{\Omega} m\, dx &= 1 \, . \label{eq:xu_brs_norm}
        \end{align}
    \end{subequations}
\end{definition}
Throughout this subsection we will assume~\ref{a:omega}--\ref{a:hbound} and~\ref{a:brs1}--\ref{a:brs2} hold.

\begin{lemma} \label{lm:xu_brs_1}
For any $Z \in (0, \infty)$ there exists a unique $m_Z: \Omega \to (0,\infty)$ such that 
\begin{equation} \label{eq:xu_brs_mz}
     m_Z(x) = \frac{1}{Z} e^{- \frac{2}{\sigma^2}h(x,m_Z(x))} \, .
\end{equation}
Furthermore, $m_Z \in C^2 \left( \Omega \right) \cap C^1\left(\bar{\Omega}\right)$.
\end{lemma}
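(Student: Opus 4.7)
The plan is to reduce \eqref{eq:xu_brs_mz} to a scalar equation in $m$ that can be solved separately at each point $x$, and then use the implicit function theorem to upgrade the pointwise map $x \mapsto m_Z(x)$ to a function of the required regularity.

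Taking logarithms, \eqref{eq:xu_brs_mz} is equivalent to $\Phi(x,m) = 0$ where
\[ \Phi(x,m) := \log(Zm) + \tfrac{2}{\sigma^2}\, h(x,m) \, . \]
For each fixed $x \in \bar\Omega$, assumption \ref{a:hincrease} gives $\partial_m \Phi(x,m) = \tfrac{1}{m} + \tfrac{2}{\sigma^2} \partial_m h(x,m) \geq \tfrac{1}{m} > 0$, so $\Phi(x,\cdot)$ is strictly increasing on $(0,\infty)$. To control its range, I combine \ref{a:hincrease} with \ref{a:hbound}: for any fixed $m_0 > 0$, the monotonicity of $h(x,\cdot)$ combined with $|h(x,m_0)| \leq g(m_0)$ gives $h(x,m) \leq g(m_0)$ for $m \leq m_0$ and $h(x,m) \geq -g(m_0)$ for $m \geq m_0$. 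Hence $\Phi(x,m) \to -\infty$ as $m \to 0^+$ and $\Phi(x,m) \to +\infty$ as $m \to \infty$, and the intermediate value theorem produces a unique $m_Z(x) \in (0,\infty)$ with $\Phi(x, m_Z(x)) = 0$.

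For regularity, assumption \ref{a:brs1} tells us that $\Phi$ is $C^2$ on $\Omega \times (0,\infty)$, and $\partial_m \Phi(x, m_Z(x)) > 0$ by the computation above. The implicit function theorem then supplies, near each $x_0 \in \Omega$, a $C^2$ local solution of $\Phi(x,\cdot) = 0$, which by pointwise uniqueness must coincide with $m_Z$. Thus $m_Z \in C^2(\Omega)$. Running the same argument on $\bar\Omega$ with the weaker $C^1$ regularity of $\Phi$ there (again from \ref{a:brs1}) yields $m_Z \in C^1(\bar\Omega)$.

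I do not expect a substantial obstacle. The one point that deserves care is the verification of the two limits of $\Phi$ as $m \to 0^+$ and $m \to \infty$, where both halves of the bound \ref{a:hbound} together with the monotonicity \ref{a:hincrease} are needed to control $h(x,m)$ uniformly in $x$ outside a bounded range of $m$; apart from this, everything reduces to a direct IVT-plus-IFT argument.
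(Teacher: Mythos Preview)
Your proposal is correct and follows essentially the same IVT-plus-IFT route as the paper: the paper works with $G_{Z,x}(m) = \tfrac{1}{Z}e^{-\frac{2}{\sigma^2}h(x,m)} - m$ rather than its logarithm, but the monotonicity, the endpoint estimates, and the application of the implicit function theorem for interior $C^2$ regularity are identical in spirit. The one place where the paper is more explicit is the $C^1(\bar\Omega)$ step: rather than invoking the implicit function theorem on the closed set $\bar\Omega$, it differentiates the implicit relation to obtain $\nabla m_Z = -\dfrac{2m_Z\,\nabla_x h(x,m_Z)}{\sigma^2 + 2m_Z\,\partial_m h(x,m_Z)}$, uses the uniform bounds on $m_Z$ together with \ref{a:brs1}--\ref{a:brs2} to show this is bounded (hence $m_Z$ extends continuously to $\partial\Omega$), and then reads off from the same formula that $\nabla m_Z$ itself extends continuously.
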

\begin{proof}
    Fix $Z \in (0,\infty)$ and $x \in \Omega$. Consider $G_{Z,x}: (0,\infty) \to \R$ given by
    \[ G_{Z,x}(m) = \frac{1}{Z} e^{-\frac{2}{\sigma^2}h(x,m)} - m \, . \]
    This is a strictly decreasing function of $m$. Furthermore, since $h$ is increasing and continuous with respect to $m$ and $\sup_{x \in \Omega} h(x,m) \leq g(m)$, we must have $\lim_{m \to 0} \sup_{x \in \Omega} h(x,m) \leq \sup_{x \in \Omega} h(x,1) \leq g(1) < \infty$. So we get the following limit inequality as $m \to 0$, which holds uniformly in $x$:
    \[ \lim_{m \to 0} G_{Z,x}(m) > 0 \, . \]
    So there exists some $\epsilon > 0$, independent of $x$, such that $G_{Z,x}(\epsilon) > 0$. Furthermore, after defining a constant ${C = \inf_{x \in \Omega} h(x,\epsilon)}$, it is clear that
    \[ G_{Z,x}\left( \frac{1}{Z} e^{-\frac{2}{\sigma^2}C} + \epsilon \right) \leq \frac{1}{Z} e^{-\frac{2}{\sigma^2} h(x,\epsilon)} - \frac{1}{Z} e^{-\frac{2}{\sigma^2}C} - \epsilon \leq - \epsilon < 0 \, .\]
    Therefore by the intermediate value theorem and strict monotonicity of $G_{Z,x}$ there exists a unique ${m = m_Z(x) > 0}$ such that $G_{Z,x}(m_Z(x)) = 0$. Hence the first result follows. In order to show the regularity requirement that ${m_Z \in C^2\left(\Omega\right) \cap C^1\left(\bar{\Omega}\right)}$, we need to show
    \begin{enumerate}
        \item $m_Z \in C^2\left(\Omega\right)$.
        \item For any $x \in \partial \Omega$, $\lim_{y \to x, \, y \in \Omega} m_Z(y)$ exists.
        \item For any $x \in \partial \Omega$, $\lim_{y \to x, \, y \in \Omega} \nabla m_Z(y)$ exists.
    \end{enumerate}
    The assertion that $m_Z \in C^2(\Omega)$ follows from the implicit function theorem. For the implicit function theorem to hold we require that $G_{Z,x}(m)$ is a $C^2$ function with respect to $x$ and $m$ at $(x,m_Z(x))$ and that $G_{Z,x}'(m_Z(x)) \neq 0$. The first requirement is true from our assumption that $h \in C^2\left(\Omega \times (0,\infty)\right)$, the second requirement is true since
    \[ G_{Z,x}'(m) = - \frac{2}{\sigma^2 Z} \partial_m h(x,m) e^{- \frac{2}{\sigma^2} h(x,m)} - 1 \leq -1 < 0 \, . \]
    To prove that $\lim_{y \to x, \, y \in \Omega} m(y)$ exists for every $x \in \partial \Omega$ it is enough to show $m_Z$ is uniformly Lipschitz in $\Omega$. Since $m_Z \in C^2(\Omega)$ it is therefore enough to show $\| \nabla m_Z\|_{\infty} < \infty$. Note that $C, \epsilon$ defined above are independent of $x$, so we must have $ \epsilon \leq \|m_Z\|_{\infty} \leq \frac{1}{Z} e^{-\frac{2}{\sigma^2}C} + \epsilon < \infty$. Then by differentiating the implicit formula for $m_Z$ we get
    \begin{equation} \label{eq:xu_brs_implicit_grad}
        \nabla m_Z = \frac{- 2 m_Z \nabla_x h(x,m_Z)}{\sigma^2 + 2 m_Z \partial_m h(x,m_Z)} \, .
    \end{equation}
    But $\partial_m h \geq 0$ since $h$ is increasing, also $m_Z$ is uniformly bounded as seen above. Similarly, using $f$ from assumption~\ref{a:brs2} we find $\|\nabla_x h(\cdot,m_Z(\cdot))\|_{\infty} < \infty$, hence $\|\nabla m_Z\|_{\infty} < \infty$.
    
    To prove the final assertion that $\lim_{y \to x, \, y \in \Omega} \nabla m_Z(y)$ exists for any $x \in \partial \Omega$, we note the formula for $\nabla m_Z$ is given by~\eqref{eq:xu_brs_implicit_grad}. Since $m_Z \in C^0\left(\bar{\Omega}\right)$, $\|m_Z\|_{\infty} < \infty$, $\nabla_x h, \partial_m h \in C^0\left(\bar{\Omega} \times (0,\infty)\right)$ and $\sigma^2 + 2 m_Z \partial_m h(x,m_Z) \geq \sigma^2$, then the right hand side of~\eqref{eq:xu_brs_implicit_grad} has limit as $y \to x$ for every $x \in \partial \Omega$. Hence $\nabla m_Z$ does as well.
\end{proof}

\begin{definition}
Let $m_Z$ be given by~\eqref{eq:xu_brs_mz}. Then we define the following function $\Phi:(0, \infty) \to (0,\infty)$
    \[ \Phi(Z) = \int_{\Omega} m_Z~dx \, . \]
\end{definition}

\begin{lemma} \label{lm:xu_brs_2}
    There exists $\bar{Z}, \underaccent{\bar}{Z}$ such that $\Phi\left(\bar{Z}\right) \geq 1$ and $\Phi\left(\underaccent{\bar}{Z}\right) \leq 1$.
\end{lemma}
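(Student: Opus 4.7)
The plan is to exploit the strict monotonicity of $G_{Z,x}$ established in Lemma~\ref{lm:xu_brs_1} to reduce the two inequalities $\Phi(\bar Z)\geq 1$ and $\Phi(\underaccent{\bar}{Z})\leq 1$ to a uniform sign check on $G_{Z,x}(c)$ at the single value $c=1/|\Omega|$. Since $G_{Z,x}$ is strictly decreasing in $m$ and vanishes precisely at $m=m_Z(x)$, one has $m_Z(x)\geq 1/|\Omega|$ whenever $G_{Z,x}(1/|\Omega|)\geq 0$, and $m_Z(x)\leq 1/|\Omega|$ whenever $G_{Z,x}(1/|\Omega|)\leq 0$. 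Integrating either pointwise inequality over $\Omega$ immediately yields the corresponding inequality for $\Phi$.

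The key input to make the sign check uniform in $x$ is assumption~\ref{a:hbound}: evaluating it at $m=1/|\Omega|$ gives
\[
\bigl|h(x,1/|\Omega|)\bigr|\leq g\bigl(1/|\Omega|\bigr)\ =:\ M\qquad\text{for every }x\in\Omega.
\]
Hence, uniformly in $x$,
\[
G_{Z,x}\bigl(1/|\Omega|\bigr)\ \geq\ \frac{1}{Z}\,e^{-\frac{2M}{\sigma^2}}-\frac{1}{|\Omega|},\qquad
G_{Z,x}\bigl(1/|\Omega|\bigr)\ \leq\ \frac{1}{Z}\,e^{\frac{2M}{\sigma^2}}-\frac{1}{|\Omega|}.
\]
I would then set $\bar Z=|\Omega|\,e^{-2M/\sigma^2}$, which forces the first expression to be nonnegative and therefore $m_{\bar Z}\geq 1/|\Omega|$ on $\Omega$, giving $\Phi(\bar Z)\geq 1$. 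Dually, I would set $\underaccent{\bar}{Z}=|\Omega|\,e^{2M/\sigma^2}$, which forces the second expression to be nonpositive and therefore $m_{\underaccent{\bar}{Z}}\leq 1/|\Omega|$ on $\Omega$, giving $\Phi(\underaccent{\bar}{Z})\leq 1$.

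There is no real obstacle; the proof is essentially a one-line calibration using the $L^\infty$-bound on $h(\cdot,1/|\Omega|)$. The only point that requires attention is that the comparison $m_Z(x)\lessgtr 1/|\Omega|$ must hold for \emph{every} $x$ before integrating, which is exactly what the uniform-in-$x$ bound from~\ref{a:hbound} supplies. Note that choosing the test value $1/|\Omega|$ is natural here because it is the constant density with total mass one, so the desired normalisation $\int_\Omega m\,dx=1$ straddles the two bounds and the subsequent application of an intermediate value argument to $\Phi$ will produce a genuine solution of~\eqref{eq:xu_brs_norm}.
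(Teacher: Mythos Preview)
Your proof is correct and follows essentially the same approach as the paper: both evaluate $G_{Z,x}$ at the constant density $1/|\Omega|$, use a uniform-in-$x$ bound on $h(\cdot,1/|\Omega|)$ to force the sign of $G_{Z,x}(1/|\Omega|)$, and then invoke the strict monotonicity of $G_{Z,x}$ to compare $m_Z$ with $1/|\Omega|$ pointwise before integrating. The only cosmetic difference is that the paper uses the sharper constants $C_1=\inf_x h(x,1/|\Omega|)$ and $C_2=\sup_x h(x,1/|\Omega|)$ in place of your cruder bound $M=g(1/|\Omega|)$ from~\ref{a:hbound}, but this merely yields slightly tighter values of $\bar Z,\underaccent{\bar}{Z}$ and does not change the argument.
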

\begin{proof}
    Take $C_1 = \inf_{x \in \Omega} h \left( x,\frac{1}{|\Omega|} \right)$. So $C_1 \in \left[- g \left( \frac{1}{\Omega} \right),g \left( \frac{1}{\Omega} \right) \right]$. Then, with $\underaccent{\bar}{Z} = |\Omega| e^{- \frac{2}{\sigma^2} C_1} \in (0,\infty)$, we have
    \[ G_{\underaccent{\bar}{Z},x} \left( \frac{1}{|\Omega|} \right) \leq 0 \, \text{ for every} \, x \in \Omega \, . \]
    Hence, $m_{\underaccent{\bar}{Z}}(x) \leq \frac{1}{|\Omega|}$ because $G_{Z,x}$ is a strictly decreasing function. So 
    \[ \Phi\left(\underaccent{\bar}{Z}\right) \leq \|m_{\underaccent{\bar}{Z}}\|_{\infty} |\Omega| \leq 1 \, . \]
    We can similarly find $\bar{Z}$ by taking $C_2 = \sup_{x \in \Omega} h \left( x,\frac{1}{|\Omega|} \right)$. So $C_2 \in \left[- g \left( \frac{1}{\Omega} \right),g \left( \frac{1}{\Omega} \right) \right]$. Then, with $\bar{Z} = |\Omega| e^{- \frac{2}{\sigma^2} C_2} \in (0,\infty)$, we have
    \[ G_{\bar{Z},x} \left( \frac{1}{|\Omega|} \right) \geq 0 \, \text{ for every} \, x \in \Omega \, . \]
    Hence, $m_{\bar{Z}}(x) \geq \frac{1}{|\Omega|}$ because $G_{Z,x}$ is a strictly decreasing function. So 
    \[ \Phi\left(\bar{Z}\right) \geq \|m_{\bar{Z}}\|_{\infty} |\Omega| \geq 1 \, . \]
\end{proof}

\begin{lemma} \label{lm:xu_brs_3}
    There exists a unique $Z^* \in (0,\infty)$ such that $\Phi\left(Z^*\right) = 1$.
\end{lemma}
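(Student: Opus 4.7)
The plan is to combine Lemma \ref{lm:xu_brs_2} with the intermediate value theorem to produce $Z^*$, and then to establish uniqueness by showing that $\Phi$ is in fact strictly decreasing. So the two analytical ingredients I need are (i) continuity of $\Phi$ on $(0,\infty)$, and (ii) strict monotonicity of $m_Z(x)$ in $Z$ for each fixed $x$.

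For the monotonicity, I would differentiate the defining relation $Z m_Z(x) = e^{-\frac{2}{\sigma^2} h(x,m_Z(x))}$ implicitly with respect to $Z$. The implicit function theorem applies here for the same reason it did in the proof of Lemma \ref{lm:xu_brs_1}: the partial derivative of $G_{Z,x}$ in $m$ is bounded above by $-1$. A short computation yields
\[
\partial_Z m_Z(x) \;=\; \frac{-\,m_Z(x)/Z}{1 + \frac{2}{\sigma^2}\,m_Z(x)\,\partial_m h(x,m_Z(x))} \, ,
\]
which is strictly negative since $m_Z>0$ and $\partial_m h \geq 0$ by assumption \ref{a:hincrease}. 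Hence $Z \mapsto m_Z(x)$ is strictly decreasing for every $x \in \Omega$.

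For the continuity of $\Phi$, I would fix $Z_0 \in (0,\infty)$ and any compact interval $[a,b] \subset (0,\infty)$ containing $Z_0$. By the monotonicity just established, $m_b(x) \leq m_Z(x) \leq m_a(x)$ for all $Z \in [a,b]$ and all $x \in \Omega$, and by Lemma \ref{lm:xu_brs_1} each of $m_a, m_b$ belongs to $C^1(\bar\Omega)$ and is therefore bounded on $\Omega$. Given any sequence $Z_n \to Z_0$, pointwise convergence $m_{Z_n}(x) \to m_{Z_0}(x)$ follows from uniqueness of the solution to $G_{Z_n,x}(m) = 0$ together with the continuous dependence furnished by the implicit function theorem. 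The dominated convergence theorem (with dominant $m_a \in L^1(\Omega)$, using boundedness of $\Omega$) then yields $\Phi(Z_n) \to \Phi(Z_0)$.

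With these in hand, the conclusion is immediate. By Lemma \ref{lm:xu_brs_2} there exist $\bar Z \leq \underaccent{\bar}{Z}$ (the ordering coming from $C_1 \leq C_2$) with $\Phi(\bar Z) \geq 1 \geq \Phi(\underaccent{\bar}{Z})$, so continuity of $\Phi$ and the intermediate value theorem produce some $Z^* \in [\bar Z, \underaccent{\bar}{Z}]$ with $\Phi(Z^*)=1$. Uniqueness follows because $\Phi$ is the integral over $\Omega$ of the pointwise strictly decreasing family $Z \mapsto m_Z(x)$, so $\Phi$ itself is strictly decreasing on $(0,\infty)$, and therefore attains the value $1$ at most once. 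I expect the only mildly delicate step to be the continuity of $\Phi$, since one must rule out any pathological blow-up of $m_Z$ as $Z$ varies; however the sandwich estimate $m_b \leq m_Z \leq m_a$ obtained from monotonicity handles this cleanly and sidesteps the need for a more quantitative uniform bound.
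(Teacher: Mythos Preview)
Your proposal is correct and follows the same overall strategy as the paper: establish that $\Phi$ is continuous and strictly decreasing, then invoke Lemma~\ref{lm:xu_brs_2} and the intermediate value theorem. The technical details differ in two minor but noteworthy ways. For monotonicity, you differentiate the implicit relation in $Z$ to obtain $\partial_Z m_Z < 0$, whereas the paper argues directly: if $Z_1 < Z_2$ then $G_{Z_2,x}(m_{Z_1}(x)) < G_{Z_1,x}(m_{Z_1}(x)) = 0$, and strict monotonicity of $G_{Z,x}$ in $m$ forces $m_{Z_2}(x) < m_{Z_1}(x)$. For continuity, you use dominated convergence with the sandwich $m_b \leq m_Z \leq m_a$, whereas the paper derives a quantitative local Lipschitz bound $|\Phi(Z) - \Phi(Z')| \leq \frac{\Phi(\epsilon)}{\epsilon}|Z - Z'|$ directly from the implicit equation. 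The paper's arguments are slightly more elementary in that they avoid differentiating in $Z$ altogether (and would work even if $h$ were merely continuous and increasing in $m$, not $C^2$), while your route leans on the smoothness already available from assumption~\ref{a:brs1}. Both are perfectly valid here.
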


\begin{proof}
    If $\Phi$ is continuous and strictly decreasing the intermediate value theorem and the Lemma~\ref{lm:xu_brs_2} give the result. We start by proving that $\Phi$ is strictly decreasing. First note that if $m_Z(x)$ is strictly decreasing in $Z$ for every $x$ then $\Phi$ must be strictly decreasing because $m_Z$ is continuous with respect to $x$. Take $Z_1 < Z_2$. Then $m_{Z_1}(x)$ satisfies
    \[ \frac{1}{Z_1} e^{-\frac{2}{\sigma^2}h(x,m_{Z_1}(x))} - m_{Z_1}(x) = 0 \, . \]
    So
    \[ \frac{1}{Z_2} e^{-\frac{2}{\sigma^2}h(x,m_{Z_1}(x))} - m_{Z_1}(x) < \frac{1}{Z_1} e^{-\frac{2}{\sigma^2}h(x,m_{Z_1}(x))} - m_{Z_1}(x) = 0 \, . \]
Hence $G_{Z_2,x}(m_{Z_1}(x)) < 0$. Then $m_{Z_2}(x) < m_{Z_1}(x)$ for all $x \in \Omega$ since $G_{Z,x}$ is a strictly decreasing function. 
To show that $\Phi$ is continuous at $Z \in (0,\infty)$, take $\epsilon < Z' < Z$. Then
    \[ \begin{aligned}
        |\Phi(Z) - \Phi(Z')| & = \Phi(Z') - \Phi(Z) = \int_{\Omega} m_{Z'}(x)~dx - \Phi(Z) \\
        & = \frac{Z}{Z'} \int_{\Omega} \frac{1}{Z} e^{- h(x,m_{Z'}(x))}~dx - \Phi(Z) \\
        & \leq \frac{Z}{Z'} \int_{\Omega} \frac{1}{Z} e^{- h(x,m_Z(x))}~dx - \Phi(Z) \leq \frac{\Phi(Z)}{\epsilon} (Z - Z') \leq \frac{\Phi(\epsilon)}{\epsilon} (Z - Z') \, .
    \end{aligned} \]
By exchanging $Z$ and $Z'$ we can similarly show the analogous result for $\epsilon < Z < Z'$, therefore $\Phi$ is locally Lipschitz and hence continuous.
\end{proof}
\begin{theorem}
    There exists a unique solution $m: \Omega \to (0,\infty)$ to the stationary BRS~\eqref{eq:brssystem}.
\end{theorem}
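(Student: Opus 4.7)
The plan is to assemble existence directly from Lemmas~\ref{lm:xu_brs_1} and~\ref{lm:xu_brs_3}, and then prove uniqueness by a standard entropy-type identity that forces every classical solution into the one-parameter family $\{m_Z\}_Z$. For existence I would set $Z^*$ equal to the unique value given by Lemma~\ref{lm:xu_brs_3} and define $m^\star := m_{Z^*}$. The normalisation~\eqref{eq:xu_brs_norm} then holds by construction since $\Phi(Z^*)=1$, and Lemma~\ref{lm:xu_brs_1} supplies the required $C^2(\Omega)\cap C^1(\bar\Omega)$ regularity. To verify the PDE~\eqref{eq:xu_brs} and the Neumann condition~\eqref{eq:xu_brs_bc}, I would take logarithms in the implicit relation~\eqref{eq:xu_brs_mz} to obtain
\[ \tfrac{\sigma^2}{2}\log m^\star + h(x,m^\star) \;=\; -\tfrac{\sigma^2}{2}\log Z^*, \]
a constant on $\bar\Omega$. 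Differentiating (which is legitimate by the $C^1$ regularity up to the boundary) and multiplying through by $m^\star$ yields the pointwise flux identity $\tfrac{\sigma^2}{2}\nabla m^\star + m^\star\nabla[h(x,m^\star)] = 0$ on all of $\bar\Omega$. Taking the divergence gives~\eqref{eq:xu_brs}, while~\eqref{eq:xu_brs_bc} is immediate since the flux vanishes pointwise rather than only in the normal direction.

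For uniqueness, suppose $m$ is any classical solution of~\eqref{eq:brssystem}. I would introduce the potential $\phi := \tfrac{\sigma^2}{2}\log m + h(x,m)$, in terms of which the equation~\eqref{eq:xu_brs} takes the compact form $\nabla\cdot(m\nabla\phi)=0$ in $\Omega$ and the boundary condition~\eqref{eq:xu_brs_bc} becomes $m\nabla\phi\cdot\nu=0$ on $\partial\Omega$. Testing against $\phi$ and integrating by parts — the boundary term disappears by~\eqref{eq:xu_brs_bc} — gives
\[ \int_\Omega m\,|\nabla\phi|^2\,dx \;=\; 0. \]
Since $m>0$ in $\Omega$ by definition of a classical solution, this forces $\nabla\phi\equiv 0$, so $\phi$ equals some constant $c$. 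Rearranging yields the implicit relation $m(x) = \tfrac{1}{Z}e^{-\tfrac{2}{\sigma^2}h(x,m(x))}$ with $Z=e^{-2c/\sigma^2}$, and Lemma~\ref{lm:xu_brs_1} then identifies $m=m_Z$. The constraint~\eqref{eq:xu_brs_norm} reads $\Phi(Z)=1$, and Lemma~\ref{lm:xu_brs_3} pins down $Z=Z^*$, giving $m\equiv m^\star$.

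The main obstacle I expect is technical rather than structural, namely justifying the integration by parts rigorously. Because $\log m$ could a priori blow up if $m$ were to touch zero on $\partial\Omega$, one needs either a strict positivity bound on $m$ up to the boundary (which is plausible from a strong maximum principle applied to~\eqref{eq:xu_brs} in non-divergence form, using that $\partial_m h \geq 0$) or a regularisation argument testing with $\phi_\eta := \tfrac{\sigma^2}{2}\log(m+\eta)+h(x,m)$ and passing $\eta\to 0^+$. With that caveat dispatched, the whole proof reduces to the one-line rigidity identity $\int_\Omega m|\nabla\phi|^2\,dx=0$, which is precisely what makes the family $\{m_Z\}$ exhaustive.
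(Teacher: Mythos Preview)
Your proposal is correct and follows essentially the same route as the paper: establish existence via $m_{Z^*}$ from Lemmas~\ref{lm:xu_brs_1} and~\ref{lm:xu_brs_3}, then prove uniqueness by an entropy-dissipation identity that forces any classical solution into the family $\{m_Z\}$. The only difference is cosmetic but worth noting: where you test with $\phi=\tfrac{\sigma^2}{2}\log m+h(x,m)$ and obtain $\int_\Omega m|\nabla\phi|^2\,dx=0$, the paper instead sets $\psi(m)=e^{-\frac{2}{\sigma^2}h(x,m)}$, rewrites the equation as $\nabla\cdot\bigl(\psi(m)\nabla(m/\psi(m))\bigr)=0$, and tests with $m/\psi(m)=m\,e^{\frac{2}{\sigma^2}h}$ to get $\int_\Omega \psi(m)\bigl|\nabla(m/\psi(m))\bigr|^2\,dx=0$. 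Since $m/\psi(m)=\exp\bigl(\tfrac{2}{\sigma^2}\phi\bigr)$, the two identities are equivalent, but the paper's choice of test function avoids the $\log m$ you flagged as a potential obstacle: $m\,e^{\frac{2}{\sigma^2}h}$ and its gradient remain controlled on $\bar\Omega$ directly from $m\in C^1(\bar\Omega)$ and $h\in C^1(\bar\Omega\times(0,\infty))$, without needing a separate positivity argument or regularisation.
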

\begin{proof}
    Take $m(x) = m_{Z^*}(x)$, with $Z^*$ defined as in Lemma~\ref{lm:xu_brs_3}. Then from Lemmas~\ref{lm:xu_brs_1} and~\ref{lm:xu_brs_3} we have shown there exists a unique $m: \Omega \to (0,\infty)$ satisfying
    \begin{subequations} \label{eq:xu_brs_sys1}
        \begin{align}
            & m \in C^2\left(\Omega\right) \cap C^1\left(\bar{\Omega}\right) \\
            & \text{There exists } Z \in (0,\infty) \text{ such that } m = \frac{1}{Z} e^{- \frac{2}{\sigma^2}h(x,m)} \, , \quad x \in \Omega \\
            & \int_{\Omega} m~dx = 1 \, .
        \end{align}
    \end{subequations}
    Now for any $m:\Omega \to (0,\infty)$ we can define $\phi(m)$ by $\phi(m) = e^{- \frac{2}{\sigma^2} h(x,m)}$. Suppose $m$ is a solution to~\eqref{eq:brssystem}, then $\frac{m}{\phi(m)} = m e^{\frac{2}{\sigma^2} h(x,m)}$ and so $\frac{m}{\phi(m)} \in H^1\left(\Omega\right)$ because $m \in C^1\left(\bar{\Omega}\right)$, $h \in C^1\left(\bar{\Omega} \times (0,\infty)\right)$ and $h$ is increasing in $m$. Therefore a solution to~\eqref{eq:brssystem} is equivalent to a solution of
    \begin{subequations} \label{eq:xu_brs_sys2}
        \begin{align}
            m \in C^2\left(\Omega\right) \cap C^1\left(\bar{\Omega}\right)& \label{eq:xu_brs_sys2_reg1} \\
            \frac{m}{\phi(m)} \in H^1(\Omega)& \label{eq:xu_brs_sys2_reg2}\\
            \nabla \cdot \left(\phi(m) \nabla \left( \frac{m}{\phi(m)} \right)\right) &= 0 \, , \quad x \in \Omega \label{eq:xu_brs_sys2_pde}\\
            \phi(m) \nabla \left( \frac{m}{\phi(m)} \right) \cdot \nu &= 0 \, , \quad x \in \partial \Omega \label{eq:xu_brs_sys2_bc}\\
            \int_{\Omega} m~dx &= 1 \, .
        \end{align}
    \end{subequations}
    Now if we multiply~\eqref{eq:xu_brs_sys2_pde} by $\frac{m}{\phi(m)}$, and integrate over $\Omega$, then using Green's formula and the boundary condition~\eqref{eq:xu_brs_sys2_bc} we get
    \[ 0 = \int_{\Omega} \frac{m}{\phi(m)} \nabla \cdot \left(\phi(m) \nabla \left(\frac{m}{\phi(m)} \right)\right)~dx = - \int_{\Omega} \phi(m) \left| \nabla \left(\frac{m}{\phi(m)} \right) \right|^2~dx \, . \]
    But $\phi(m) > 0$ and $\left| \nabla \left(\frac{m}{\phi(m)} \right) \right|^2 \geq 0$ for every $x \in \Omega$. Hence this is only true if $\nabla \left(\frac{m}{\phi(m)} \right) = 0$ for every $x \in \Omega$, i.e. if there exists $Z \in (0,\infty)$ such that $m = \frac{1}{Z}\phi(m)$. Conversely, if $m \in C^2\left(\Omega\right) \cap C^1\left(\bar{\Omega}\right)$ and there exists $Z \in (0,\infty)$ such that $m = \frac{1}{Z}\phi(m)$, then $m$ satisfies~\eqref{eq:xu_brs_sys2_reg1}--\eqref{eq:xu_brs_sys2_bc}. So a solution of~\eqref{eq:xu_brs_sys2} is equivalent to a solution of~\eqref{eq:xu_brs_sys1}. Therefore the systems~\eqref{eq:brssystem} and~\eqref{eq:xu_brs_sys1} are equivalent. Hence we have shown existence and uniqueness of solutions to~\eqref{eq:xu_brs_sys1} by proving existence and uniqueness of solutions to~\eqref{eq:brssystem}.
\end{proof}

\subsection{Mean Field Games}
Next we discuss existence and uniqueness of classical solutions to~\eqref{eq:statmfg}, which is defined as follows.
\begin{definition}
    The stationary MFG boundary value problem is to find $m:\Omega \to (0,\infty)$, $u:\Omega \to \R$ and $\lambda \in \R$ satisfying the following PDE system
    \begin{subequations} \label{eq:xu_mfg}
    \begin{align}
        m \in C^2\left(\Omega\right) \cap C^1\left(\bar{\Omega}\right)& \label{eq:xu_mfg_mreg} \\
        u \in C^2\left(\Omega\right) \cap C^1\left(\bar{\Omega}\right)& \label{eq:xu_mfg_ureg}\\
        - \frac{\sigma^2}{2} \nabla^2 m - \nabla \cdot (m \nabla u) &= 0 \, , \quad x \in \Omega \label{eq:xu_mfg_pde1} \\
        - \frac{\sigma^2}{2} \nabla^2 u + \frac{| \nabla u |^2}{2} - h(x,m) + \lambda &= 0 \, , \quad x \in \Omega \label{eq:xu_mfg_pde2}\\
            - \frac{\sigma^2}{2} \nabla m \cdot \nu &= 0 \, , \quad x \in \partial \Omega \label{eq:xu_mfg_bc1} \\
            - \nabla u \cdot \nu &= 0 \, , \quad x \in \partial \Omega \label{eq:xu_mfg_bc2} \\
            \int_{\Omega} m~dx &= 1, \label{eq:xu_mfg_ic1}\\
            \int_{\Omega} u~dx &= 0 \, . \label{eq:xu_mfg_ic2}
    \end{align}
\end{subequations}
\end{definition}

\begin{remark} \label{remark:xu_mfg_msol}
    Here, following the method of Section~\ref{section:xu_brs}, we note that for any $u \in C^2\left(\Omega\right) \cap C^1\left(\bar{\Omega}\right)$, a solution $m$ of~\eqref{eq:xu_mfg_mreg},~\eqref{eq:xu_mfg_pde1},~\eqref{eq:xu_mfg_bc1},~\eqref{eq:xu_mfg_ic1} is equivalent to a solution of
    \begin{subequations} \label{eq:xu_mfg_mvar}
        \begin{align}
            &m \in C^2\left(\Omega\right) \cap C^1\left(\bar{\Omega}\right)\\
            m &= \frac{1}{Z} e^{- \frac{2}{\sigma^2} u} \, , \quad x \in \Omega \\
            Z &= \int_{\Omega} e^{- \frac{2}{\sigma^2} u}~dx \, . \label{eq:xu_mfg_mvar_ic}
        \end{align}
    \end{subequations}
    Then by the arguments in Section~\ref{section:xu_brs}, a unique $m$ satisfying~\eqref{eq:xu_mfg_mvar} exists and is the unique solution to~\eqref{eq:xu_mfg_mreg},~\eqref{eq:xu_mfg_pde1},~\eqref{eq:xu_mfg_bc1},~\eqref{eq:xu_mfg_ic1}. So from now we only consider that solution.
\end{remark}

\begin{proposition} \label{proposition:xu_mfg_transform}
    There exists a unique solution $(m,u,\lambda) \in \left[ C^2\left(\Omega\right) \cap C^1\left(\bar{\Omega}\right) \right] \times \left[ C^2\left(\Omega\right) \cap C^1\left(\bar{\Omega}\right) \right] \times \R$ to the stationary MFG boundary value problem if and only if there exists a unique solution $(u,\lambda,Z) \in \left[ C^2\left(\Omega\right) \cap C^1\left(\bar{\Omega}\right) \right] \times \R \times (0,\infty)$ to
    \begin{subequations}\label{eq:xu_mfg_sys}
    \begin{align}
        u \in C^2\left(\Omega\right) \cap C^1\left(\bar{\Omega}\right) \label{eq:xu_mfg_bvp1}&\\
        - \frac{\sigma^2}{2} \nabla^2 u + \frac{| \nabla u |^2}{2} - h\left( x,\frac{1}{Z} e^{- \frac{2}{\sigma^2} u} \right) + \lambda &= 0 \, , \quad x \in \Omega \label{eq:xu_mfg_bvp2} \\
        - \nabla u \cdot \nu &= 0 \, , \quad x \in \partial \Omega \label{eq:xu_mfg_bvp3} \\ 
        \int_{\Omega} \frac{1}{Z} e^{- \frac{2}{\sigma^2} u}~dx &= 1, \label{eq:xu_mfg_bvp4} \\
        \int_{\Omega} u~dx &= 0 \label{eq:xu_mfg_bvp5} \, .
    \end{align}
    \end{subequations}
\end{proposition}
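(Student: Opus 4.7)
The plan is to establish the biconditional by constructing a bijection between the solution sets of the two systems via the Hopf--Cole style substitution $m = \frac{1}{Z} e^{-\frac{2}{\sigma^2} u}$, which eliminates $m$ from \eqref{eq:xu_mfg} entirely. Once both directions of the correspondence are shown to preserve the solution property, the statement about existence and uniqueness transfers immediately.

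For the forward direction, I start with a solution $(m,u,\lambda)$ of \eqref{eq:xu_mfg}. Remark \ref{remark:xu_mfg_msol} applied to $u$ produces the unique positive density satisfying \eqref{eq:xu_mfg_mreg}, \eqref{eq:xu_mfg_pde1}, \eqref{eq:xu_mfg_bc1}, \eqref{eq:xu_mfg_ic1}, which must therefore coincide with the given $m$; this identifies $Z = \int_\Omega e^{-\frac{2}{\sigma^2} u}\, dx$ so that $m = \frac{1}{Z} e^{-\frac{2}{\sigma^2} u}$. Plugging this formula into the HJB equation \eqref{eq:xu_mfg_pde2} yields \eqref{eq:xu_mfg_bvp2}, while \eqref{eq:xu_mfg_bvp1}, \eqref{eq:xu_mfg_bvp3}, \eqref{eq:xu_mfg_bvp4}, \eqref{eq:xu_mfg_bvp5} are restatements of \eqref{eq:xu_mfg_ureg}, \eqref{eq:xu_mfg_bc2}, \eqref{eq:xu_mfg_ic1}, \eqref{eq:xu_mfg_ic2} under this substitution.

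For the converse, given a solution $(u,\lambda,Z)$ of \eqref{eq:xu_mfg_sys}, I define $m := \frac{1}{Z} e^{-\frac{2}{\sigma^2} u}$ and check that $(m,u,\lambda)$ solves \eqref{eq:xu_mfg}. Regularity of $m$ is inherited from that of $u$; a direct differentiation gives $\nabla m = -\frac{2}{\sigma^2} m \nabla u$, so the flux $-\frac{\sigma^2}{2}\nabla m - m \nabla u$ vanishes pointwise in $\bar\Omega$. Its divergence gives the Fokker--Planck equation \eqref{eq:xu_mfg_pde1}, and taking the normal component together with the Neumann condition \eqref{eq:xu_mfg_bvp3} gives \eqref{eq:xu_mfg_bc1}. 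Finally, \eqref{eq:xu_mfg_ic1}, \eqref{eq:xu_mfg_ic2} and \eqref{eq:xu_mfg_pde2} are immediate consequences of \eqref{eq:xu_mfg_bvp4}, \eqref{eq:xu_mfg_bvp5} and \eqref{eq:xu_mfg_bvp2} respectively.

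The two constructions are mutual inverses and each sends solutions to solutions, so the induced map on solution sets is a bijection; hence existence and uniqueness of one system are equivalent to existence and uniqueness of the other. I do not expect any genuine obstacle here, since the argument is essentially one of formal substitution. The non-trivial analytic content — that the Neumann stationary Fokker--Planck equation with drift $\nabla u$ admits $\frac{1}{Z} e^{-\frac{2}{\sigma^2} u}$ as its unique normalised positive solution — is already packaged inside Remark \ref{remark:xu_mfg_msol}, so invoking it is the only subtle point.
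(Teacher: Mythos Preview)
Your proposal is correct and follows essentially the same approach as the paper: both establish the correspondence via the substitution $m=\frac{1}{Z}e^{-\frac{2}{\sigma^2}u}$, invoking Remark~\ref{remark:xu_mfg_msol} for the key fact that this is the unique normalised positive solution of the Fokker--Planck sub-problem. Your phrasing in terms of a bijection between solution sets is a clean packaging of what the paper does by explicitly checking that a putative second solution of one system would yield a second solution of the other.
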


\begin{proof}
    First assume a unique solution $(m,u,\lambda)$ to~\eqref{eq:xu_mfg} exists, then thanks to remark~\ref{remark:xu_mfg_msol}, we have $m = \frac{1}{Z} e^{-\frac{2}{\sigma^2}u}$, for $Z$ satisfying~\eqref{eq:xu_mfg_mvar_ic}. Then the triple $(u,\lambda,Z)$ is clearly a solution to~\eqref{eq:xu_mfg_sys}. Furthermore, suppose another solution $(u',\lambda',Z')$ to~\eqref{eq:xu_mfg_sys} exists. Then $(m',u',\lambda')$, with $m' = \frac{1}{Z} e^{- \frac{2}{\sigma^2} u'}$, is a solution to~\eqref{eq:xu_mfg}. But since we assumed such solutions are unique, we have $(m',u',\lambda') = (m,u,\lambda)$ and hence $(u',\lambda',Z') = (u,\lambda,Z)$, so the solution to~\eqref{eq:xu_mfg_sys} is unique.\\
    Next we assume that a unique solution $(u,\lambda,Z)$ to~\eqref{eq:xu_mfg_sys} exists. Then, defining $m = \frac{1}{Z} e^{- \frac{2}{\sigma^2} u}$, $(m,u,\lambda)$ is a solution to~\eqref{eq:xu_mfg}. Now suppose $(m',u',\lambda')$ is another solution then (again using remark~\ref{remark:xu_mfg_msol}) $m' = \frac{1}{Z'} e^{- \frac{2}{\sigma^2} u'}$, where $Z'$ satisfies~\eqref{eq:xu_mfg_mvar_ic}. So $(u',\lambda',Z')$ satisfies~\eqref{eq:xu_mfg_sys}. By uniqueness $(u',\lambda',Z') = (u,\lambda,Z)$ and so $(m,u,\lambda)$ is also unique.
\end{proof}

\begin{theorem} \label{thm:xu_mfg1}
    There exists a unique solution $(m,u,\lambda)$ of the MFG system~\eqref{eq:xu_mfg}.
\end{theorem}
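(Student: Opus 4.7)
By Proposition \ref{proposition:xu_mfg_transform}, it suffices to find a unique triple $(u,\lambda,Z)$ solving \eqref{eq:xu_mfg_sys}. My main tool is the Hopf--Cole transformation $v = e^{-u/\sigma^2}$, which is strictly positive on $\bar\Omega$; a direct computation gives
\[
    -\frac{\sigma^2}{2}\nabla^2 u + \frac{|\nabla u|^2}{2} \;=\; \frac{\sigma^4}{2}\,\frac{\nabla^2 v}{v},
\]
so \eqref{eq:xu_mfg_bvp2}--\eqref{eq:xu_mfg_bvp3} turn into the semilinear Neumann problem
\[
    -\frac{\sigma^4}{2}\nabla^2 v + h\!\left(x,\tfrac{v^2}{Z}\right) v \;=\; \lambda v \text{ in } \Omega, \qquad \nabla v \cdot \nu = 0 \text{ on } \partial\Omega,
\]
coupled to the integral constraints $\int_\Omega v^2 \, dx = Z$ and $\int_\Omega \log v \, dx = 0$. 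This linearises the Hamilton--Jacobi term in the same spirit as the substitution $m = \phi(m)/Z$ linearised the BRS equation in Section \ref{section:xu_brs}.

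For existence I would proceed in a manner parallel to that section. Treating $\lambda$ as a spectral parameter, I would show, for each fixed $Z\in(0,\infty)$, that the above semilinear PDE admits a unique positive classical solution $v_{\lambda,Z}$; under the strengthened hypothesis that $h(x,\cdot)$ is strictly increasing, assumptions \ref{a:mfg_mto0}--\ref{a:mfg_mtoinf} supply ordered constant sub- and super-solutions, a monotone iteration combined with the maximum principle yields existence, and Schauder estimates based on the $C^{2,\alpha}$ boundary regularity from \ref{a:omega} deliver the required $C^2(\Omega)\cap C^1(\bar\Omega)$ regularity. The two integral constraints are then enforced by two nested intermediate-value steps: since larger $\lambda$ lowers $v_{\lambda,Z}$ pointwise by comparison, the map $\lambda \mapsto \int_\Omega v_{\lambda,Z}^2\,dx$ is continuous and strictly monotone and hence attains the prescribed value $Z$ at a unique $\lambda = \lambda(Z)$; a second IVT in $Z$ then resolves $\int_\Omega u\,dx = 0$. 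Assumptions \ref{a:mfg_mto0}--\ref{a:mfg_mtoinf} provide exactly the pointwise brackets $m_1 \le 1/|\Omega| \le m_2$ that each IVT step requires, mirroring Lemmas \ref{lm:xu_brs_2}--\ref{lm:xu_brs_3}.

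Since \ref{a:hincrease} gives only monotonicity (not strict), I close that gap by the perturbation device flagged in the introduction: replace $h$ by $h_\varepsilon(x,m) = h(x,m) + \varepsilon \log m$, which is strictly increasing and still satisfies $\varepsilon$-uniform versions of \ref{a:hbound}--\ref{a:mfg_mtoinf}. Solve the $\varepsilon$-problem by the argument above, extract $\varepsilon$-uniform $C^{2,\alpha}$ estimates from the sub/super-solution bounds together with Schauder theory, and pass to the limit $\varepsilon \downarrow 0$. Uniqueness then follows from a Lasry--Lions-type monotonicity computation: for two solutions $(m_i,u_i,\lambda_i)$, test the subtracted Fokker--Planck equation with $u_1-u_2$ and the subtracted HJB with $m_1-m_2$, integrate by parts (the Neumann boundary terms vanish) and add; convexity of $p \mapsto |p|^2/2$ yields a non-negative sum of terms of the form $m_i|\nabla(u_1-u_2)|^2$ balancing $\int_\Omega \bigl(h(x,m_1)-h(x,m_2)\bigr)(m_1-m_2)\,dx$, forcing $m_1 = m_2$ by strict monotonicity (preserved for $h_\varepsilon$ and transferred to the limit by the same perturbation), whence $u_1 = u_2$ via $\int u = 0$ and $\lambda_1 = \lambda_2$ by integrating the HJB.

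The main obstacle I anticipate is the coupled two-parameter IVT step: unlike the BRS case, where a single scalar $Z$ had to be tuned, two scalar constraints are in play here and one must verify that the relevant maps are jointly continuous and correctly oriented after the Hopf--Cole substitution, so that the constraints can be resolved simultaneously without conflict. A secondary difficulty is ensuring that the $h \to h_\varepsilon$ perturbation preserves the quantitative sub/super-solution bounds uniformly in $\varepsilon$, so that the limit delivers a genuine classical solution of the original problem rather than merely a weak one.
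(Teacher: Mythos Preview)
Your proposal is broadly sound and follows a genuinely different route from the paper. The paper works directly with $u$: it invokes a sub/super-solution result of Schmitt to solve \eqref{eq:xu_mfg_bvp1}--\eqref{eq:xu_mfg_bvp3} for each $(\lambda,Z)$, proves uniqueness and monotonicity of $u_{\lambda,Z}$ via the strong maximum principle and Hopf's lemma, then runs two nested intermediate-value arguments in the order \emph{first} $\int u=0$ to fix $\lambda(Z)$, \emph{then} the mass constraint to fix $Z^*$. For non-strictly increasing $h$ the paper perturbs to $h_\varepsilon=h+\varepsilon\log(|\Omega|m)$, but proves uniqueness for the limiting problem by a separate maximum-principle argument rather than by ``transferring'' it from the $\varepsilon$-problems. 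Your Hopf--Cole substitution $v=e^{-u/\sigma^2}$ removes the quadratic gradient term, which makes the sub/super-solution step cleaner; your Lasry--Lions uniqueness argument is also legitimate and in fact does \emph{not} need strict monotonicity of $h$ at all, since strict convexity of $p\mapsto|p|^2/2$ already forces $\nabla u_1=\nabla u_2$, whence $u_1=u_2$ by \eqref{eq:xu_mfg_bvp5} and then $m_1=m_2$ via Remark~\ref{remark:xu_mfg_msol} --- so your perturbation device is needed only for existence, which is a small simplification over the paper.

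Two points to correct. First, your monotonicity direction is reversed: since $u_{\lambda,Z}$ is \emph{decreasing} in $\lambda$ (Remark~\ref{rmk:XU_mfg_r2}) and $v=e^{-u/\sigma^2}$, larger $\lambda$ \emph{raises} $v_{\lambda,Z}$, not lowers it; the IVT still goes through, but with the opposite orientation. Second, because $u_{\lambda,Z_2}=u_{\lambda,Z_1}-\tfrac{\sigma^2}{2}\log(Z_2/Z_1)$ (cf.\ Lemma~\ref{lm:xu_mfg_monotone}), one has $v_{\lambda,Z}=(Z/Z_0)^{1/2}v_{\lambda,Z_0}$, so the condition $\int v_{\lambda,Z}^2=Z$ is actually \emph{independent of $Z$}; your first IVT therefore selects a single $\lambda^*$ outright, and the second IVT in $Z$ then fixes $\int\log v=0$ explicitly. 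This decoupling resolves the ``main obstacle'' you flagged about joint continuity of the two-parameter map: the constraints are not coupled at all in your ordering.
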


\begin{proof}[Proof (outline)]
    First note, as a result of Proposition~\ref{proposition:xu_mfg_transform}, we only need to prove existence and uniqueness of a solution to~\eqref{eq:xu_mfg_sys} and existence and uniqueness for the MFG system~\eqref{eq:xu_mfg} will follow. The proof is split into the following steps:
    \begin{enumerate}
        \item Show that for any pair of constants $(\lambda,Z)$ there exists a unique solution, denoted by $u_{\lambda,Z}$, to~\eqref{eq:xu_mfg_bvp1},~\eqref{eq:xu_mfg_bvp2}  (see Proposition \ref{prop:xu_mfg_cont})
        \item Show that for any constant $Z$ there exists a unique $\lambda = \lambda(Z)$ such that $u_{\lambda(Z),Z}$ satisfies~\eqref{eq:xu_mfg_bvp4} (see Proposition \ref{prop:xu_mfg_lambda})
        \item Show that there exists a unique $Z = Z^*$ such that $u_{\lambda(Z^*),Z^*}$ satisfies~\eqref{eq:xu_mfg_bvp5}.
    \end{enumerate}
    
    Then $\left(u_{\lambda\left(Z^*\right),Z^*},\lambda\left(Z^*\right),Z^*\right)$ is a solution to~\eqref{eq:xu_mfg_sys}. Uniqueness follows from uniqueness obtained at each step of the proof outlined. We prove step 1 using a variant of the method of upper and lower solutions in the spirit of~\cite{Schmitt1978}, so that it applies to our case of Neumann boundary conditions. We prove steps 2 and 3 by iteratively using the intermediate value theorem - in a similar manner to the proof of Lemma~\ref{lm:xu_brs_3}. Note that we will first do this for $h$ which is strictly increasing in $m$. Then by considering ${h_{\epsilon}(x,m) = h(x,m) + \epsilon \log\left(|\Omega| m\right)}$, and taking the limit as $\epsilon \to 0$ we will prove it in the more general setting when $h$ is increasing.
\end{proof}

\begin{lemma} \label{lm:xu_mfg_constbound}
    There exists $\Lambda_1, \Lambda_2 \in [- \infty, \infty]$ with $\Lambda_1 < \Lambda_2$ such that for every $\lambda \in \left( \Lambda_1, \Lambda_2 \right)$ and $Z > 0$, there exist two constants $\underaccent{\bar}{u}_{\lambda,Z} \leq 0 \leq \bar{u}_{\lambda,Z}$ satisfying 
    \begin{equation} \label{eq:xu_mfg_constbound}
        - h \left(x, \frac{1}{Z} e^{- \frac{2}{\sigma^2} \underaccent{\bar}{u}_{\lambda,Z}} \right) + \lambda \leq 0 \leq - h \left(x, \frac{1}{Z} e^{- \frac{2}{\sigma^2} \bar{u}_{\lambda,Z}} \right) + \lambda \, .
    \end{equation}
\end{lemma}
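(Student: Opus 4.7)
The plan is to exploit the fact that constant functions have zero Laplacian and zero gradient, so for any constant $u$, the only nontrivial part of the HJB operator $-\tfrac{\sigma^2}{2}\nabla^2 u + \tfrac{|\nabla u|^2}{2} - h(x,\tfrac{1}{Z}e^{-\frac{2}{\sigma^2}u}) + \lambda$ is the algebraic expression appearing in~\eqref{eq:xu_mfg_constbound}. Thus the task is purely algebraic: find suitable constant sub/supersolutions of the stationary HJB.

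First, I would define the extended-real thresholds
\[ \Lambda_1 := \lim_{m \to 0^+} \sup_{x \in \Omega} h(x,m), \qquad \Lambda_2 := \lim_{m \to \infty} \inf_{x \in \Omega} h(x,m), \]
both of which are well-defined (possibly $\pm \infty$) because $m \mapsto \sup_x h(x,m)$ and $m \mapsto \inf_x h(x,m)$ are monotone (being suprema/infima of functions increasing in $m$ by~\ref{a:hincrease}). Assumptions~\ref{a:mfg_mto0} and~\ref{a:mfg_mtoinf} immediately give
\[ \Lambda_1 < \inf_{x \in \Omega} h\bigl(x,\tfrac{1}{|\Omega|}\bigr) \leq \sup_{x \in \Omega} h\bigl(x,\tfrac{1}{|\Omega|}\bigr) < \Lambda_2, \]
so the interval $(\Lambda_1,\Lambda_2)$ is nonempty.

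Next, fix $\lambda \in (\Lambda_1,\Lambda_2)$ and $Z > 0$. Since $\lambda > \Lambda_1$, by definition of $\Lambda_1$ there exists $m_+ > 0$ with $\sup_{x \in \Omega} h(x,m_+) \leq \lambda$; by monotonicity of $\sup_x h(x,\cdot)$, I may shrink $m_+$ further to guarantee $m_+ \leq \frac{1}{Z}$. Symmetrically, since $\lambda < \Lambda_2$, there exists $m_- > 0$ with $\inf_{x \in \Omega} h(x,m_-) \geq \lambda$, and by monotonicity I may enlarge $m_-$ to ensure $m_- \geq \frac{1}{Z}$. Then I set
\[ \bar{u}_{\lambda,Z} := \tfrac{\sigma^2}{2} \log\!\bigl(\tfrac{1}{Z m_+}\bigr) \geq 0, \qquad \underaccent{\bar}{u}_{\lambda,Z} := \tfrac{\sigma^2}{2} \log\!\bigl(\tfrac{1}{Z m_-}\bigr) \leq 0, \]
which invert the exponential relation $m = \tfrac{1}{Z} e^{-\frac{2}{\sigma^2} u}$ so that $\tfrac{1}{Z} e^{-\frac{2}{\sigma^2} \bar{u}_{\lambda,Z}} = m_+$ and $\tfrac{1}{Z} e^{-\frac{2}{\sigma^2} \underaccent{\bar}{u}_{\lambda,Z}} = m_-$. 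The two inequalities in~\eqref{eq:xu_mfg_constbound} then follow directly from the choices of $m_\pm$.

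There is no real obstacle here; the only subtlety is keeping track of the sign conventions (note that $h$ is increasing in $m$ while the map $u \mapsto \tfrac{1}{Z} e^{-\frac{2}{\sigma^2} u}$ is decreasing, so the subsolution corresponds to large density / negative $u$ and the supersolution to small density / positive $u$), and making sure that the monotone adjustment of $m_\pm$ does not conflict with the sign constraint $\underaccent{\bar}{u}_{\lambda,Z} \leq 0 \leq \bar{u}_{\lambda,Z}$ — but this is automatic because moving $m_+$ down and $m_-$ up only pushes $\bar{u}_{\lambda,Z}$ further above $0$ and $\underaccent{\bar}{u}_{\lambda,Z}$ further below $0$, while preserving the sup/inf bounds by monotonicity of $h(x,\cdot)$.
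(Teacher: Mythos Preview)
Your proof is correct and essentially identical to the paper's: both define $\Lambda_1=\lim_{m\to 0}\sup_x h(x,m)$, $\Lambda_2=\lim_{m\to\infty}\inf_x h(x,m)$, locate densities $m_\pm$ on which $h$ sits on the right side of $\lambda$, and invert $m=\tfrac{1}{Z}e^{-\frac{2}{\sigma^2}u}$ to obtain the constants. The only cosmetic difference is that the paper enforces the sign constraint $\underaccent{\bar}{u}_{\lambda,Z}\le 0\le \bar u_{\lambda,Z}$ by taking a $\max/\min$ with $0$, whereas you achieve it by first adjusting $m_\pm$ relative to $\tfrac{1}{Z}$; these are equivalent by the monotonicity of $h(x,\cdot)$.
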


\begin{proof}
     Take $\Lambda_1 = \lim_{m \to 0} \sup_{x \in \Omega} h(x,m)$ and $\Lambda_2 = \lim_{m \to \infty} \inf_{x \in \Omega} h(x,m)$. First $\Lambda_1 < \Lambda_2$ by combining assumptions~\ref{a:mfg_mto0} and~\ref{a:mfg_mtoinf}. Then, since $h$ is continuous and increasing in $m$, for any $\lambda \in \left( \Lambda_1, \Lambda_2 \right)$ there exists $M_{\lambda}^1, M_{\lambda}^2 \in (0,\infty)$ such that $h(x,m) \leq \lambda$ for all $(x,m) \in \Omega \times \left(0,M_{\lambda}^1\right]$, and similarly $h(x,m) \geq \lambda$ for all $(x,m) \in \Omega \times \left[M_{\lambda}^2,\infty\right)$. We define the upper and lower constants for $\lambda \in \left( \Lambda_1, \Lambda_2 \right)$ as 
     \[ \begin{aligned}
            \bar{u}_{\lambda,Z} &= \max \left( - \frac{\sigma^2}{2} \log Z M_{\lambda}^1, 0 \right) \\
            \underaccent{\bar}{u}_{\lambda,Z} &= \min \left( - \frac{\sigma^2}{2} \log Z M_{\lambda}^2, 0 \right) \, .
        \end{aligned} \]
    Then clearly 
    \[ - h \left( x, \frac{1}{Z} e^{- \frac{2}{\sigma^2}\bar{u}_{\lambda,Z}} \right) + \lambda = - h \left( x,\min \left(M_{\lambda}^1, 1 \right) \right) + \lambda \geq - h \left( x,M_{\lambda}^1 \right) + \lambda \geq 0 \, , \]
    while the reverse inequality is true for $\underaccent{\bar}{u}_{\lambda,Z}$. Hence $\bar{u}_{\lambda,Z}, \underaccent{\bar}{u}_{\lambda,Z}$ are the required upper and lower constants.
\end{proof}

\begin{proposition} \label{prop:xu_mfg_1}
    Define $C^{2,\tau}\left(\bar{\Omega}\right)$ as the set of functions $u \in C^2\left(\bar{\Omega}\right)$ whose second partial derivatives are all H\"older continuous with exponent $\tau$ on $\bar{\Omega}$. Assume $h$ is strictly increasing with respect to $m$. Then, for every $\lambda \in (\Lambda_1, \Lambda_2)$ and every $Z \in (0,\infty)$ there exists a unique function, $u_{\lambda,Z} \in C^{2,\tau}\left(\bar{\Omega}\right) \subset C^2\left(\Omega\right) \cap C^1\left(\bar{\Omega}\right)$ for some $\tau \in (0,1)$, which satisfies~\eqref{eq:xu_mfg_bvp1}--\eqref{eq:xu_mfg_bvp3}. Furthermore, $\underaccent{\bar}{u}_{\lambda,Z} \leq u_{\lambda,Z} \leq \bar{u}_{\lambda,Z}$.
\end{proposition}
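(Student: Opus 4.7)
The plan is to prove Proposition \ref{prop:xu_mfg_1} by the method of sub- and super-solutions in the spirit of \cite{Schmitt1978}, adapted to the Neumann boundary data. Lemma \ref{lm:xu_mfg_constbound} already furnishes constants $\underaccent{\bar}{u}_{\lambda,Z}$ and $\bar{u}_{\lambda,Z}$ which act respectively as a sub- and a super-solution of \eqref{eq:xu_mfg_bvp2} and automatically satisfy the Neumann condition \eqref{eq:xu_mfg_bvp3} because they are constants; without loss of generality the inequalities in \eqref{eq:xu_mfg_constbound} may be taken strict by choosing $M_\lambda^1$ slightly smaller and $M_\lambda^2$ slightly larger. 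The key structural observation is that
\[
H(x,u) := -h\!\left(x,\tfrac{1}{Z}e^{-\tfrac{2}{\sigma^2}u}\right)+\lambda
\]
is \emph{strictly} increasing in $u$, since $h$ is strictly increasing in $m$ while $e^{-2u/\sigma^2}$ is strictly decreasing in $u$. This monotonicity drives both the existence argument through comparison and the uniqueness argument through the maximum principle.

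For existence I would first replace $H$ by the truncation
\[
\tilde H(x,u) := H\!\left(x,\,\max\bigl(\underaccent{\bar}{u}_{\lambda,Z},\min(u,\bar{u}_{\lambda,Z})\bigr)\right),
\]
which is bounded, continuous, non-decreasing in $u$ and agrees with $H$ on $[\underaccent{\bar}{u}_{\lambda,Z},\bar{u}_{\lambda,Z}]$. The truncated problem
\[
-\tfrac{\sigma^2}{2}\nabla^2 u+\tfrac{1}{2}|\nabla u|^2+\tilde H(x,u)=0 \ \text{in}\ \Omega, \qquad \nabla u\cdot\nu=0 \ \text{on}\ \partial\Omega,
\]
is a quasilinear elliptic problem with bounded zero-order term and quadratic (natural) growth in the gradient. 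I would solve it by a Leray--Schauder fixed-point argument in $C^{1,\alpha}(\bar\Omega)$; the required a priori $C^{1,\alpha}$ estimates come from the Ladyzhenskaya--Ural'tseva theory of quasilinear equations with natural gradient growth, combined with Lieberman-type boundary gradient estimates for oblique-derivative problems, and Schauder bootstrapping then yields the claimed $C^{2,\tau}(\bar\Omega)$ regularity.

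To see that any such $u$ in fact satisfies $\underaccent{\bar}{u}_{\lambda,Z}\le u\le\bar{u}_{\lambda,Z}$ (so that $\tilde H = H$ along $u$ and the original PDE is solved), I would apply the Hopf--Cole change of variable $v:=e^{-u/\sigma^2}$, which yields the identity
\[
-\tfrac{\sigma^2}{2}\nabla^2 u+\tfrac{1}{2}|\nabla u|^2 = \tfrac{\sigma^4}{2v}\nabla^2 v,
\]
turning the PDE into the semilinear equation $\tfrac{\sigma^4}{2v}\nabla^2 v = -\tilde H(x,u)$. On the open set $\{u > \bar{u}_{\lambda,Z}\}$ one has $\tilde H(x,u) = H(x,\bar{u}_{\lambda,Z}) > 0$ by the strict version of \eqref{eq:xu_mfg_constbound}, so $v$ is strictly superharmonic there, equals $e^{-\bar{u}_{\lambda,Z}/\sigma^2}$ on the interior portion of the boundary of this set, and satisfies $\nabla v\cdot\nu = 0$ on $\partial\Omega$; the strong minimum principle together with the Hopf boundary-point lemma then force the set to be empty. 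The lower bound is symmetric. For uniqueness, given two solutions $u_1,u_2$, the difference $w = u_1-u_2$ solves the linear equation
\[
-\tfrac{\sigma^2}{2}\nabla^2 w+\tfrac{1}{2}(\nabla u_1+\nabla u_2)\cdot\nabla w+c(x)\,w = 0,
\]
with $c(x)\ge 0$ and $c(x) > 0$ wherever $w\ne 0$ by the strict monotonicity of $H$, and the Hopf lemma under Neumann data forces $w\equiv 0$.

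I expect the main technical obstacle to be the reconciliation of the sub/super-solution framework with the quadratic gradient term and Neumann boundary data: the classical arguments in \cite{Schmitt1978} are formulated for nonlinearities of at most linear growth in $\nabla u$, and obtaining a priori $C^1$ control up to the boundary under natural growth requires careful use of Lieberman-type oblique-derivative estimates. Alternatively, the Hopf--Cole substitution could be employed already at the existence step, reducing the PDE to a semilinear equation for $v$ and bypassing this difficulty entirely.
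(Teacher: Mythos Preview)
Your approach is correct and follows the same overall strategy as the paper: sub/super-solutions in the style of \cite{Schmitt1978} for existence, and the strong maximum principle together with Hopf's boundary-point lemma for uniqueness. Your uniqueness argument is essentially identical to the paper's (the paper writes the difference equation without explicitly introducing the coefficient $c(x)$, instead arguing directly on the set where $v>0$, but the content is the same).

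The one place where you work harder than necessary is existence. The technical obstacle you anticipate---reconciling quadratic gradient growth with the sub/super-solution framework under Neumann data---is already absorbed into Corollary~2.9 of \cite{Schmitt1978}, which the paper invokes as a black box. That result applies to Neumann problems and requires only a Bernstein--Nagumo type growth condition of the form
\[
\left|\tfrac{|p|^2}{2}-h\!\left(x,\tfrac{1}{Z}e^{-\tfrac{2}{\sigma^2}u}\right)+\lambda\right|\le f(|u|)\,(1+|p|^2),
\]
which is immediate from assumption~\ref{a:hbound}. Hence the truncation, the Leray--Schauder machinery, the Lieberman oblique-derivative estimates, and the separate Hopf--Cole argument for the a~posteriori bounds $\underaccent{\bar}{u}_{\lambda,Z}\le u\le\bar{u}_{\lambda,Z}$ are all unnecessary: the bounds come for free from the sub/super-solution theorem itself. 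Your alternative suggestion of applying Hopf--Cole at the outset to obtain a semilinear equation is a perfectly valid route as well, and arguably cleaner than either version.
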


\begin{proof}
    Existence is an application of Corollary 2.9 in~\cite{Schmitt1978}, which states that a solution ${u_{\lambda,Z} \in C^{2,\tau}\left(\bar{\Omega}\right)}$ to~\eqref{eq:xu_mfg_bvp1}--\eqref{eq:xu_mfg_bvp3} exists provided the following properties hold:
    
    \begin{enumerate}
        \item There exist constants $\underaccent{\bar}{u}_{\lambda,Z} \leq 0 \leq \bar{u}_{\lambda,Z}$ satisfying~\eqref{eq:xu_mfg_constbound} for every $x \in \bar{\Omega}$.
        \item There exists a continuous function $f:[0,\infty) \to [0,\infty)$ such that the following inequality holds for every $(x,u,p) \in \bar{\Omega} \times \R \times \R^d$
        \[ \left| \frac{|p|^2}{2} - h \left( x,\frac{1}{Z} e^{- \frac{2}{\sigma^2}u} \right) + \lambda \right| \leq f(|u|) \left( 1 + |p|^2 \right) \, . \]
    \end{enumerate}
    
    Property 1 is true from Lemma~\ref{lm:xu_mfg_constbound}. Property 2 can be shown to be true by taking
    \[ f(u) = \max \left( \frac{1}{2}, |\lambda| + \max \left[ g \left( \frac{1}{Z} e^{- \frac{2}{\sigma^2}u} \right), g \left( \frac{1}{Z} e^{\frac{2}{\sigma^2}u} \right) \right] \right) \, , \]
    where $g$ is defined in assumption~\ref{a:hbound}.
    
    We prove uniqueness using the strong maximum principle and Hopf's Lemma as stated in~\cite{Evans1998}~(Section~6.4.2.~pp.~330--333). Suppose there are two solutions, $u_1,u_2 \in C^2\left(\Omega\right) \cap C^1\left(\bar{\Omega}\right)$ to~\eqref{eq:xu_mfg_bvp1},~\eqref{eq:xu_mfg_bvp2}. Define $a = \nabla (u_1 + u_2)$. Then $a \in L^{\infty}\left(\bar{\Omega}\right)$. Now suppose $u_1 \neq u_2$ and define $v = u_1 - u_2$. Then $v$ must attain its maximum at some point $\bar{x} \in \bar{\Omega}$. First suppose $\bar{x} \in \Omega$. Then there exists an open bounded and connected region $V$ such that $V \subset \Omega$, $\bar{x} \in V$ and $v > 0$ for all $x \in V$. Hence, since $h(x,\cdot)$ is increasing, we have
    \[ - \frac{\sigma^2}{2} \nabla^2 v + \frac{1}{2} a \cdot \nabla v \leq 0 \, , \quad \text{for every} \, x \in V \, . \]
    So by the strong maximum principle $v$ is constant in $V$. Therefore we must have
    \[ h \left( x,\frac{1}{Z} e^{- \frac{2}{\sigma^2}u_1(x)} \right) = h \left( x,\frac{1}{Z} e^{- \frac{2}{\sigma^2}u_2(x)} \right)  \quad \text{for every} \, x \in V \, . \]
    So $u_1 = u_2$ in $V$ because $h$ is strictly increasing, which leads to a contradiction. Therefore the only other option is that $\bar{x} \in \partial \Omega$ and $v(x) < v(\bar{x})$ for every $x \in \Omega$. Hence by Hopf's Lemma (which we can use because $\partial \Omega$ is $C^2$) $\left. \frac{\partial v}{\partial \nu} \right|_{\bar{x}} > 0$, but by the boundary condition~\eqref{eq:xu_mfg_bvp3}, $\frac{\partial v}{\partial \nu} = \frac{\partial u_1}{\partial \nu} - \frac{\partial u_2}{\partial \nu} = 0$. This again leads to a contradiction. Therefore $u_1 = u_2$, and therefore the solution is unique.
\end{proof}

\begin{remark} \label{rmk:XU_mfg_r2}
    It should be noted that the same method to prove uniqueness can be used to prove that $u_{\lambda_1,Z} \geq u_{\lambda_2,Z}$ for all $\lambda_1 \leq \lambda_2$
\end{remark}

\begin{lemma} \label{lm:xu_mfg_monotone}
    Assume $h$ is strictly increasing with respect to $m$. Then for every $x \in \Omega$, $u_{\lambda,Z}(x)$ is decreasing with respect to $\lambda$ and $Z$.
\end{lemma}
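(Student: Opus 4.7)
The plan is to handle the two monotonicity claims separately, with different tools in each case.

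For the dependence on $\lambda$, the approach mirrors Remark~\ref{rmk:XU_mfg_r2} and reuses the strong maximum principle / Hopf's lemma argument from the uniqueness part of Proposition~\ref{prop:xu_mfg_1}. Fix $\lambda_1 < \lambda_2$, set $u_i := u_{\lambda_i, Z}$, $v := u_1 - u_2$, and $M_i := \frac{1}{Z} e^{-\frac{2}{\sigma^2} u_i}$. Subtracting the two PDEs produces
\[
-\frac{\sigma^2}{2}\Delta v + \frac{1}{2}\nabla(u_1+u_2)\cdot\nabla v = h(x,M_1) - h(x,M_2) + (\lambda_2 - \lambda_1).
\]
I would then assume for contradiction that $v$ attains a negative minimum at some $\bar{x} \in \bar{\Omega}$. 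Wherever $v < 0$ one has $u_1 < u_2$, hence $M_1 > M_2$, so monotonicity of $h$ forces the right-hand side to be at least $\lambda_2 - \lambda_1 > 0$. An interior minimum is impossible because there $\Delta v \geq 0$ and $\nabla v = 0$ would make the left-hand side nonpositive; a boundary minimum is excluded by Hopf's lemma, applied on an interior ball contained in $\{v<0\}$ tangent to $\bar{x}$, combined with the Neumann boundary condition $\partial_\nu v(\bar{x}) = 0$.

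For the dependence on $Z$, the plan is to exploit a gauge symmetry of the PDE. Equation~\eqref{eq:xu_mfg_bvp2} depends on $(u,Z)$ only through the combination $\tfrac{1}{Z}e^{-\frac{2}{\sigma^2}u}$, while the derivative terms and the Neumann condition are invariant under the addition of a constant to $u$. Hence if $u_{\lambda, Z}$ solves the BVP with parameters $(\lambda, Z)$, then for any $c \in \R$ the function $u_{\lambda, Z} + c$ solves it with parameters $(\lambda, Z e^{-2c/\sigma^2})$. Uniqueness from Proposition~\ref{prop:xu_mfg_1} then upgrades this to the explicit identity
\[
u_{\lambda, Z'}(x) = u_{\lambda, Z}(x) + \frac{\sigma^2}{2}\log\frac{Z}{Z'},
\]
from which $Z' > Z$ yields $u_{\lambda, Z'}(x) < u_{\lambda, Z}(x)$ pointwise, which is precisely monotonicity in $Z$.

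The main obstacle, as already in Proposition~\ref{prop:xu_mfg_1}, is the careful execution of the boundary case of the $\lambda$ argument: one must verify that $v$ is nonconstant on the relevant neighborhood (which follows from having ruled out the interior case, since a constant $v$ would annihilate the left-hand side while the right-hand side is strictly positive) and that an interior ball in $\Omega$ tangent to $\bar{x}$ and lying in $\{v<0\}$ exists, which is enabled by the $C^{2,\alpha}$ regularity of $\partial\Omega$ in assumption~\ref{a:omega}. The $Z$-monotonicity, by contrast, is essentially free once the scaling symmetry of the PDE is noticed.
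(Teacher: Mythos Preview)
Your proposal is correct and follows essentially the same approach as the paper: the $\lambda$-monotonicity via the maximum principle/Hopf argument is exactly what Remark~\ref{rmk:XU_mfg_r2} alludes to, and your gauge-symmetry identity $u_{\lambda,Z'} = u_{\lambda,Z} + \tfrac{\sigma^2}{2}\log\tfrac{Z}{Z'}$ combined with uniqueness is precisely the paper's argument for $Z$-monotonicity. The only cosmetic difference is that you argue by contradiction at a negative minimum of $v$, whereas the paper (in Proposition~\ref{prop:xu_mfg_1}) works with a positive maximum and the strong maximum principle; both routes are equivalent here.
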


\begin{proof}
    In Light of remark~\ref{rmk:XU_mfg_r2} we need only to prove $u_{\lambda,Z_1} \geq u_{\lambda,Z_2}$ for all $Z_1 \leq Z_2$. However, by substitution we find that $u = u_{\lambda,Z_1} - \frac{\sigma^2}{2} \log \frac{Z_2}{Z_1}$ satisfies~\eqref{eq:xu_mfg_bvp1}--\eqref{eq:xu_mfg_bvp3} with $Z = Z_2$. So by uniqueness of solutions to this PDE proved in Proposition~\ref{prop:xu_mfg_1} we see that
    \[ u_{\lambda,Z_2} = u \leq u_{\lambda,Z_1} \, . \]
\end{proof}

\begin{proposition} \label{prop:xu_mfg_cont}
    Define $\Phi: (\Lambda_1, \Lambda_2) \times (0,\infty) \to L^{\infty}(\Omega)$ by
    \[ \Phi(\lambda,Z) = u_{\lambda,Z} \, , \]
    where $u_{\lambda,Z}$ is the unique solution to~\eqref{eq:xu_mfg_bvp1}--\eqref{eq:xu_mfg_bvp3} as found in the Proposition~\ref{prop:xu_mfg_1}. Assume $h$ is strictly increasing with respect to $m$. Then $\Phi$ is continuous (with respect to $L^{\infty}$ norm).
\end{proposition}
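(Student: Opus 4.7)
The plan is to establish sequential continuity of $\Phi$: given any sequence $(\lambda_n, Z_n) \to (\lambda, Z)$ in $(\Lambda_1, \Lambda_2) \times (0, \infty)$, I would show $u_{\lambda_n, Z_n} \to u_{\lambda, Z}$ uniformly on $\bar{\Omega}$. Since $L^{\infty}(\Omega)$ is metrisable, sequential continuity implies continuity.

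First I would fix a closed neighbourhood $K$ of $(\lambda, Z)$ compactly contained in $(\Lambda_1, \Lambda_2) \times (0, \infty)$; the tail of the sequence eventually lies in $K$. The thresholds $M^1_{\lambda'}, M^2_{\lambda'}$ appearing in Lemma~\ref{lm:xu_mfg_constbound} can be chosen continuously in $\lambda'$, so the barriers $\underaccent{\bar}{u}_{\lambda', Z'}$ and $\bar{u}_{\lambda', Z'}$ are uniformly bounded on $K$, and Proposition~\ref{prop:xu_mfg_1} yields a uniform bound $\|u_{\lambda_n, Z_n}\|_{\infty} \leq M$. Because $Z_n$ stays in a compact subinterval of $(0,\infty)$, the quantity $m_n := \frac{1}{Z_n} e^{-2 u_{\lambda_n, Z_n}/\sigma^2}$ stays in a compact subinterval of $(0,\infty)$, and assumption~\ref{a:hbound} together with the continuity of $h$ uniformly bounds the source term.

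The crux is obtaining a uniform gradient bound. The quadratic term $|\nabla u|^2/2$ in \eqref{eq:xu_mfg_bvp2} obstructs a direct linear Schauder argument, so I would apply the Hopf--Cole substitution $v_n = e^{-u_{\lambda_n, Z_n}/\sigma^2}$. A direct computation shows that \eqref{eq:xu_mfg_bvp2} transforms into the semilinear equation $\frac{\sigma^4}{2} \nabla^2 v_n = v_n \bigl( h(x, v_n^2/Z_n) - \lambda_n \bigr)$, still with Neumann boundary condition $\nabla v_n \cdot \nu = 0$ (preserved because $v_n > 0$). Since $v_n$ is uniformly bounded above and below away from zero and the right-hand side is uniformly bounded in $L^{\infty}$, boundary $W^{2,p}$ estimates, together with assumption~\ref{a:omega} and Sobolev embedding, yield uniform $C^{1,\alpha}(\bar{\Omega})$ bounds on $v_n$ and hence on $u_{\lambda_n, Z_n}$. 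Feeding these bounds back into \eqref{eq:xu_mfg_bvp2} makes it a uniformly elliptic linear equation with uniformly bounded, Hölder-continuous coefficients and right-hand side, so classical Schauder theory upgrades the estimates to uniform $C^{2,\tau}(\bar{\Omega})$ bounds.

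From Arzelà--Ascoli, $\{u_{\lambda_n, Z_n}\}$ is precompact in $C^2(\bar{\Omega})$. Any $C^2$-subsequential limit $u^{\star}$ satisfies \eqref{eq:xu_mfg_bvp1}--\eqref{eq:xu_mfg_bvp3} at parameters $(\lambda, Z)$, because $h$ is continuous and $m_n \to m^{\star}$ uniformly within a compact subinterval of $(0, \infty)$; the uniqueness clause of Proposition~\ref{prop:xu_mfg_1} then forces $u^{\star} = u_{\lambda, Z}$. As every subsequence has the same $C^2$ limit, the full sequence converges to $u_{\lambda, Z}$ in $C^2(\bar{\Omega})$ and a fortiori in $L^{\infty}(\Omega)$. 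The principal obstacle is the uniform gradient estimate; the Hopf--Cole reduction is the cleanest route because it preserves the Neumann condition and reduces the problem to one to which standard semilinear regularity theory applies.
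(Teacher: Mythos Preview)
Your argument is correct and takes a genuinely different route from the paper. The paper also argues by sequential continuity, but instead of deriving uniform a priori estimates via the Hopf--Cole substitution it exploits the monotonicity of $(\lambda,Z)\mapsto u_{\lambda,Z}$ (Lemma~\ref{lm:xu_mfg_monotone}): given $(\lambda_n,Z_n)\to(\lambda,Z)$ it builds the monotone envelopes $\lambda_n^{(1)}=\inf_{j\ge n}\lambda_j$, $\lambda_n^{(2)}=\sup_{j\ge n}\lambda_j$, and likewise for $Z$, so that $u_{\lambda_n^{(1)},Z_n^{(1)}}\ge u_{\lambda_n,Z_n}\ge u_{\lambda_n^{(2)},Z_n^{(2)}}$. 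Compactness of $C^{2,\tau}(\bar\Omega)\hookrightarrow C^2(\bar\Omega)$ (quoted from Proposition~\ref{prop:xu_mfg_1}) gives subsequences of the two bounding families converging in $C^2$ to solutions of \eqref{eq:xu_mfg_bvp1}--\eqref{eq:xu_mfg_bvp3} at $(\lambda,Z)$, hence to $u_{\lambda,Z}$ by uniqueness; the sandwich then forces $u_{\lambda_n,Z_n}\to u_{\lambda,Z}$ in $L^\infty$. Your approach trades the monotonicity/sandwich trick for an explicit bootstrap: the Hopf--Cole change $v=e^{-u/\sigma^2}$ removes the quadratic gradient and puts you in a setting where standard $W^{2,p}$ and Schauder estimates yield uniform $C^{2,\tau}$ bounds directly, after which the usual ``every subsequence has a sub-subsequence with the same limit'' argument finishes. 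Your route is more self-contained about why the $C^{2,\tau}$ bounds are uniform (the paper's compactness step leans on this without spelling it out) and does not need Lemma~\ref{lm:xu_mfg_monotone}; the paper's route is shorter once monotonicity is in hand and avoids invoking elliptic regularity machinery beyond what is already packaged in Proposition~\ref{prop:xu_mfg_1}.
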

\begin{proof}

    We will prove $\Phi$ is sequentially continuous. Let $(\lambda_n,Z_n)$ be a sequence in $(\Lambda_1,\Lambda_2) \times (0,\infty)$ that converges to $(\lambda,Z) \in (\Lambda_1,\Lambda_2) \times (0,\infty)$. We consider two sequences: $(\lambda_{n}^{(i)}, Z_{n}^{(i)})$ for $i = 1,2$, which we use to sandwich our original sequence. We set these sequences with the following conditions 
    \begin{enumerate}
        \item $\lambda_n^{(1)} = \inf_{j \geq n} \lambda_j$
        \item $\lambda_n^{(2)} = \sup_{j \geq n} \lambda_j$
        \item $Z_n^{(1)} = \inf_{j \geq n} Z_j$
        \item $Z_n^{(2)} = \sup_{j \geq n} Z_j$
    \end{enumerate}
    In the first part of this proof we show that for each $i = 1,2$, there exists a subsequence $(\lambda_{n_k}^{(i)}, Z_{n_k}^{(i)})$ such that $u_{\lambda_{n_k}^{(i)}, Z_{n_k}^{(i)}} \to u_{\lambda, Z}$. We will only show this for $i = 1$ as the case $i = 2$ is identical. Clearly the sequence $(\lambda_{n}^{(1)}, Z_{n}^{(1)})$ also converges to $(\lambda,Z)$. So there exists a subsequence $n_k$ such that $u_{\lambda_{n_k}^{(1)},Z_{n_k}^{(1)}} \to u_*$ in $C^2(\Omega) \cap C^1(\bar{\Omega})$ because $u_{\lambda_{n_k}^{(1)},Z_{n_k}^{(1)}} \in C^{2,\tau}(\bar{\Omega})$ (by Proposition~\ref{prop:xu_mfg_1}), which is compactly embedded in $C^2(\bar{\Omega}) \subset C^2(\Omega) \cap C^1(\bar{\Omega})$. Therefore we also get the following pointwise convergence
    \begin{align*}
            0 & = \lim_{k \to \infty} \left[-\frac{\sigma^2}{2} \nabla^2 u_{\lambda_{n_k},Z_{n_k}} + \frac{1}{2}\left| \nabla u_{\lambda_{n_k},Z_{n_k}}\right|^2 - h \left( x, \frac{1}{Z_{n_k}} e^{- \frac{2}{\sigma^2} u_{\lambda_{n_k},Z_{n_k}}} \right) + \lambda_{n_k} \right]\\
            & = - \frac{\sigma^2}{2} \nabla^2 u_* + \frac{\left| \nabla u_* \right|^2}{2} - h \left( x, \frac{1}{Z} e^{- \frac{2}{\sigma^2} u_*} \right) + \lambda\\
         0 &= \lim_{k \to \infty} \nabla u_{\lambda_{n_k},Z_{n_k}} \cdot \nu |_{x \in \partial \Omega} = \nabla u_* \cdot \nu |_{x \in \partial \Omega} \, .
    \end{align*}
    So $u_* = u_{\lambda,Z}$, by uniqueness proved in Proposition~\ref{prop:xu_mfg_1}. Now by design we have $\lambda_{n_k}^{(2)} \geq \lambda_n \geq \lambda_{n_k}^{(1)}$ for all $n \geq n_k$ and similarly for $Z_n$, hence $u_{\lambda_{n_k}^{(1)},Z_{n_k}^{(1)}} \geq u_{\lambda_n,Z_n} \geq u_{\lambda_{n_k}^{(2)},Z_{n_k}^{(2)}}$ by Lemma~\ref{lm:xu_mfg_monotone}. So $u_{\lambda_n,Z_n} \to u_{\lambda,Z}$ in $L^{\infty}(\Omega)$.
\end{proof}

\begin{proposition} \label{prop:xu_mfg_lambda}
    For each $Z \in (0,\infty)$ define $I_1(\cdot;Z): (\Lambda_1,\Lambda_2) \to \R$ by
    \[ I_1(\lambda;Z) = \int_{\Omega} u_{\lambda,Z}~dx = \int_{\Omega} \Phi(\lambda,Z)~dx \, . \]
    Assume $h$ is strictly increasing with respect to $m$. Then for every $Z \in (0,\infty)$ there exists a unique $\lambda = \lambda(Z)$ such that $I_1(\lambda(Z);Z) = 0$, furthermore 
    \begin{equation} \label{eq:xu_mfg_lambdabound}
        \inf_{x \in \Omega} h \left( x,\frac{1}{Z} \right) \leq \lambda(Z) \leq \sup_{x \in \Omega} h \left( x,\frac{1}{Z} \right) \, .
    \end{equation}
\end{proposition}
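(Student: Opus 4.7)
The plan is to apply the intermediate value theorem to $I_1(\cdot;Z)$ on the interval $(\Lambda_1,\Lambda_2)$, paralleling the strategy of Lemma~\ref{lm:xu_brs_3}. I need three ingredients: continuity of $I_1$, strict monotonicity of $I_1$, and a sign change. If the sign change can be placed at the two endpoints $\lambda_\pm$ appearing in the bound~\eqref{eq:xu_mfg_lambdabound}, then existence, uniqueness and the bound all follow together.

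Continuity of $\lambda\mapsto I_1(\lambda;Z)$ is immediate from Proposition~\ref{prop:xu_mfg_cont}: $L^\infty$-convergence on the bounded domain $\Omega$ passes to the integral. Strict monotonicity follows from Lemma~\ref{lm:xu_mfg_monotone} together with a short contradiction argument: for $\lambda_1<\lambda_2$ one has $v:=u_{\lambda_1,Z}-u_{\lambda_2,Z}\geq 0$ pointwise; if $v\equiv 0$, plugging into~\eqref{eq:xu_mfg_bvp2} for the two values of $\lambda$ would force $\lambda_1=\lambda_2$, so $v\not\equiv 0$, and the continuity of $v$ together with $v\geq 0$ gives $\int_\Omega v\,dx>0$, i.e. $I_1(\lambda_1;Z)>I_1(\lambda_2;Z)$.

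For the sign change together with the bound~\eqref{eq:xu_mfg_lambdabound}, set $\lambda_-:=\inf_{x\in\Omega}h(x,1/Z)$ and $\lambda_+:=\sup_{x\in\Omega}h(x,1/Z)$, and first assume $\lambda_\pm\in(\Lambda_1,\Lambda_2)$. At $\lambda=\lambda_+$, let $x_0$ be a maximiser of $u:=u_{\lambda_+,Z}$ on $\bar{\Omega}$. If $x_0\in\Omega$, then $\nabla u(x_0)=0$ and $\nabla^2 u(x_0)\leq 0$, so \eqref{eq:xu_mfg_bvp2} gives $h(x_0,\tfrac{1}{Z}e^{-2u(x_0)/\sigma^2})\geq\lambda_+\geq h(x_0,1/Z)$, and monotonicity of $h$ in $m$ yields $u(x_0)\leq 0$. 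A strict boundary maximum is excluded by the Hopf lemma combined with the Neumann condition~\eqref{eq:xu_mfg_bvp3}, exactly as in Proposition~\ref{prop:xu_mfg_1}. Hence $u_{\lambda_+,Z}\leq 0$ and $I_1(\lambda_+;Z)\leq 0$; the dual argument applied to a minimiser of $u_{\lambda_-,Z}$ gives $I_1(\lambda_-;Z)\geq 0$. Combining this with continuity and strict monotonicity, IVT produces a unique $\lambda(Z)\in[\lambda_-,\lambda_+]$ with $I_1(\lambda(Z);Z)=0$.

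The main technical obstacle is that $\lambda_\pm$ may fall outside $(\Lambda_1,\Lambda_2)$, which can occur for $Z\neq 1/|\Omega|$. In such cases the relevant half of~\eqref{eq:xu_mfg_lambdabound} is automatic from $\lambda(Z)\in(\Lambda_1,\Lambda_2)$, but the sign change still has to be recovered at the endpoints of $(\Lambda_1,\Lambda_2)$. For this I would use the constant functions $c_M:=-\tfrac{\sigma^2}{2}\log(ZM)$, which evaluate the operator in~\eqref{eq:xu_mfg_bvp2} as $-h(x,M)+\lambda$ and hence are classical sub- respectively super-solutions whenever $\lambda\leq\inf_x h(x,M)$, respectively $\lambda\geq\sup_x h(x,M)$. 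The same maximum/minimum-plus-Hopf comparison argument forces $u_{\lambda,Z}\geq c_M$ (resp. $\leq c_M$), and letting $M\to 0^+$ as $\lambda\to\Lambda_1^+$ (resp.\ $M\to\infty$ as $\lambda\to\Lambda_2^-$) drives $u_{\lambda,Z}$ and hence $I_1(\lambda;Z)$ to $\pm\infty$. IVT on $(\Lambda_1,\Lambda_2)$ then closes the existence argument in full generality, and strict monotonicity rules out spurious zeros outside $[\lambda_-,\lambda_+]$.
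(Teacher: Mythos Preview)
Your overall strategy---continuity of $I_1$ from Proposition~\ref{prop:xu_mfg_cont}, strict monotonicity from Lemma~\ref{lm:xu_mfg_monotone} plus a contradiction, and a sign change at $\lambda_\pm$ via a maximum-principle argument---is exactly the paper's. Your direct maximum-principle computation showing $u_{\lambda_+,Z}\leq 0$ is equivalent to what the paper does: it simply quotes the upper bound $\bar u_{\lambda_+,Z}=0$ from Lemma~\ref{lm:xu_mfg_constbound} and Proposition~\ref{prop:xu_mfg_1} (taking $M_{\lambda_+}^1=1/Z$), which is precisely the constant-supersolution comparison you write out.

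You correctly identify a point the paper does not address, namely that $\lambda_\pm$ need not lie in $(\Lambda_1,\Lambda_2)$ for general $Z$. But your proposed repair does not work. To force $I_1(\lambda;Z)\to+\infty$ as $\lambda\to\Lambda_1^+$ you need a \emph{lower} barrier $c_M\to+\infty$, hence $M\to 0$; yet $c_M$ is a subsolution only when $\lambda\leq\inf_x h(x,M)$, and since $h$ is increasing in $m$ the quantity $\inf_x h(x,M)$ decreases to a limit at most $\Lambda_1$ as $M\to 0$, so this inequality fails for every $\lambda>\Lambda_1$ once $M$ is small. What \emph{is} available for small $M$ is $\lambda\geq\sup_x h(x,M)$, but that makes $c_M$ a \emph{super}solution and yields only $u_{\lambda,Z}\leq c_M\to+\infty$, which carries no information. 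The same reversal occurs at the other endpoint with $M\to\infty$. So the endpoint-blowup argument does not close the gap, and existence of $\lambda(Z)$ remains unproved for those $Z$ with $\lambda_-\leq\Lambda_1$ or $\lambda_+\geq\Lambda_2$.
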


\begin{proof}
    We use the intermediate value theorem to prove this proposition. There are three parts we have to prove
    \begin{enumerate}
        \item For every $Z \in (0,\infty)$ there exists $\lambda_1 \leq \lambda_2 \in (\Lambda_1, \Lambda_2)$ such that $I_1(\lambda_1;Z) \leq 0$ and $I_1(\lambda_2;Z) \geq 0$.
        \item $I_1(\lambda;Z)$ is continuous with respect to $\lambda$ in $[\lambda_1,\lambda_2]$.
        \item $I_1(\lambda;Z)$ is strictly decreasing with respect to $\lambda$.
    \end{enumerate}
    
    Part (1) and part (2) allow us to use the intermediate value theorem to show that for every $Z \in (0,\infty)$ there exists some $\lambda$ such that $I_1(\lambda;Z) = 0$. Part (3) shows that this $\lambda$ is unique, so the function $Z \mapsto \lambda(Z)$ is well defined.
    
    Part (1): Take $\lambda_1 = \sup_{x \in \Omega} h \left( x,\frac{1}{Z} \right) > \Lambda_1$. Then recall that $\bar{u}_{\lambda_1, Z} = \max \left( - \frac{\sigma^2}{2} \log(Z M_{\lambda_1}^1), 0 \right)$, where $M_{\lambda_1}^1$ satisfies $h \left( x,M_{\lambda_1}^1 \right) \leq \lambda_1$. But we can take $M_{\lambda_1}^1 = \frac{1}{Z}$ by our choice of $\lambda_1$. So $u_{\lambda_1,Z} \leq \bar{u}_{\lambda_1, Z} = 0$, and therefore $I_1(\lambda_1;Z) \leq 0$. The choice for $\lambda_2$ is $\lambda_2 = \inf_{x \in \Omega} h \left( x,\frac{1}{Z} \right)$ and the proof is similar to the above. 
    
    Part (2): Take $\lambda_1,\lambda_2$ as above. By Propositions~\ref{prop:xu_mfg_1} and~\ref{prop:xu_mfg_cont} and Lemma~\ref{lm:xu_mfg_monotone}, $u_{\lambda,Z}$ is continuous with respect to $\lambda$ in $L^{\infty}(\Omega)$ and $\underaccent{\bar}{u}_{\lambda_2,Z} \leq u_{\lambda,Z} \leq \bar{u}_{\lambda_1,Z}$ for any $\lambda \in [\lambda_1,\lambda_2]$. So by the dominated convergence theorem $I_1$ is continuous in $\lambda$.
    
    Part (3): Take $\lambda_1 < \lambda_2$, from Lemma~\ref{lm:xu_mfg_monotone} we know $u_{\lambda_1,Z} \geq u_{\lambda_2,Z}$. Clearly, since solutions to the PDE~\eqref{eq:xu_mfg_bvp1}--\eqref{eq:xu_mfg_bvp3} are unique, there exists $a \in \Omega$ such that $u_{\lambda_1,Z}(a) \neq u_{\lambda_2,Z}(a)$. Hence, $u_{\lambda_1,Z}(a) > u_{\lambda_2,Z}(a)$ and so by continuity $I_1(\lambda_1,Z) > I_1(\lambda_2,Z)$.
\end{proof}
    
\begin{remark}
    This proposition ensures that for any $Z \in (0,\infty)$ we can find $\lambda = \lambda(Z)$ and $u = u_{\lambda(Z),Z}$ satisfying~\eqref{eq:xu_mfg_bvp1}--\eqref{eq:xu_mfg_bvp3},~\eqref{eq:xu_mfg_bvp5}, so we are left to find $Z^*$ such that~\eqref{eq:xu_mfg_bvp4} holds.
\end{remark}

\begin{lemma} \label{lm:xu_lambda_monotone}
    Assume $h$ is strictly increasing with respect to $m$. Then the function $\lambda(Z)$ is strictly decreasing.
\end{lemma}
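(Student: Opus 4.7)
The plan is to reuse the beautiful substitution identity already established inside the proof of Lemma~\ref{lm:xu_mfg_monotone}. There it was observed that if $u_{\lambda,Z_1}$ solves~\eqref{eq:xu_mfg_bvp1}--\eqref{eq:xu_mfg_bvp3} with parameters $(\lambda,Z_1)$, then $u_{\lambda,Z_1}-\tfrac{\sigma^2}{2}\log\tfrac{Z_2}{Z_1}$ solves the same system with parameters $(\lambda,Z_2)$, so that by the uniqueness result of Proposition~\ref{prop:xu_mfg_1} one has the pointwise identity
\[
    u_{\lambda,Z_2}(x) \;=\; u_{\lambda,Z_1}(x) \;-\; \frac{\sigma^2}{2}\log\frac{Z_2}{Z_1}\qquad\text{for every }x\in\Omega.
\]
This identity, together with the monotonicity of $I_1(\cdot;Z)$ in $\lambda$ already proved in Proposition~\ref{prop:xu_mfg_lambda}, is essentially all we need.

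First I would fix $Z_1<Z_2$ in $(0,\infty)$ and apply the above identity at $\lambda=\lambda(Z_1)$, obtaining $u_{\lambda(Z_1),Z_2}=u_{\lambda(Z_1),Z_1}-\tfrac{\sigma^2}{2}\log(Z_2/Z_1)$. Integrating over $\Omega$ and using the defining property $I_1(\lambda(Z_1);Z_1)=\int_{\Omega}u_{\lambda(Z_1),Z_1}\,dx=0$, I would conclude
\[
    I_1\bigl(\lambda(Z_1);Z_2\bigr) \;=\; -\,|\Omega|\,\frac{\sigma^2}{2}\log\frac{Z_2}{Z_1} \;<\; 0,
\]
where the strict inequality uses $Z_2>Z_1$ together with $|\Omega|>0$ and $\sigma^2>0$.

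Then I would combine this with Part~(3) of Proposition~\ref{prop:xu_mfg_lambda}, which states that $\lambda\mapsto I_1(\lambda;Z_2)$ is strictly decreasing. Since $I_1(\lambda(Z_2);Z_2)=0$ by definition and $I_1(\lambda(Z_1);Z_2)<0=I_1(\lambda(Z_2);Z_2)$, strict monotonicity in $\lambda$ forces $\lambda(Z_1)>\lambda(Z_2)$, which is the desired strict decrease.

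I do not anticipate a real obstacle here: both ingredients (the substitution identity and the strict monotonicity of $I_1$ in $\lambda$) are already in hand, so the argument is essentially a one-line manipulation. The only thing worth double-checking is the sign in the substitution — namely that increasing $Z$ at fixed $\lambda$ really lowers $u_{\lambda,Z}$ by a positive constant — which follows immediately from the identity above and $\log(Z_2/Z_1)>0$ when $Z_1<Z_2$.
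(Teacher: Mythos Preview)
Your proof is correct and follows essentially the same route as the paper: both argue that $I_1(\lambda(Z_1);Z_2)<0=I_1(\lambda(Z_2);Z_2)$ and then invoke the strict monotonicity of $I_1$ in $\lambda$ from Proposition~\ref{prop:xu_mfg_lambda}. The only cosmetic difference is that you use the explicit substitution identity to compute $I_1(\lambda(Z_1);Z_2)=-|\Omega|\tfrac{\sigma^2}{2}\log(Z_2/Z_1)$ exactly, whereas the paper simply cites the (strict) monotonicity of $u_{\lambda,Z}$ in $Z$ from Lemma~\ref{lm:xu_mfg_monotone} to get $I_1(\lambda(Z_1);Z_1)>I_1(\lambda(Z_1);Z_2)$ without computing the gap.
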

    
\begin{proof}
    From Lemma~\ref{lm:xu_mfg_monotone}, $u_{\lambda,Z}$ is strictly decreasing with respect to $Z$. Now suppose $Z_1 < Z_2$ then
    \[ 0 = I_1(\lambda(Z_2),Z_2) = I_1(\lambda(Z_1),Z_1) > I_1(\lambda(Z_1),Z_2) \, . \]
    Therefore, since $I_1$ is strictly decreasing in $\lambda$, $\lambda(Z_2) < \lambda(Z_1)$ so $\lambda(Z)$ is strictly decreasing with respect to $Z$.
\end{proof}
    
\begin{lemma}
    Assume $h$ is strictly increasing with respect to $m$. Then the function $\lambda(Z)$ is continuous.
\end{lemma}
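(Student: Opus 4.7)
The plan is to exploit the monotonicity of $\lambda(Z)$ from Lemma~\ref{lm:xu_lambda_monotone} to reduce continuity at an arbitrary $Z \in (0,\infty)$ to identifying the one-sided limits
\[ \lim_{Z' \downarrow Z} \lambda(Z') \quad \text{and} \quad \lim_{Z' \uparrow Z} \lambda(Z'). \]
Since $\lambda$ is monotone, both exist, and the real content is to check they coincide with $\lambda(Z)$. I would do this by passing to the limit in the defining identity $I_1(\lambda(Z');Z') = 0$ using the joint $L^\infty$-continuity of $\Phi$ already established in Proposition~\ref{prop:xu_mfg_cont}, together with the uniqueness half of Proposition~\ref{prop:xu_mfg_lambda}.

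Concretely, for the right limit I would fix a sequence $Z_n \downarrow Z$ with $Z_n > Z$. Strict monotonicity of $\lambda$ gives that $\lambda(Z_n)$ increases to some limit $\mu \leq \lambda(Z)$, and the whole sequence is trapped in the compact interval $[\lambda(Z_1), \lambda(Z)] \subset (\Lambda_1, \Lambda_2)$. This trapping is the key geometric point, because it guarantees that $(\mu, Z)$ lies in the domain of $\Phi$ and so Proposition~\ref{prop:xu_mfg_cont} applies, yielding $u_{\lambda(Z_n), Z_n} \to u_{\mu, Z}$ in $L^\infty(\Omega)$. Integrating over the bounded domain $\Omega$ and invoking bounded convergence on the constraint $I_1(\lambda(Z_n); Z_n) = 0$ produces
\[ I_1(\mu; Z) = \int_\Omega u_{\mu, Z}\, dx = \lim_{n \to \infty} \int_\Omega u_{\lambda(Z_n), Z_n}\, dx = 0, \]
and the uniqueness part of Proposition~\ref{prop:xu_mfg_lambda} then forces $\mu = \lambda(Z)$. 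Repeating the argument verbatim with an increasing sequence $Z_n \uparrow Z$ identifies the left limit with $\lambda(Z)$ as well.

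Most of the substantive work has already been done upstream in Proposition~\ref{prop:xu_mfg_cont}; what remains is essentially a routine passage to the limit. The one place to exercise care — and the only conceivable obstacle — is confirming that the subsequential limit $\mu$ stays strictly inside the admissible interval $(\Lambda_1, \Lambda_2)$ where $\Phi$ is defined, but this is immediate from the monotonicity of $\lambda$, which pins $\mu$ between the two values $\lambda(Z_1)$ and $\lambda(Z)$ already known to belong to $(\Lambda_1, \Lambda_2)$ by Proposition~\ref{prop:xu_mfg_lambda}.
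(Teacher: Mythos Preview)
Your proposal is correct and follows essentially the same route as the paper: both arguments use the monotonicity of $\lambda$ to extract convergent monotone subsequences, pass to the limit in the identity $I_1(\lambda(Z_n);Z_n)=0$ via the $L^\infty$-continuity of $(\lambda,Z)\mapsto u_{\lambda,Z}$, and invoke the uniqueness in Proposition~\ref{prop:xu_mfg_lambda} to identify the limit as $\lambda(Z)$. The only cosmetic difference is that the paper sandwiches an arbitrary sequence $Z_n\to Z$ between $\inf_{j\ge n}Z_j$ and $\sup_{j\ge n}Z_j$, whereas you work directly with monotone one-sided sequences.
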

    
\begin{proof}
    We will prove $\lambda(Z)$ is sequentially continuous. Let $Z_n$ be a sequence in $(0,\infty)$ that converges to $Z \in (0,\infty)$. We consider two sequences: $Z_{n}^{(i)}$ for $i = 1,2$, which we use to sandwich our original sequence. We choose these sequences as follows
    \begin{enumerate}
        \item $Z_n^{(1)} = \inf_{j \geq n} Z_j$
        \item $Z_n^{(2)} = \sup_{j \geq n} Z_j$
    \end{enumerate}
    
    Now, $Z_n^{(1)},Z_n^{(2)} \to Z$ and are increasing and decreasing sequences respectively. Furthermore, there exists $\underaccent{\bar}{Z},\bar{Z}$ such that $Z_n^{(1)},Z_n^{(2)} \in [\underaccent{\bar}{Z},\bar{Z}]$ for every $n \in \mathbb{N}$. So, since $\lambda(Z)$ is decreasing, $\lambda(Z_n^{(1)}),\lambda(Z_n^{(2)}) \in [\lambda(\bar{Z}),\lambda(\underaccent{\bar}{Z})]$ and are decreasing and increasing respectively. Therefore, $\lambda(Z_n^{(1)}) \to \lambda^{(1)}$ and $\lambda(Z_n^{(2)}) \to \lambda^{(2)}$. Using continuity of $I_1$ we get for $i = 1,2$:
    \[ 0 = \lim_{n \to \infty} I_1 \left( \lambda(Z_n^{(i)}),Z_n^{(i)} \right) =  I_1 \left( \lambda^{(i)},Z \right) \, . \]
    Hence by definition $\lambda^{(1)}  = \lambda^{(2)} = \lambda(Z)$. Since $Z_n^{(i)}$ bound $Z_n$, then $\lambda(Z_n^{(i)})$ bound $\lambda(Z_n)$. Hence $\lambda(Z_n) \to \lambda(Z)$.
\end{proof}

\begin{proposition} \label{prop:xu_mfg_Z}
    Define $I_2:(0,\infty) \to (0,\infty)$ by
    \begin{equation}
        I_2(Z) = \int_{\Omega} \frac{1}{Z} e^{- \frac{2}{\sigma^2} u_{\lambda(Z),Z}}~dx \, .
    \end{equation}
    Assume $h$ is strictly increasing with respect to $m$. Then there exists a unique $Z^* \in (0,\infty)$ such that $I_2(Z^*) = 1$.
\end{proposition}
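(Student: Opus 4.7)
The plan is to apply the intermediate value theorem to $I_2$, mirroring the strategy used in Lemma~\ref{lm:xu_brs_3} and Proposition~\ref{prop:xu_mfg_lambda}. I would verify four things: continuity of $I_2$ on $(0,\infty)$, existence of $Z_-$ with $I_2(Z_-)\geq 1$, existence of $Z_+$ with $I_2(Z_+)\leq 1$, and strict monotonicity of $I_2$ for uniqueness. Continuity is immediate from the continuity of $\lambda(Z)$ established just above, composed with the $L^\infty$-continuity of $\Phi(\lambda,Z)=u_{\lambda,Z}$ from Proposition~\ref{prop:xu_mfg_cont}; the uniform bounds $\underaccent{\bar}{u}_{\lambda,Z}\leq u_{\lambda,Z}\leq \bar u_{\lambda,Z}$ of Lemma~\ref{lm:xu_mfg_constbound} on any compact $Z$-interval allow one to transfer this via dominated convergence to $I_2$.

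For the lower bound I would use Jensen's inequality for the convex function $t\mapsto e^{-\frac{2}{\sigma^2}t}$: since $\int_\Omega u_{\lambda(Z),Z}\,dx=0$ by the defining property of $\lambda(Z)$,
\[
I_2(Z)=\frac{1}{Z}\int_\Omega e^{-\frac{2}{\sigma^2}u_{\lambda(Z),Z}}\,dx\;\geq\;\frac{|\Omega|}{Z}\exp\!\left(-\frac{2}{\sigma^2|\Omega|}\int_\Omega u_{\lambda(Z),Z}\,dx\right)=\frac{|\Omega|}{Z},
\]
so any $Z_-\leq|\Omega|$ works. The upper bound is where assumption~\ref{a:mfg_mto0} is finally used, and I expect this step to be the main obstacle. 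Writing $u_Z:=u_{\lambda(Z),Z}$ and $m_Z:=\tfrac{1}{Z}e^{-\frac{2}{\sigma^2}u_Z}$, the Neumann condition~\eqref{eq:xu_mfg_bvp3} combined with Hopf's lemma (applied, if one prefers, to the Hopf--Cole transform $v=e^{-u_Z/\sigma^2}$, which satisfies a linear elliptic equation) forces the minimum of $u_Z$ to be attained at some interior point $x_1\in\Omega$. There $\nabla u_Z(x_1)=0$ and $\Delta u_Z(x_1)\geq 0$, so~\eqref{eq:xu_mfg_bvp2} yields $h\bigl(x_1,\max_x m_Z\bigr)\leq \lambda(Z)$. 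The bound~\eqref{eq:xu_mfg_lambdabound} reads $\lambda(Z)\leq \sup_x h(x,1/Z)$, and~\ref{a:mfg_mto0} guarantees $\sup_x h(x,1/Z)\to\Lambda_1<\inf_y h(y,1/|\Omega|)$ as $Z\to\infty$. Choosing $Z_+$ large enough that $\lambda(Z_+)<\inf_y h(y,1/|\Omega|)\leq h(x_1,1/|\Omega|)$ and using strict monotonicity of $h(x_1,\cdot)$ forces $\max_x m_{Z_+}<1/|\Omega|$, hence $I_2(Z_+)<1$.

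Continuity together with the two bounds then delivers some $Z^*\in(0,\infty)$ with $I_2(Z^*)=1$ via IVT. For uniqueness I would observe that the scaling computation in the proof of Lemma~\ref{lm:xu_mfg_monotone} shows the quantity $\tilde m_\lambda(x):=\tfrac{1}{Z}e^{-\frac{2}{\sigma^2}u_{\lambda,Z}(x)}$ to be \emph{independent} of $Z$, so $I_2$ factors as $I_2(Z)=F(\lambda(Z))$ with $F(\lambda):=\int_\Omega\tilde m_\lambda\,dx$. Remark~\ref{rmk:XU_mfg_r2} combined with uniqueness from Proposition~\ref{prop:xu_mfg_1} forces $u_{\lambda,1}$ to be pointwise decreasing in $\lambda$ with strict inequality on a nonempty open set (equality everywhere for $\lambda_1\neq\lambda_2$ would contradict PDE uniqueness), so $F$ is strictly increasing; combined with Lemma~\ref{lm:xu_lambda_monotone}, $I_2=F\circ\lambda$ is strictly decreasing and $Z^*$ is unique.
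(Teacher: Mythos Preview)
Your overall architecture---intermediate value theorem plus strict monotonicity---is exactly the paper's, and your uniqueness argument via the factorisation $I_2(Z)=F(\lambda(Z))$ is essentially the paper's Step~(3) made explicit (the paper uses the same $Z$-invariance $\tfrac{1}{Z_1}e^{-\frac{2}{\sigma^2}u_{\lambda,Z_1}}=\tfrac{1}{Z_2}e^{-\frac{2}{\sigma^2}u_{\lambda,Z_2}}$, just without naming it). Your Jensen argument for the lower bound is a genuine improvement over the paper: the paper finds $Z_1\leq|\Omega|$ via assumption~\ref{a:mfg_mtoinf} and the upper-solution bound $\bar u_{\lambda,Z}$, whereas you get $I_2(Z)\geq |\Omega|/Z$ for free from convexity of the exponential together with $\int_\Omega u_{\lambda(Z),Z}\,dx=0$. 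This is shorter and, for this step, does not need~\ref{a:mfg_mtoinf}.

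The gap is in your upper bound. You assert that Hopf's lemma, applied to $v=e^{-u_Z/\sigma^2}$ ``which satisfies a linear elliptic equation'', forces the minimum of $u_Z$ to be interior. But the equation $v$ satisfies is $\Delta v=\tfrac{2}{\sigma^4}\bigl(h(x,m_Z)-\lambda(Z)\bigr)v$, whose zeroth-order coefficient has no sign; the standard Hopf lemma at a boundary maximum of the positive function $v$ then does not apply directly, and splitting $c=c^+-c^-$ does not produce the right inequality either. The conclusion you want (namely $\max_x m_Z\leq 1/|\Omega|$ for $Z$ large) is correct, but the cleanest route is the one the paper takes: it never argues about interior extrema, and instead reads off $u_{\lambda(Z),Z}\geq \underaccent{\bar}{u}_{\lambda(Z),Z}$ from Proposition~\ref{prop:xu_mfg_1}, then uses~\eqref{eq:xu_mfg_lambdabound} together with~\ref{a:mfg_mto0} to choose $Z_2\geq|\Omega|$ with $\underaccent{\bar}{u}_{\lambda(Z_2),Z_2}=-\tfrac{\sigma^2}{2}\log(Z_2/|\Omega|)$, giving $I_2(Z_2)\leq 1$ by a one-line computation. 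If you prefer to keep your extremum argument, you can rescue it by observing that $u_Z\in C^{2,\tau}(\bar\Omega)$ and $\partial_\nu u_Z=0$ on all of $\partial\Omega$ allow an even reflection across the boundary in a straightened chart, turning a boundary minimum into an interior one of a $C^2$ extension and recovering $\Delta u_Z(x_1)\geq 0$; but that is more work than simply invoking the lower-solution bound you already have available.
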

    
\begin{proof}
    Similar to the proof of Proposition~\ref{prop:xu_mfg_lambda}, we prove this proposition using the intermediate value theorem. Again there are three parts we have to prove
     \begin{enumerate}
        \item There exists $Z_1 \leq Z_2 \in (0, \infty)$ such that $I_2(Z_1) \geq 1$ and $I_2(Z_2) \leq 1$.
        \item $I_2(Z)$ is continuous with respect to $Z$ for all $Z \in [Z_1,Z_2]$.
        \item $I_2(Z)$ is strictly decreasing with respect to $Z$.
    \end{enumerate}
    Steps (1) and (2) prove existence via the intermediate value theorem, step (3) proves uniqueness.
    
    Step (1): From assumption~\ref{a:mfg_mto0}, we can find $Z_2 \geq |\Omega|$ such that 
    \begin{equation} \label{eq:xu_mfg_Zbound}
        \sup_{x \in \Omega} h \left( x,\frac{1}{Z_2} \right) \leq \inf_{x \in \Omega} h \left(x, \frac{1}{|\Omega|} \right) \, .
    \end{equation}
    Then, since $\lambda(Z_2) \leq \sup_{x \in \Omega} h \left( x,\frac{1}{Z_2} \right)$ (from~\eqref{eq:xu_mfg_lambdabound}), it follows that
    \[ u_{\lambda(Z_2),Z_2} \geq u_{\sup_{x \in \Omega} h \left( x,\frac{1}{Z_2} \right), Z_2} \geq \underaccent{\bar}{u}_{\sup_{x \in \Omega} h \left( x,\frac{1}{Z_2} \right), Z_2} = \min \left( - \frac{\sigma^2}{2} \log Z_2 M,0 \right) \, , \]
    where $M$ satisfies $h(x,M) \geq \sup_{x \in \Omega} h \left( x,\frac{1}{Z_2} \right)$ for all $x$ (from the proof of Lemma~\ref{lm:xu_mfg_constbound}). But from~\eqref{eq:xu_mfg_Zbound}, this is clearly satisfied by $M = \frac{1}{|\Omega|}$, and in this case $\min \left( - \frac{\sigma^2}{2} \log Z_2 M,0 \right) = - \frac{\sigma^2}{2} \log \frac{Z_2}{|\Omega|}$. Thus
    \[ I_2(Z_2) \leq \int_{\Omega} \frac{1}{Z_2} e^{- \frac{2}{\sigma^2} \left( - \frac{\sigma^2}{2} \log \frac{Z_2}{|\Omega|} \right)}~dx  = \int_{\Omega} \frac{1}{|\Omega|}~dx = 1 \, . \]
    A similar procedure works to find $Z_1$, in which case $Z_1$ satisfies $Z_1 \leq |\Omega|$ and $\inf_{x \in \Omega} h \left( x, \frac{1}{Z_1} \right) \geq \sup_{x \in \Omega} h \left( x, \frac{1}{|\Omega|} \right)$.
    
    Step (2): Take $Z_1 \leq Z_2$ as in Step (1). Then for every $Z \in [Z_1,Z_2]$ there exists $C_1,C_2 \in \R$ such that ( by~\eqref{eq:xu_mfg_lambdabound})
    \[ C_2 = \inf_{x \in \Omega} h \left(x, \frac{1}{Z_2} \right) \leq \lambda(Z_2) \leq \lambda(Z) \leq \lambda(Z_1) \leq  \sup_{x \in \Omega} h \left(x, \frac{1}{Z_1} \right) = C_1 \, . \]
    So $\underaccent{\bar}{u}_{C_1,Z_2} \leq u_{\lambda(Z),Z} \leq \bar{u}_{C_2,Z_1}$ for every $Z \in [Z_1,Z_2]$. So we can use the dominated convergence theorem along with continuity of $u_{\lambda,Z}$ with respect $(\lambda,Z)$ and continuity of $\lambda(Z)$ with respect to $Z$ to show $I_2(Z)$ is continuous.
    
    Step (3): Take $\underaccent{\bar}{Z} < \bar{Z}$ then there exists $a \in \Omega$ such that
    \[ u_{\lambda(\underaccent{\bar}{Z}),\bar{Z}}(a) < u_{\lambda(\bar{Z}),\bar{Z}}(a) \, . \]
    Therefore, at $a \in \Omega$:
    \[ \frac{1}{\underaccent{\bar}{Z}} e^{- \frac{2}{\sigma^2} u_{\lambda(\underaccent{\bar}{Z}),\underaccent{\bar}{Z}}} = \frac{1}{\bar{Z}} e^{- \frac{2}{\sigma^2} u_{\lambda(\underaccent{\bar}{Z}),\bar{Z}}} > \frac{1}{\bar{Z}} e^{- \frac{2}{\sigma^2} u_{\lambda(\bar{Z}),\bar{Z}}} \, . \]
    So $I_2(\underaccent{\bar}{Z}) > I_2(\bar{Z})$ because of the continuity of $u_{\lambda,Z}$. This proves $I_2$ is strictly decreasing.
\end{proof}
    
\begin{proof}[End of proof of Theorem~\ref{thm:xu_mfg1}]
    First let's assume $h$ is a strictly increasing function in $m$. Then we can choose the unique $Z^* \in (0,\infty)$ such that $I_2(Z^*) = 1$. Then clearly the triple ${\left( u_{\lambda(Z^*),Z^*}, \lambda(Z^*), Z^* \right)}$ is a solution to the system~\eqref{eq:xu_mfg_sys}. Furthermore, suppose $(u',\lambda',Z')$ is also a solution of~\eqref{eq:xu_mfg_sys}. But this implies that $u'$ satisfies~\eqref{eq:xu_mfg_bvp1}--\eqref{eq:xu_mfg_bvp3}, so $u' = u_{\lambda',Z'}$ from uniqueness proven in Proposition~\ref{prop:xu_mfg_1}. Then $u_{\lambda',Z'}$ also solves~\eqref{eq:xu_mfg_bvp5}, so by uniqueness proven in Proposition~\ref{prop:xu_mfg_lambda} we can show $\lambda' = \lambda(Z')$. Finally we now have $u' = u_{\lambda(Z'),Z'}$ meets the integral constraint~\eqref{eq:xu_mfg_bvp4}. So from uniqueness proven in Proposition~\ref{prop:xu_mfg_Z} we have $Z' = Z^*$. Therefore ${(u',\lambda',Z') = \left( u_{\lambda(Z^*),Z^*}, \lambda(Z^*), Z^* \right)}$. Hence the unique solution to the MFG problem is given by $\left( m_{Z^*},u_{\lambda(Z^*),Z^*},\lambda(Z^*) \right)$, where $m_{Z^*}$ is defined by $m_{Z^*} = \frac{1}{Z^*} e^{- \frac{2}{\sigma^2}u_{z^*}}$.
    
    Now let's assume $h$ is an increasing function in $m$ and define $h_{\epsilon}(x,m)$ by
    \[ h_{\epsilon}(x,m) = h(x,m) + \epsilon \log \left(|\Omega| m \right) \, . \]
    Then for every $\epsilon \in (0,1]$, $h_{\epsilon}$ is a strictly increasing function of $m$. Furthermore $h_{\epsilon}$ still satisfies assumptions~\ref{a:mfg_hreg}--\ref{a:mfg_mtoinf}. Therefore there exists a unique solution $(u_{\epsilon},\lambda_{\epsilon},Z_{\epsilon})$ to the MFG system~\eqref{eq:xu_mfg_sys}. From Proposition~\ref{prop:xu_mfg_Z}, $Z_{\epsilon} \in [Z^1_{\epsilon},Z^2_{\epsilon}]$ for some $Z^1_{\epsilon},Z^2_{\epsilon} \in (0,\infty)$ such that
    \[ \begin{aligned}
        0 < Z^1_{\epsilon} \leq &|\Omega| \leq Z^2_{\epsilon} < \infty \\
        \sup_{x \in \Omega} h_{\epsilon} \left(x,\frac{1}{Z^2_{\epsilon}}\right) &\leq \inf_{x \in \Omega} h_{\epsilon} \left(x,\frac{1}{|\Omega|}\right) \\
        \inf_{x \in \Omega} h_{\epsilon} \left(x,\frac{1}{Z^1_{\epsilon}}\right) &\geq \sup_{x \in \Omega} h_{\epsilon} \left(x,\frac{1}{|\Omega|}\right) \, .
    \end{aligned} \]
    But by the definition of $h_{\epsilon}$ we have $h_{\epsilon} \left(x,\frac{1}{Z^2_{\epsilon}}\right) \leq h \left(x,\frac{1}{Z^2_{\epsilon}}\right)$ and $h_{\epsilon} \left(x,\frac{1}{|\Omega|}\right) = h \left(x,\frac{1}{|\Omega|}\right)$, and a similar inequality holds for $Z^1_{\epsilon}$ . So we can find $Z^1 \in (0,|\Omega|]$ and $Z^2 \in [|\Omega|,\infty)$ independent of $\epsilon$ such that $Z_{\epsilon} \in [Z^1,Z^2]$ for every $\epsilon \in (0,1]$. Now, from Lemma~\ref{lm:xu_lambda_monotone} 
    \[ \lambda_{\epsilon} = \lambda(Z_{\epsilon}) \in \left[\lambda(Z^2),\lambda(Z^1)\right] \, . \]
    So take a sequence $\epsilon_n$ such that $\lim_{n \to \infty} \epsilon_n = 0$. Then, since $u_{\epsilon} \in C^{2,\tau}\left(\bar{\Omega}\right)$, which is compactly embedded in $C^2\left(\Omega\right) \cap C^1\left(\bar{\Omega}\right)$, there exists a subsequence also denoted by $n$ such that $u_{\epsilon_n} \to u_0$ with convergence in $C^2\left(\Omega\right) \cap C^1\left(\bar{\Omega}\right)$, $Z_{\epsilon_n} \to Z_0 \in [Z^1,Z^2]$, and $\lambda_{\epsilon_n} \to \lambda_0 \in \left[\lambda(Z^2),\lambda(Z^1)\right]$. So we find, by taking limits
    \[ - \frac{\sigma^2}{2} \nabla^2 u_0 + \frac{|\nabla u_0|^2}{2} - h\left(x,\frac{1}{Z_0}e^{- \frac{2}{\sigma^2}u_0}\right) + \lambda_0 = 0 \, .\]
    Similarly we can show $(u_0,\lambda_0,Z_0)$ satisfy~\eqref{eq:xu_mfg_sys}. So we have proven existence of solutions for increasing $h$. 
    
    For uniqueness, let's assume $(u_1,\lambda_1,Z_1)$ and $(u_2,\lambda_2,Z_2)$ are both solutions of~\eqref{eq:xu_mfg_sys} and $\lambda_1 \leq \lambda_2$. Define $u = u_2 - \frac{\sigma^2}{2} \log\left(\frac{Z_1}{Z_2}\right)$, then $u$ satisfies
    \[ - \frac{\sigma^2}{2} \nabla^2 u + \frac{|\nabla u|^2}{2} - h\left(x,\frac{1}{Z_1}e^{- \frac{2}{\sigma^2}u}\right) + \lambda_1 = 0 \, . \] 
    Now define $v = u - u_2$ and suppose there exists $x \in \bar{\Omega}$ such that $v(x) > 0$. Then $v$ has a maximum at $x^*$ and $v(x^*) > 0$. By Hopf's lemma $x^* \in \Omega$ and by the maximum principle $v$ is constant in the set $\Omega_+ = \{x \in \Omega : v(x) \geq 0\}$ (see the proof of Proposition~\ref{prop:xu_mfg_1} for the details of such an argument). By assumption $x^* \in \Omega_+$ and $v(x^*) > 0$, so $\Omega_+ = \Omega$ by continuity of $v$. Hence $v$ is constant in $\Omega$ and $v > 0$. However, from the integral constraint~\eqref{eq:xu_mfg_bvp4} we obtain
    \begin{equation} \label{eq:xu_mfg_proof1}
        0 = \int_{\Omega} \frac{1}{Z_1} e^{- \frac{2}{\sigma^2} u_1} \left(1 - e^{- \frac{2}{\sigma^2} v}\right)~dx = |\Omega| \left(1 - e^{- \frac{2}{\sigma^2} v}\right) \, ,
    \end{equation}
    since $1 - e^{- \frac{2}{\sigma^2} v}$ is constant. So $v = 0$, contradicting the assumption $v(x^*) > 0$. Therefore $v(x) \leq 0$ for all $x \in \Omega$. This implies that $1 - e^{- \frac{2}{\sigma^2} v} \leq 0$, and subsequently that
    \[0 = \int_{\Omega} \frac{1}{Z_1} e^{- \frac{2}{\sigma^2} u_1} \left(1 - e^{- \frac{2}{\sigma^2} v}\right)~dx \leq 0 \, ,\]
    with equality if and only if $v = 0$. Therefore $u_2 = u$, which implies (using the integral constraint~\eqref{eq:xu_mfg_bvp5}) that $Z_1 = Z_2$, and subsequently that $u_1 = u_2$. Finally, by subtracting the PDE~\eqref{eq:xu_mfg_bvp2} satisfied by $u_1$ from the one satisfied by $u_2$ we find $\lambda_2 = \lambda_1$. Therefore solutions are unique.
\end{proof}

\section{Quadratic Potential} \label{sec:quad potential}

\noindent In this section we consider a specific example with quadratic potential and a logarithmic congestion term, $h(x,m) = \beta x^2 + \log m$ for some constant $\beta \geq 0$ on the real line. This problem has been studied extensively in~\cite{Gomes2016a} and~\cite{Gueant2009} and admits explicit solutions. This allows us to compare the solutions of the BRS and the MFG. Note that we do not impose any boundary conditions or integral constraints on $u$, since we consider the model on $\R$ rather than on a bounded domain. Therefore it doesn't fit directly into the framework for existence and uniqueness proven in the previous section. It is however, one of the few illustrative examples, where explicit solutions are known. This allows us to make an analytical comparison of the two models and use the solution to validate the proposed numerical methods.

\subsection{The MFG}

The stationary MFG model studied in~\cite{Gomes2016a} and~\cite{Gueant2009}, with the integral constraints used in this paper, is given by:
\begin{subequations}\label{eq:quad_stat_mfg1}
\begin{align} 
    \frac{\sigma^2}{2} \partial_{xx}^2 m + \partial_x \left( m \partial_x u \right) &= 0 \, , \quad x \in \R \, , \\ \label{eq:quad_stat_mfg2}
    - \frac{|\partial_x u|^2}{2} + \log m + \beta x^2 + \frac{\sigma^2}{2} \partial_{xx}^2 u + \lambda &= 0 \, , \quad x \in \R \, , \\
    \int_{\R} m~dx &= 1 \, .
\end{align}
\end{subequations}
where $\lambda \in (-\infty, \infty)$ is a constant to be found as part of the solution, and $\sigma, \beta \geq 0$ are given parameters.
\begin{proposition}
    A solution to the stationary MFG system~\eqref{eq:quad_stat_mfg1} exists and has an explicit form 
    \begin{subequations} \label{eq:quad_stat_mfg_sol}
        \begin{align}
            & m(x) = \left( \frac{a}{\pi} \right)^{1/2} e^{- a x^2} \\
            & u(x) = b x^2 \\
            & \lambda = \log \left( \frac{\pi}{a} \right) - \sigma^2 b\, ,
        \end{align}
    \end{subequations}
    where the constants $a,b,c \geq 0$ are given by
    \[ a = \beta,~ b = 0 \, , \]
    if $\sigma = 0$, or
    \[ a = \frac{-1 + \left( 1 + 2 \sigma^4 \beta \right)^{1/2}}{\sigma^4},~b = \frac{-1 + \left( 1 + 2 \sigma^4 \beta \right)^{1/2}}{2 \sigma^2} \, , \]
    if $\sigma > 0$.
\end{proposition}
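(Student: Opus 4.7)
The proposition asserts only existence of a particular explicit solution, so my plan is verification via a Gaussian ansatz. I propose to seek
\[ m(x) = C e^{-ax^2}, \qquad u(x) = b x^2 \]
with $a > 0$. This choice is motivated by the quadratic form of the potential $\beta x^2$ together with the relation $m \propto e^{-2u/\sigma^2}$ arising in the Fokker--Planck part of the analysis in Section~\ref{sec:ex_unique_stat_sol}, which forces $\log m$ to be quadratic in $x$ whenever $u$ is.

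The first step is to substitute the ansatz into the stationary Fokker--Planck equation. Using $\partial_{xx}^2 m = (4 a^2 x^2 - 2 a) m$ and $\partial_x(m \, \partial_x u) = (2 b - 4 a b x^2) m$, both the $x^2$ coefficient and the constant term collapse to the single relation $b = \sigma^2 a / 2$. The second step is to plug the same ansatz into the HJB equation: with $\log m = \log C - a x^2$ and $\partial_{xx}^2 u = 2 b$, matching the coefficients of $x^2$ yields $a + 2 b^2 = \beta$, while matching the constant terms determines $\lambda$ in terms of $C$, $b$, and $\sigma$. Finally, the normalisation $\int_{\R} m \, dx = 1$ fixes $C = (a/\pi)^{1/2}$.

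The remainder of the argument is algebra. When $\sigma = 0$, the Fokker--Planck constraint forces $b = 0$ and the HJB reduces to $a = \beta$ directly. When $\sigma > 0$, eliminating $b$ from $a + 2 b^2 = \beta$ via $b = \sigma^2 a / 2$ yields the quadratic $\sigma^4 a^2 + 2 a - 2 \beta = 0$; the requirement $a > 0$ selects the root $a = (-1 + \sqrt{1 + 2 \sigma^4 \beta})/\sigma^4$, from which $b$ is read off. There is no serious obstacle here; the only care needed is in verifying that the two constraints coming from the Fokker--Planck equation collapse to the same relation and in choosing the positive sign in the quadratic formula. Note that this approach only establishes existence within the Gaussian--quadratic class; uniqueness among all solutions on $\R$ is not claimed, which is consistent with the fact that the framework of Section~\ref{sec:ex_unique_stat_sol} applies to bounded domains with Neumann conditions rather than to the whole line.
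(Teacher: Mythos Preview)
Your proposal is correct and is precisely the approach the paper takes: the paper's proof consists of the single sentence ``The proof is straight-forward using substitution,'' and what you have written is a careful execution of that substitution with the Gaussian--quadratic ansatz, matching coefficients in both equations and solving the resulting algebraic system.
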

The proof is straight-forward using substitution.

\subsection{The BRS}
Next we consider the respective stationary BRS model. It is given by
\begin{subequations} \label{eq:quad_stat_brs}
    \begin{align}
        \partial_x \left( m \partial_x (\log m + \beta x^2) \right) + \frac{\sigma^2}{2} \partial_{xx}^2 m &= 0 \, , \quad x \in \R \\
        \int_{\R} m~dx &= 1 \, .
    \end{align}
\end{subequations}
\begin{proposition}
    The solution to the stationary BRS equation~\eqref{eq:quad_stat_brs} is given by
    \[ m(x) = \left( \frac{2 \beta}{(2 + \sigma^2) \pi} \right)^{1/2} e^{- \frac{2 \beta}{(2 + \sigma^2)} x^2} \, . \]
\end{proposition}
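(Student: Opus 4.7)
The plan is to exploit the one-dimensional structure: the PDE already has the form of the divergence of a flux, so I would integrate it once to obtain an ODE, then separate variables. Concretely, observe that
\[
    \partial_x\bigl( m \partial_x(\log m + \beta x^2)\bigr) + \frac{\sigma^2}{2}\partial_{xx}^2 m
    = \partial_x\!\left[ \partial_x m + 2\beta x\, m + \frac{\sigma^2}{2}\partial_x m \right],
\]
so the equation says the bracketed expression is constant in $x$. Since we seek $m \in L^1(\R)$ with $m \ge 0$ (so that both $m$ and $\partial_x m$ decay at $\pm\infty$), the constant must vanish. This reduces the problem to the first-order linear ODE
\[
    \Bigl(1 + \tfrac{\sigma^2}{2}\Bigr)\partial_x m + 2\beta x\, m = 0.
\]

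Next I would solve this ODE by separation of variables, obtaining
\[
    m(x) = C \exp\!\left( -\frac{2\beta}{2+\sigma^2}\, x^2 \right)
\]
for some constant $C > 0$. Finally I would pin down $C$ from the normalisation $\int_{\R} m\,dx = 1$ using the standard Gaussian integral $\int_{\R} e^{-a x^2}\,dx = \sqrt{\pi/a}$ with $a = 2\beta/(2+\sigma^2)$, which gives $C = \bigl(2\beta/((2+\sigma^2)\pi)\bigr)^{1/2}$, matching the stated formula.

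There is no real obstacle here; the only mildly delicate point is justifying that the constant of integration is zero, and this follows because any non-zero constant flux would produce a linear drift term incompatible with $m \in L^1(\R)$ and $m \ge 0$. One could alternatively just verify the stated $m$ by direct substitution into the PDE and check the normalisation, which is essentially what the authors suggest for the MFG proposition; I would mention this route as a shortcut but prefer the derivation above since it also shows the Gaussian ansatz is the only integrable solution.
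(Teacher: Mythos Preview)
Your argument is correct. The paper, by contrast, proves this proposition in one line: ``Again the claim follows from substitution.'' That is, the authors simply plug the stated Gaussian into \eqref{eq:quad_stat_brs} and check that it satisfies the PDE and the normalisation. Your approach is genuinely different and in fact stronger: by writing the equation as $\partial_x\bigl[(1+\tfrac{\sigma^2}{2})\partial_x m + 2\beta x m\bigr]=0$, using decay at infinity to kill the integration constant, and then solving the resulting linear first-order ODE, you \emph{derive} the Gaussian rather than guess it, and along the way you obtain uniqueness among integrable non-negative solutions. The paper's substitution route is shorter and entirely adequate for verifying the stated formula, while your derivation gives more information at the cost of the mildly delicate point you already flagged (that $m$, $\partial_x m$, and $xm$ all vanish at $\pm\infty$, which is standard for smooth probability densities solving a Fokker--Planck equation but not literally forced by $m\in L^1$ alone).
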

Again the claim follows from substitution. 

\begin{figure}[h!]
    \begin{subfigure}{0.49 \textwidth}
	    \centering
	    \includegraphics[width=\linewidth]{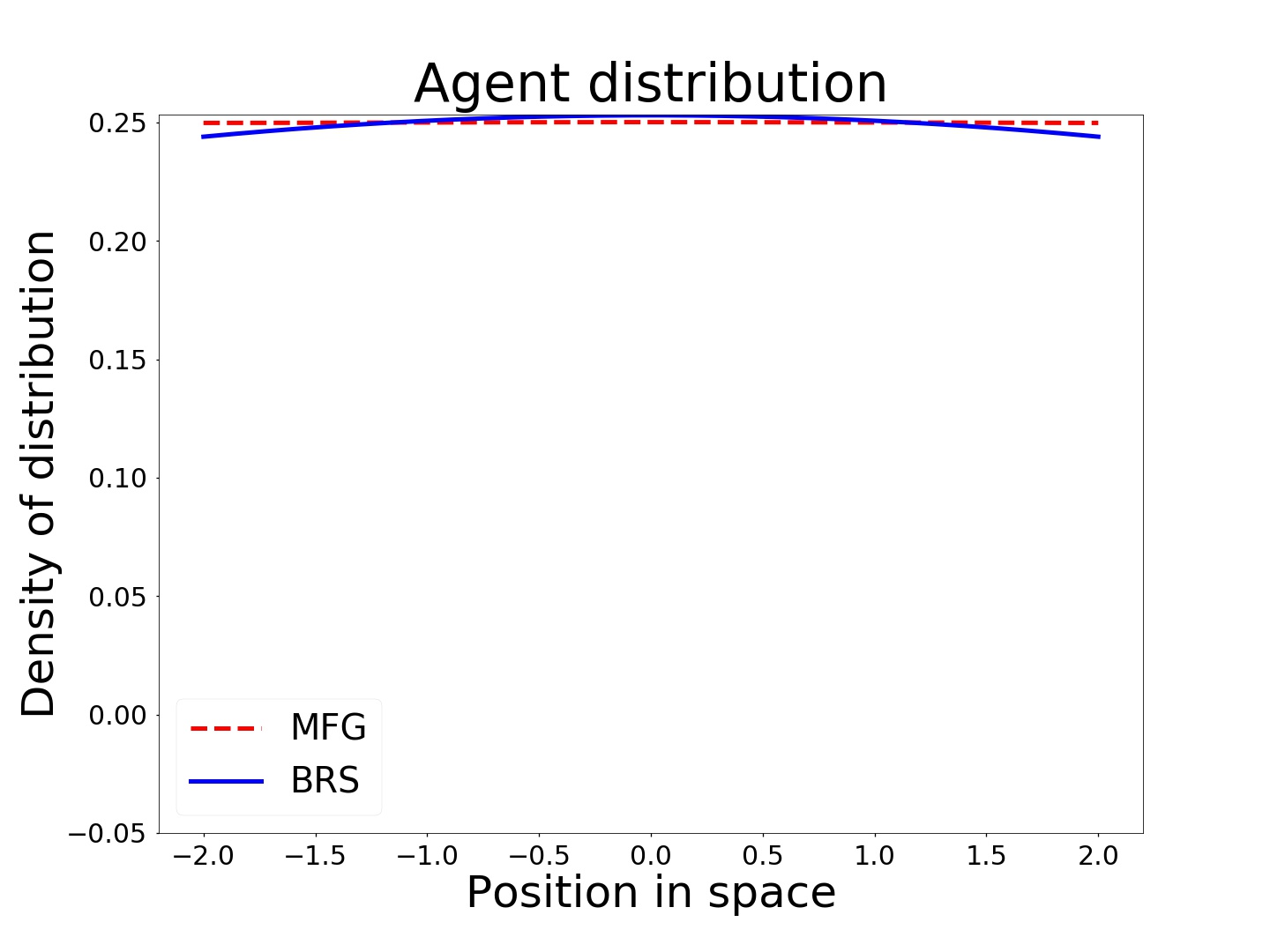}
	    \caption{$\beta = 0.1$, $\frac{\sigma^2}{2} = 10$}
    \end{subfigure}
    \begin{subfigure}{0.5 \textwidth}
	    \centering
	    \includegraphics[width=\linewidth]{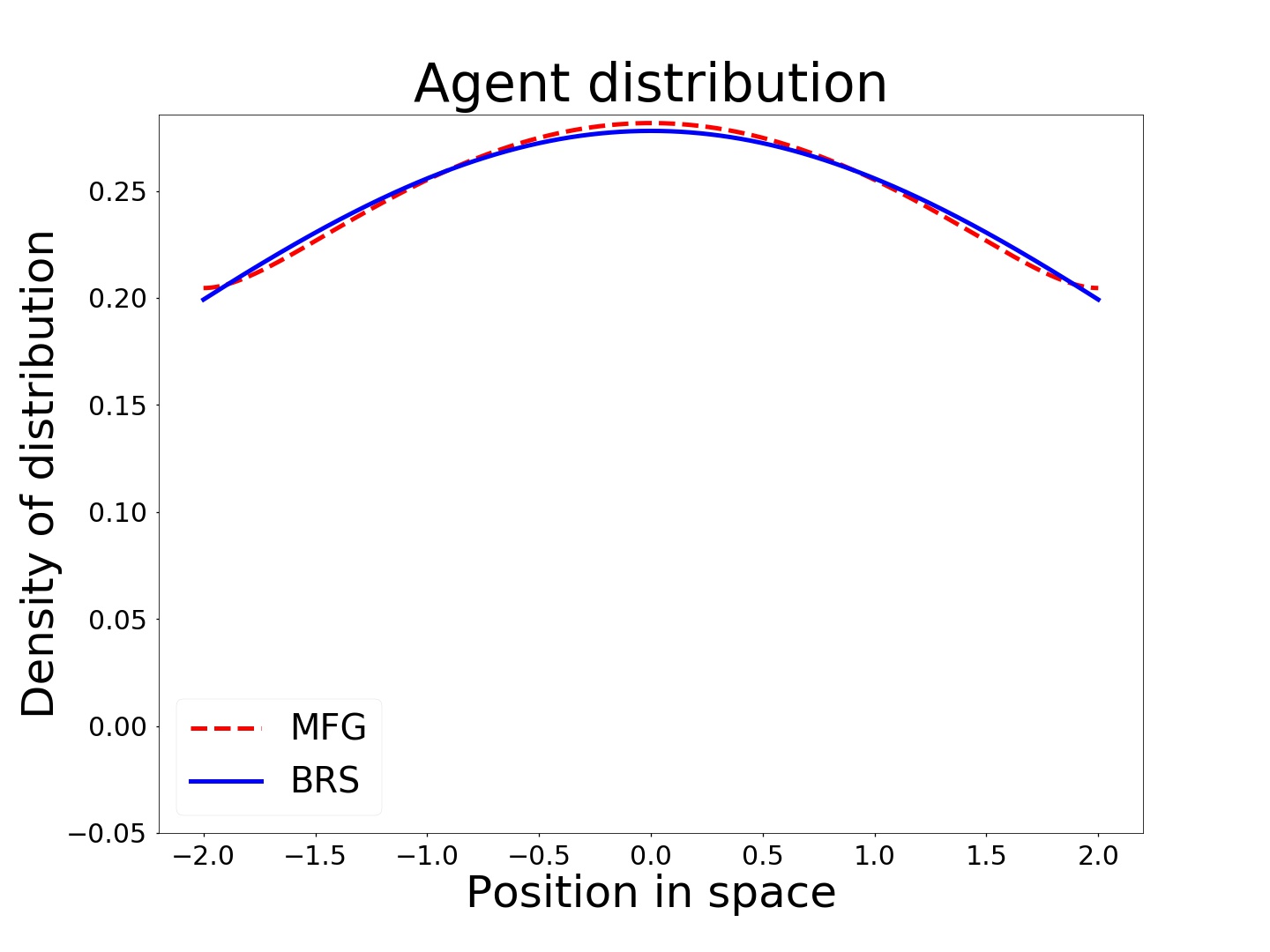}
	    \caption{$\beta = 0.1$, $\frac{\sigma^2}{2} = 0.2$}
    \end{subfigure}
    
    \bigskip
    
    \begin{subfigure}{0.49 \textwidth}
	    \centering
	    \includegraphics[width=\linewidth]{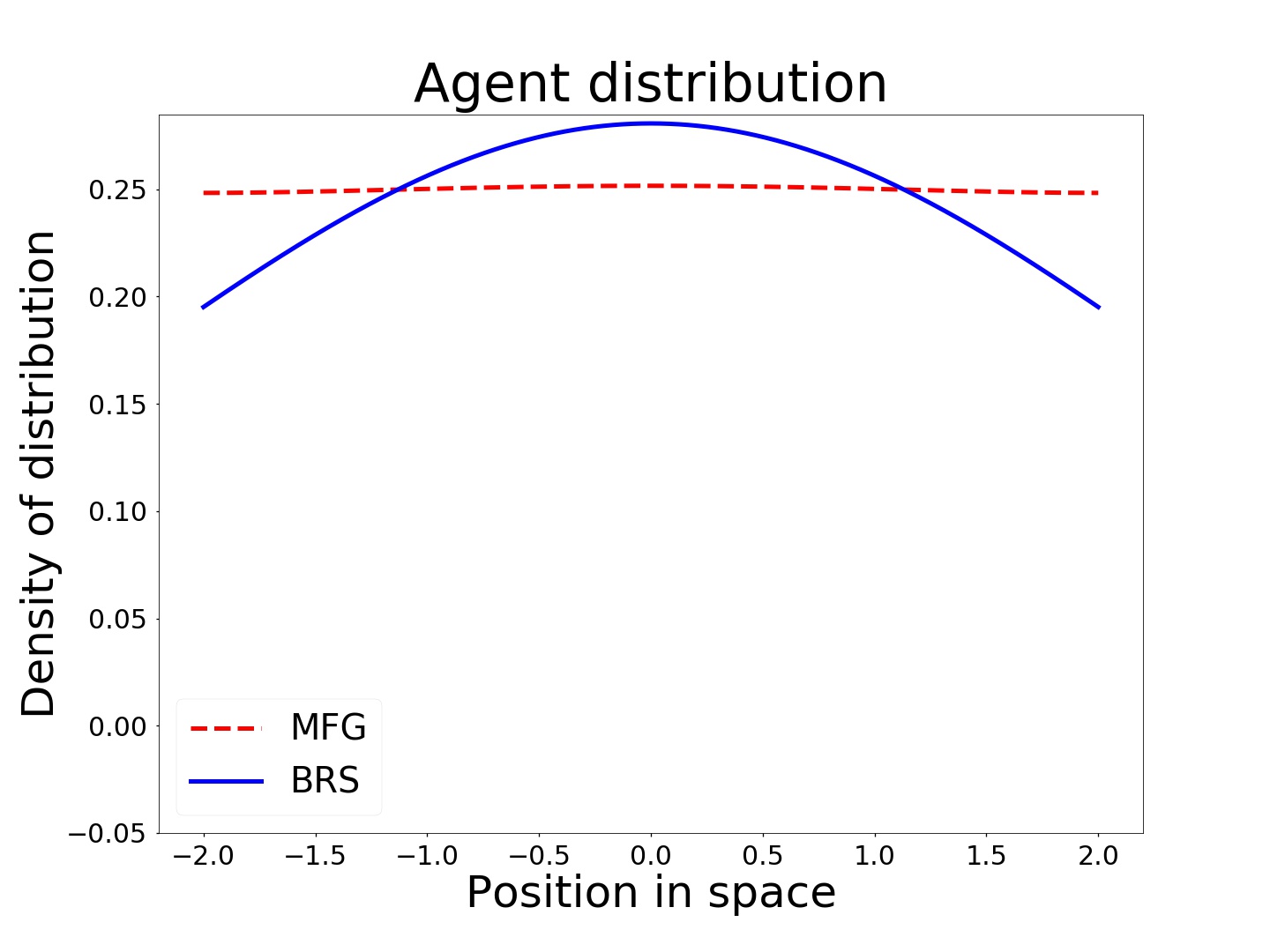}
	    \caption{$\beta = 1$, $\frac{\sigma^2}{2} = 10$}
    \end{subfigure}
    \begin{subfigure}{0.49 \textwidth}
	    \centering
	    \includegraphics[width=\linewidth]{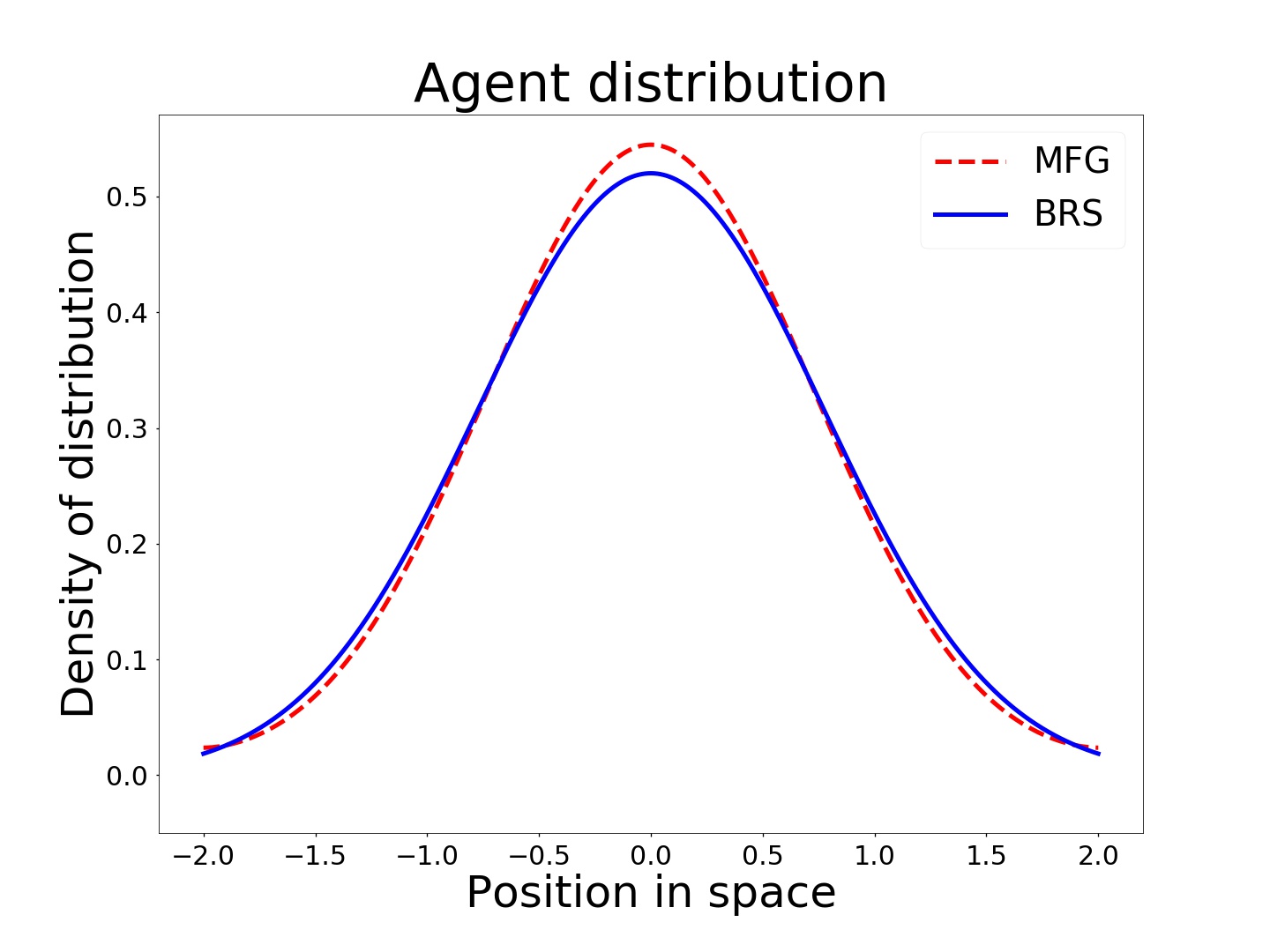}
	    \caption{$\beta = 1$, $\frac{\sigma^2}{2} = 0.2$}
    \end{subfigure}
    
     \bigskip
    
    \begin{subfigure}{0.49 \textwidth}
	    \centering
	    \includegraphics[width=\linewidth]{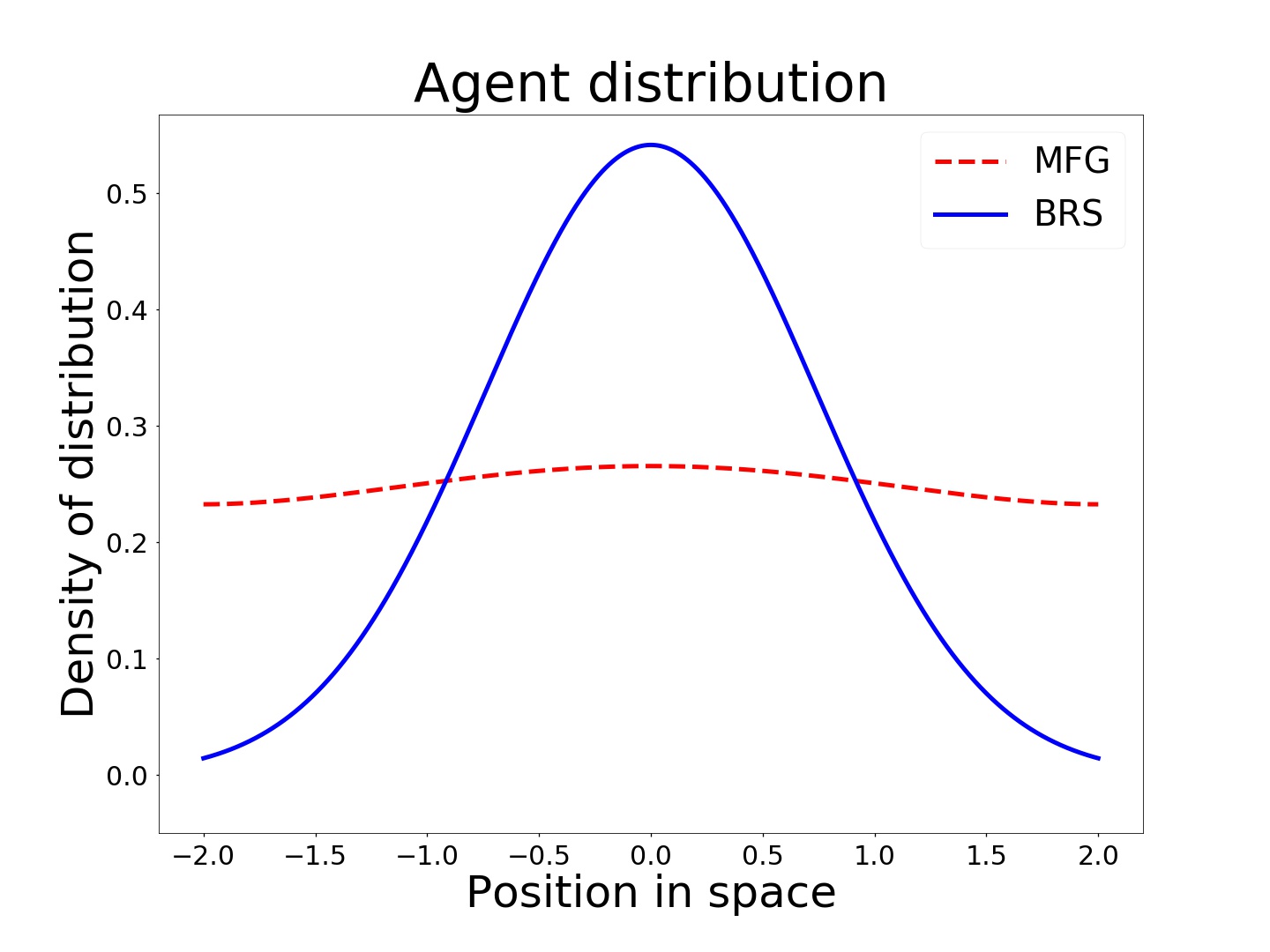}
	    \caption{$\beta = 10$, $\frac{\sigma^2}{2} = 10$}
    \end{subfigure}
    \begin{subfigure}{0.49 \textwidth}
	    \centering
	    \includegraphics[width=\linewidth]{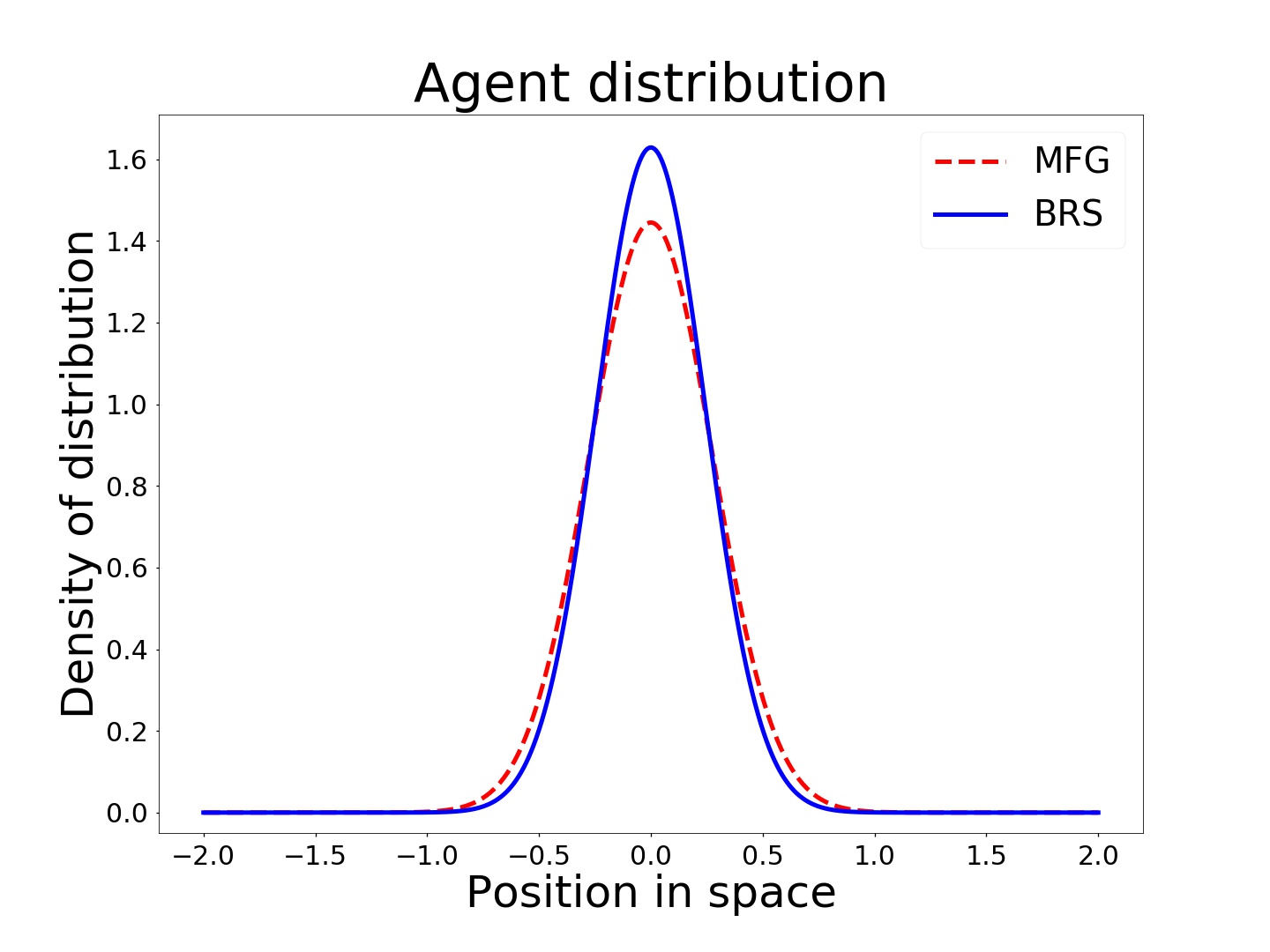}
	    \caption{$\beta = 10$, $\frac{\sigma^2}{2} = 0.2$}
    \end{subfigure}
    \caption{Simulations of BRS and MFG with quadratic potential and logarithmic congestion}
    \label{fig:quad-log}
\end{figure}

\subsection{Comparison}
\begin{proposition}
    For $\sigma > 0$, the stationary distributions of the MFG system~\eqref{eq:quad_stat_mfg1} and the BRS~\eqref{eq:quad_stat_brs} are given by normal distributions with mean $0$ and variances $a_1$ and $a_2$ respectively, where
    \[ \begin{aligned}
        a_1 & = \frac{\sigma^4}{-2 + 2(1 + 2 \sigma^4 \beta)^{1/2}} \\
        a_2 & = \frac{2 + \sigma^2}{4 \beta} \, .
    \end{aligned} \]
    Then for fixed $\beta \geq 0$
    \begin{subequations}
        \begin{align}
            \lim_{\sigma^2 \to 0} \frac{a_2}{a_1} &= 1 \label{eq:quad_lim1} \\
            \lim_{\sigma^2 \to \infty} \frac{a_2}{a_1} &= \frac{1}{(2 \beta)^{1/2}} \, . \label{eq:quad_lim2}
        \end{align}
    \end{subequations}
    While for fixed $\sigma > 0$
    \begin{subequations}
        \begin{align}
            \lim_{\beta \to 0} \frac{a_2}{a_1} &= 1 + \frac{\sigma^2}{2} \label{eq:quad_lim3} \\
            \lim_{\beta \to \infty} (2 \beta)^{1/2} \frac{a_2}{a_1} &= \frac{2 + \sigma^2}{\sigma^2} \, . \label{eq:quad_lim4}
        \end{align}
    \end{subequations}
\end{proposition}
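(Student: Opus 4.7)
The proposition is essentially a computation on top of the two explicit Gaussian solutions already produced, so the plan is to first confirm the variances, then extract all four limits by elementary asymptotic analysis of a single ratio.

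First I would pin down the identification of $a_1$ and $a_2$ as variances. A centered Gaussian with variance $v$ has density $\frac{1}{\sqrt{2\pi v}} e^{-x^2/(2v)}$, so matching exponents gives $v = 1/(2a)$ in each case. For the MFG, inserting the explicit value $a = (-1 + (1+2\sigma^4\beta)^{1/2})/\sigma^4$ yields $v = a_1$. For the BRS, the exponent coefficient is $2\beta/(2+\sigma^2)$, producing $v = (2+\sigma^2)/(4\beta) = a_2$. Both quantities are well-defined and positive for $\sigma > 0$ and $\beta > 0$ (and the $\beta = 0$ cases are handled by continuous extension when needed).

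Next I would record the ratio in a single convenient form,
\[
\frac{a_2}{a_1} \;=\; \frac{(2+\sigma^2)\bigl(-1 + (1+2\sigma^4\beta)^{1/2}\bigr)}{2\beta\sigma^4},
\]
and read off the four limits by Taylor expansion or leading-order asymptotics of the square root. When the argument $2\sigma^4\beta$ is small (either $\sigma^2\to 0$ with $\beta$ fixed, or $\beta\to 0$ with $\sigma$ fixed), expand $(1+2\sigma^4\beta)^{1/2} = 1 + \sigma^4\beta + O(\sigma^8\beta^2)$; the leading term cancels the $-1$ and the $\sigma^4\beta$ factor cancels the denominator, leaving $(2+\sigma^2)/2$, which gives \eqref{eq:quad_lim1} and \eqref{eq:quad_lim3}. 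When the argument is large (either $\sigma^2\to\infty$ or $\beta\to\infty$), use $(1+2\sigma^4\beta)^{1/2} = \sigma^2(2\beta)^{1/2}(1+o(1))$; substituting gives $a_2/a_1 \sim (2+\sigma^2)/(\sigma^2(2\beta)^{1/2})$, from which \eqref{eq:quad_lim2} and \eqref{eq:quad_lim4} fall out directly.

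There is no real obstacle here, since the proof is purely computational once the variances are identified; the only point that requires any care is tracking which variable is being sent to its limit so that the cancellations in the numerator and denominator are done in the correct order, in particular to avoid spurious $0/0$ or $\infty/\infty$ ambiguities. Writing the ratio in the displayed form above makes each of the four expansions a one-line argument.
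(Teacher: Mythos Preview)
Your proposal is correct and follows essentially the same approach as the paper: write the ratio $a_2/a_1$ in the displayed form and extract the limits by Taylor expansion of $(1+2\sigma^4\beta)^{1/2}$ for small argument and leading-order asymptotics for large argument. Your organisation, grouping the four limits into the two cases small versus large $2\sigma^4\beta$, is slightly tidier than the paper's presentation but not materially different.
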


\begin{proof}
    The first part of this proof is trivial from the previous propositions. Now 
    \[ \frac{a_2}{a_1} = \frac{(2 + \sigma^2) \left( (1 + 2 \sigma^4 \beta)^{1/2} - 1 \right)}{2 \sigma^4 \beta} \, . \]
    Using a Taylor expansion of $(1 + x)^{1/2}$ around $x = 0$ gives behaviour for small $\sigma^2$ i.e.
   \[ \frac{a_2}{a_1} = \frac{(2 + \sigma^2) (\sigma^4 \beta + o(\sigma^4))}{2 \sigma^4 \beta} \, . \]
    Hence $\lim_{\sigma^2 \to 0} \frac{a_2}{a_1} = \lim_{\sigma^2 \to 0} \frac{2 + \sigma^2}{2} = 1$. The other limit can be simply calculated
    \[ \begin{aligned}
            \lim_{\sigma^2 \to \infty} \frac{a_2}{a_1} & = \lim_{\sigma^2 \to \infty} \frac{(2 + \sigma^2) (1 + 2 \sigma^4 \beta)^{1/2}}{2 \sigma^4 \beta} = \lim_{\sigma^2 \to \infty} \frac{(2 + \sigma^2) (2 \beta)^{1/2} \sigma^2}{2 \sigma^4 \beta} = \frac{1}{(2 \beta)^{1/2}} \, .
        \end{aligned} \]
    The limits as $\beta \to 0,\infty$ for fixed $\sigma$, follows from straight forward calculations.
\end{proof}

This result is an important first glimpse at how the behaviour of the BRS and MFG may vary, as well as the importance certain parameters play in the difference. The limit in~\eqref{eq:quad_lim1} shows that the existence of noise is vital to see any difference between the two models. However, as soon as there is noise, its effect on the relative difference plays a less important role than the strength of the quadratic potential, this can be seen in~\eqref{eq:quad_lim2} and~\eqref{eq:quad_lim4}. Specifically the limit~\eqref{eq:quad_lim4} shows that the relative difference between the variances of the two distributions grows like $\beta^{\frac{1}{2}}$, which means the BRS distribution reacts much more rapidly with changes to the potential strength than the MFG. This suggests that the MFG is more affected by congestion or is a more congestion-averse model than the BRS one. 

At a conceptual level this agrees with the formulation of the MFG and BRS systems. The agents in the BRS are acting myopically, only reacting to the situation as it currently exists, which isn't the case in the MFG. As a result the BRS agents don't `see' the future congestion that will result from their behaviour and hence they move towards the minimum of $\beta x^2$ more rapidly than the MFG agents who do see the future cost of the congestion that results from their behaviour. Therefore, thinking of the stationary solutions as the long time, time-averaged behaviour of the models then the stationary BRS will result in a distribution that appears to take into account the congestion less than the MFG and hence one with a smaller variance. This expectation is confirmed by the result~\eqref{eq:quad_lim4} and it in fact quantifies the extent to which the BRS ignores the congestion compared with the MFG.

For this model we have run a variety of simulations --- both to confirm our numerical methods (see Section~\ref{sec:numerical_sim} for methods) and to visualise how the parameters affect the distributions. Figure~\ref{fig:quad-log} shows the results of these simulations on a bounded domain for a variety of parameter choices. Although the formulation on a bounded domain is slightly different than the one in this section, the same behaviour can be seen. For small $\sigma$ the difference between the models doesn't change much as $\beta$ increases, while for large values of $\sigma$ the BRS model is much more dramatically affected by changes to $\beta$. In both cases the BRS and MFG are more closely aligned when $\beta$ is small. 

\section{Simulations}\label{sec:numerical_sim}
\noindent We conclude with presenting various computational experiments, which illustrate the difference between solutions to the BRS and MFG for different choices running costs and potentials. 

\subsection{Solving the stationary BRS and MFG}
Solutions to the stationary BRS~\eqref{eq:brssystem} can be computed by finding the zeros of the function $G_{Z,x}(m)$ at every discrete grid point $x$ on a grid given $Z$. To compute the roots of $G_{Z,x}$ at every grid point $x$ we use a Newton-Raphson method. Then, having found $m_Z(x)$ for a particular value of $Z$, we can differentiate the implicit formula $m_Z = \frac{1}{z} e^{- \frac{2}{\sigma^2} h(x,m_Z)}$ with respect to $Z$ and use a Newton-Raphson method to find $Z$ such that $\Phi(Z) = \int_{\Omega}m_Z~dx = 1$. In practice this means iterating between the two Newton-Raphson methods: first finding $m_{Z_n}$, then computing $Z_{n+1}$, then recomputing $m_{Z_{n+1}}$ and repeating until convergence.\\
The solution to the stationary MFG~\eqref{eq:xu_mfg} are found using an iterative procedure. Given an admissible initial iterate $m^l$, $l=0$ we solve the HJB equation~\eqref{eq:xu_mfg_pde2} to obtain $u^l$ and $\lambda^l$. Note that we include the constraint~\eqref{eq:xu_mfg_ic2} via a Lagrange multiplier. In the final step of the iteration we update the distribution of agents by solving the FPE~\eqref{eq:xu_mfg_pde1} using $u^{l+1}$ to obtain $m^{l+1}$. This procedure is repeated until convergence. Note that we sometimes perform a damped update
\begin{align*}
u^l = \omega u^{l-1} + (1-\omega) v^l   \text{ and } m^l = m^{l-1} + (1-\omega) q^l 
\end{align*}
where $v^l,q^l$ are the undamped solutions of each iteration process. This damping helps to ensure convergence. Solutions to the HJB and the FPE are obtained by using an $H^1$ conforming finite element discretisation.

\begin{figure}[b]
     \centering
    \includegraphics[width=0.7\textwidth]{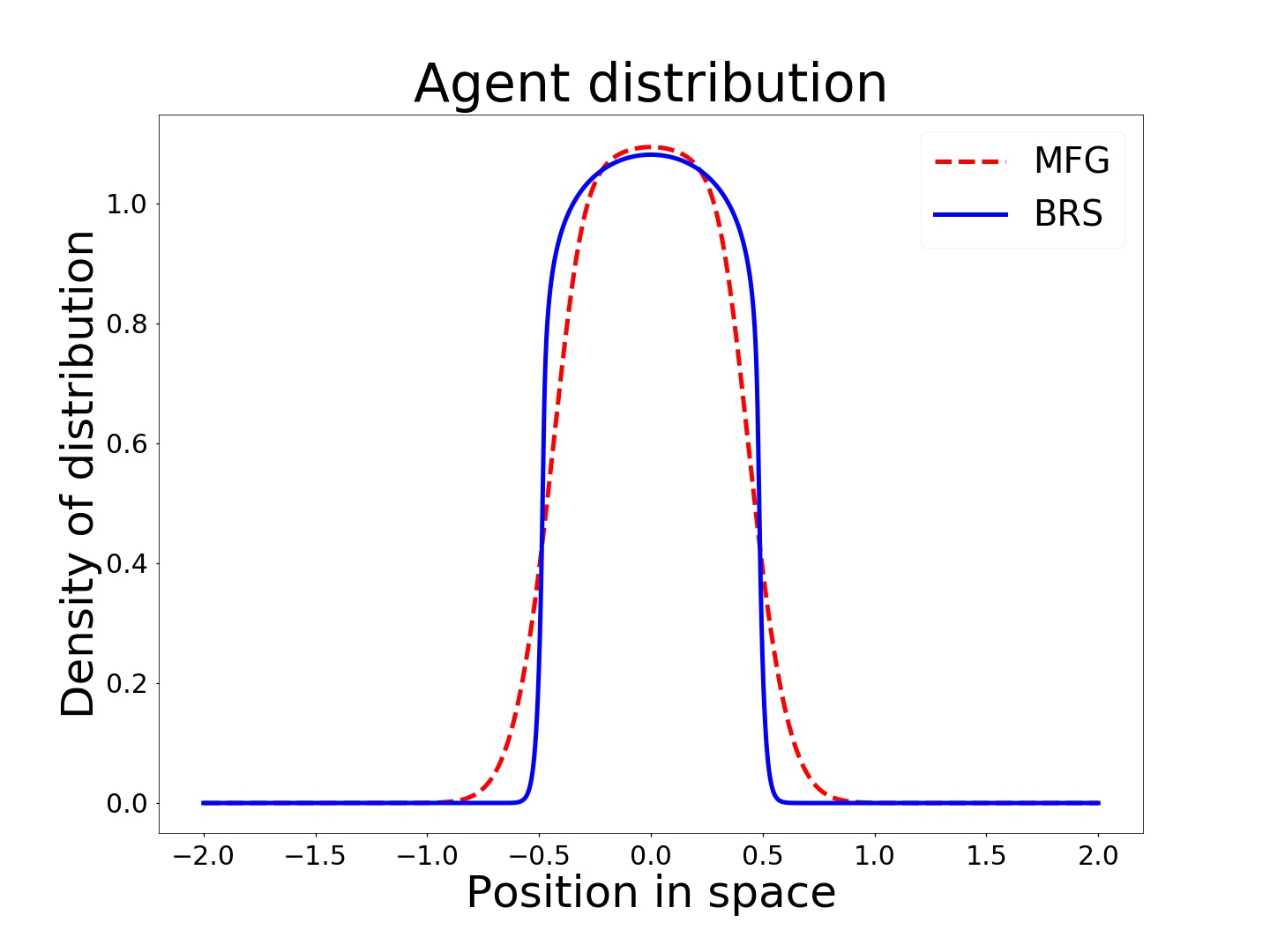}
    \caption{Simulations of BRS and MFG with $h(x,m) = m^{10} + 10 x^2$}
    \label{fig:quad-power}
\end{figure}

We noticed that in many of the simulations performed the ``cost'' associated to the MFG was higher than the ``cost'' associated with the BRS. At first this sounds counter-intuitive as the BRS (in the dynamic case) is a sub-optimal approximation of the MFG. However, as explained in Section~\ref{sec:stat_prob}, the ``cost'' function $u$ is actually the long-time average difference between the cost and the space-average cost, whereas the stationary BRS cost is the equilibrium of the competitive minimisation of \eqref{eq:Estat}. Therefore the MFG cost will always be centred around 0 while the BRS cost could be above or below it. As a result, it was not clear that comparing the ``costs'' of the two models is especially useful and hence we have solely focussed on comparing the distribution of agents.

\begin{figure}[t]
    \begin{subfigure}{0.49 \textwidth}
	    \centering
	    \includegraphics[width=\linewidth]{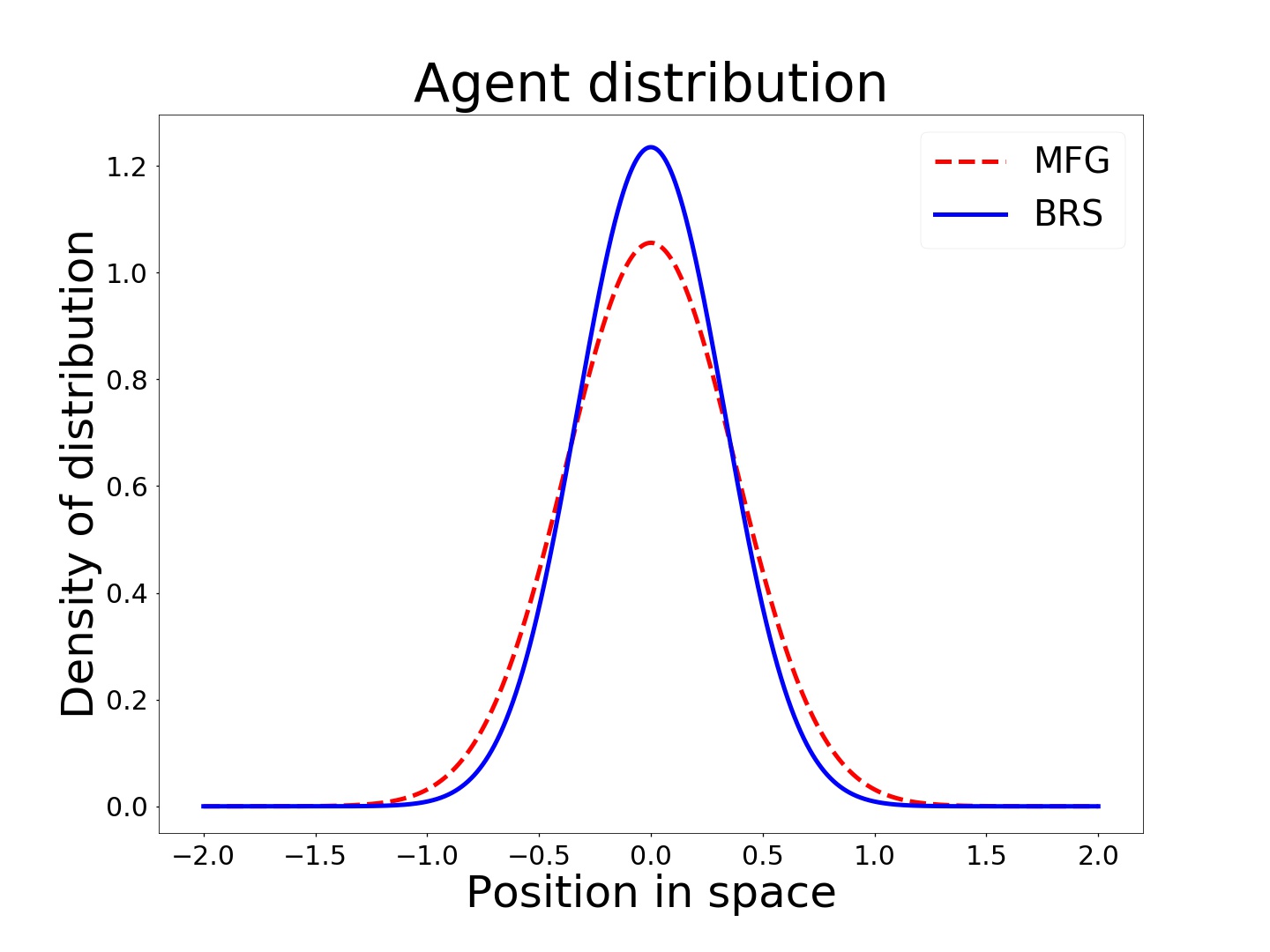}
	    \caption{$m_{\max} = 10$}
	    \label{fig:quad-max-10}
    \end{subfigure}
    \begin{subfigure}{0.49 \textwidth}
	    \centering
	    \includegraphics[width=\linewidth]{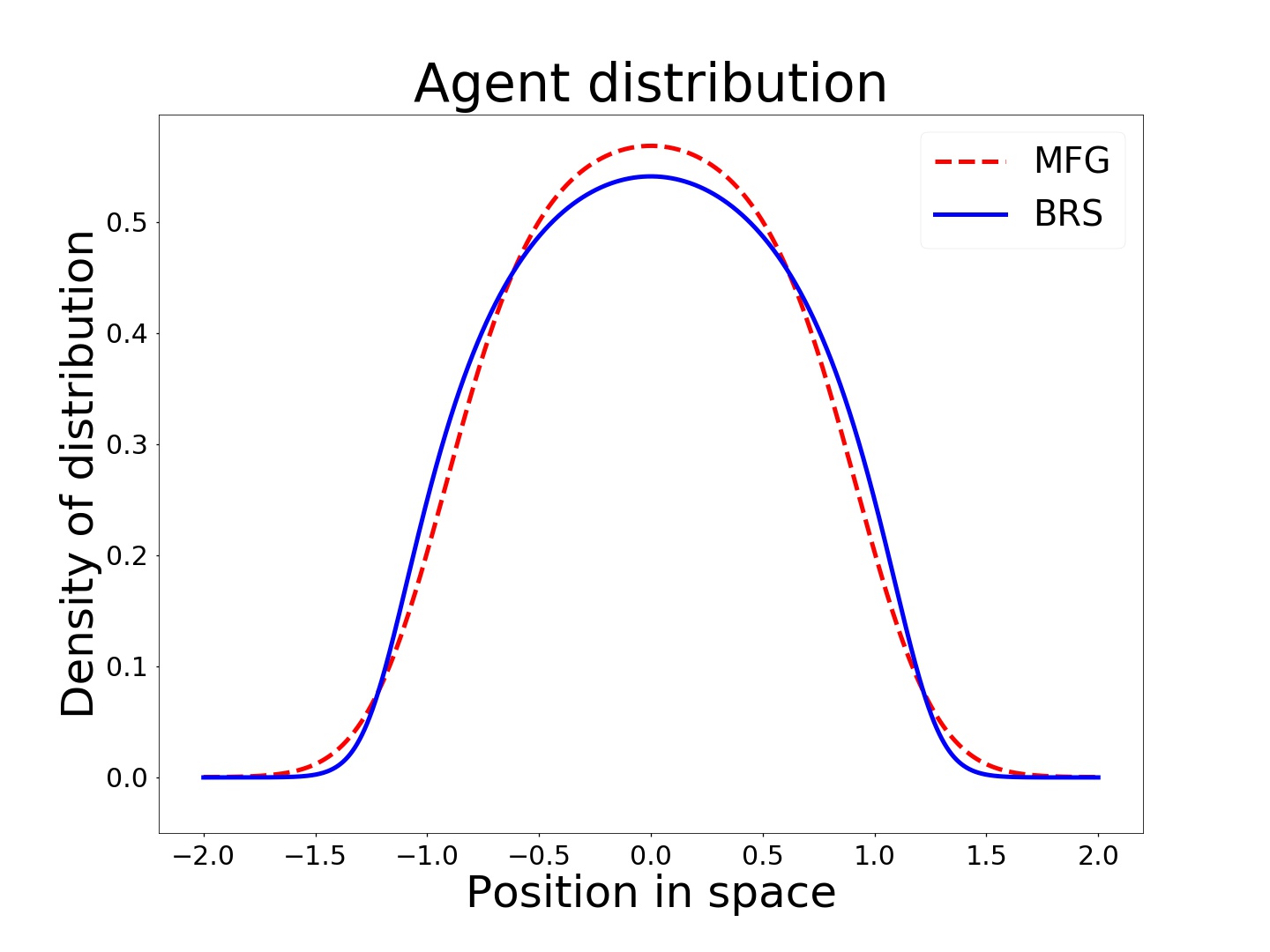}
	    \caption{$m_{\max} = 1$}
	    \label{fig:quad-max-1}
    \end{subfigure}
    \caption{Simulations of BRS and MFG with $h(x,m) = \frac{1}{m_{\max} - m} + x^2$}
    \label{fig:quad-max}
\end{figure}

\subsection{Single well potential}

In the first examples we investigate the behavior of solutions to both models for cost functionals of the form ${h(x,m) = F(m) + \beta x^2}$, using different functions $F$ and parameters $\beta$. We also analyse how the noise level $\sigma$ affects the two stationary states. Note that the case $F(m) = \log(m)$ was already discussed in Section~\ref{sec:quad potential}. We are particularly interested how penalising congestion by considering functions $F(m)$ of the form $F(m) = m^{\alpha}$ for some $\alpha > 0$ or $F(m) = \frac{1}{m_{\max} - m}$ for some $m_{\max} > \frac{1}{\Omega}$ affect solutions. The last choice introduces a `barrier' above which the density can not exceed. Using such a congestion term is more realistic from a modelling perspective than either the logarithmic or power--law term as it forces densities to stay below a certain physical reasonable limit. We will observe a similar dependence on the parameters $\beta,\sigma$ compared with the logarithmic congestion term --- and in fact the same can be said for all of our simulations. So for all values of $\sigma$ the MFG model responded less to changes in the strength of the potential $\beta$ compared with the BRS model, however the difference is most pronounced as $\sigma$ increases and again it may be expected that as $\sigma \to 0$ that the two models align very closely.

The most notable difference between the use of a logarithmic congestion term and a power--law congestion term is a difference in the shape of the distribution, particularly the flatness of the peak of the distribution as shown in figure~\ref{fig:quad-power}. Importantly this characteristic is shared by both the MFG and the BRS, suggesting the congestion terms $F(m)$ affect both models in similar ways. When looking at congestion terms of the form $F(m) = \frac{1}{m_{\max} - m}$, we can find regimes where the behaviour is similar to the logarithm, or more like a vastly exaggerated version of the power--law congestion. When $m_{\max}$ is large, as in figure~\ref{fig:quad-max-10}, the resulting distribution for both models looks like a normal distribution, similar to the case with logarithmic congestion. In fact when $m_{max}$ is very large, a formal asymptotic analysis using a Taylor expansion around $m_{\max}$ can be made which shows 
\[\frac{1}{m_{\max} - m} \approx \frac{1}{m_{\max}} \, .\]
Therefore the BRS  satisfies the equation
\[ m \approx \frac{1}{Z} e^{- \frac{2}{\sigma^2} \frac{ \beta m_{\max} x^2 - 1}{m_{\max}}} \approx \frac{1}{Z} e^{- \frac{2 \beta}{\sigma^2} x^2} \, , \]
with a normalisation constant $Z = \int_{\Omega} e^{- \frac{2\beta}{\sigma^2} x^2}$, which corresponds, on the whole space $\R$, to a normal distribution with zero mean and variance $\frac{\sigma^2}{4 \beta}$. The variance of the BRS solution found here differs from the logarithmic congestion case by $\frac{2}{4\beta}$. A similar analysis shows that when $m_{\max} \to \infty$ the MFG approximately resembles a normal distribution with zero mean and variance 
$\frac{\sigma^2}{2 \beta}$. In summary, solutions to the MFG and the BRS are both normal distributions with zero mean and with variances whose relative difference is $\frac{1}{2}$.

However, when $m_{\max}$ is reduced, as in figure~\ref{fig:quad-max-1}, the peak flattens out in a similar but exaggerated way compared to the power--law congestion. It is interesting to note that the BRS seems to respond more to the change in $m_{\max}$ than the MFG, this is in contrast to the role that $\sigma$ plays in the two models where the MFG responds more to changes in $\sigma$ compared with the BRS.

\begin{figure}[t]
    \begin{subfigure}{0.49 \textwidth}
        \centering
        \includegraphics[width=\linewidth]{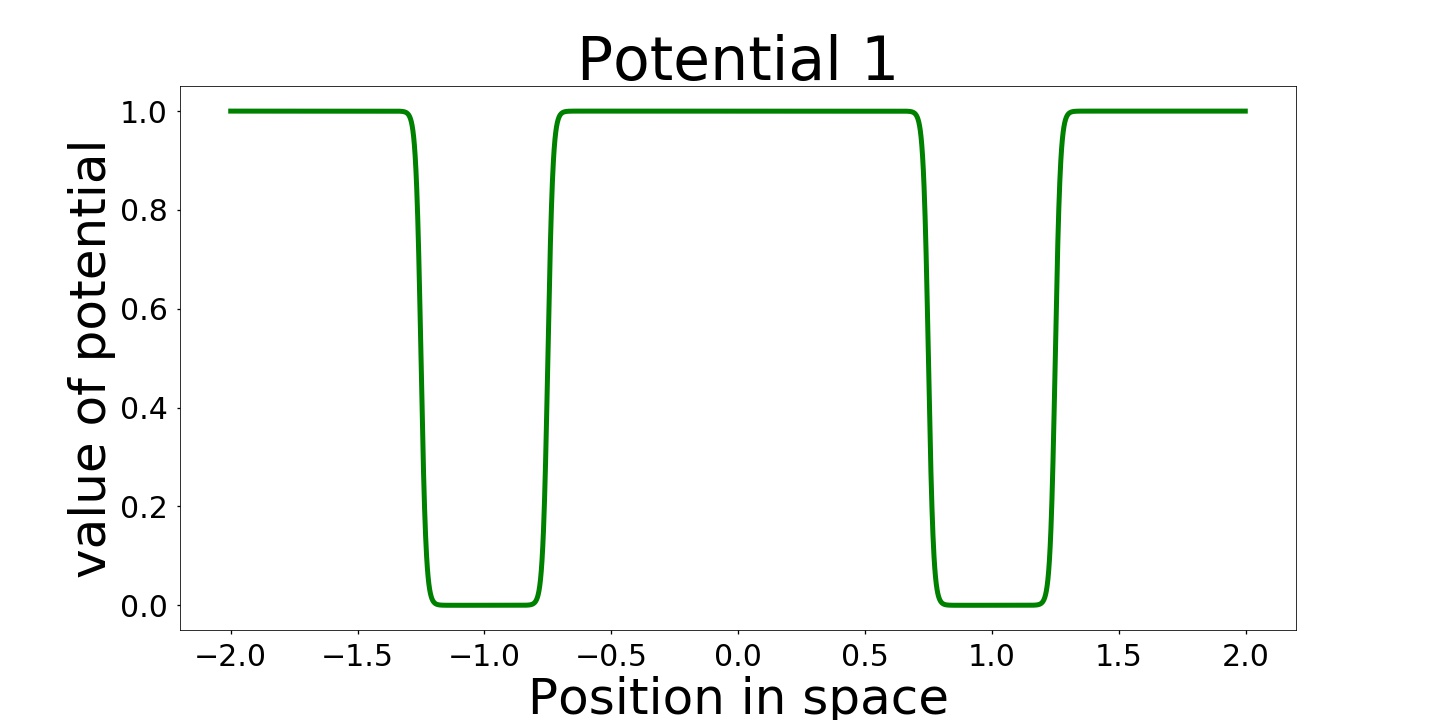}
        \caption{Double well potential for Figure~\ref{fig:double well 1}}
        \label{fig:double pot 1}
    \end{subfigure}
    \begin{subfigure}{0.49 \textwidth}
        \centering
        \includegraphics[width=\linewidth]{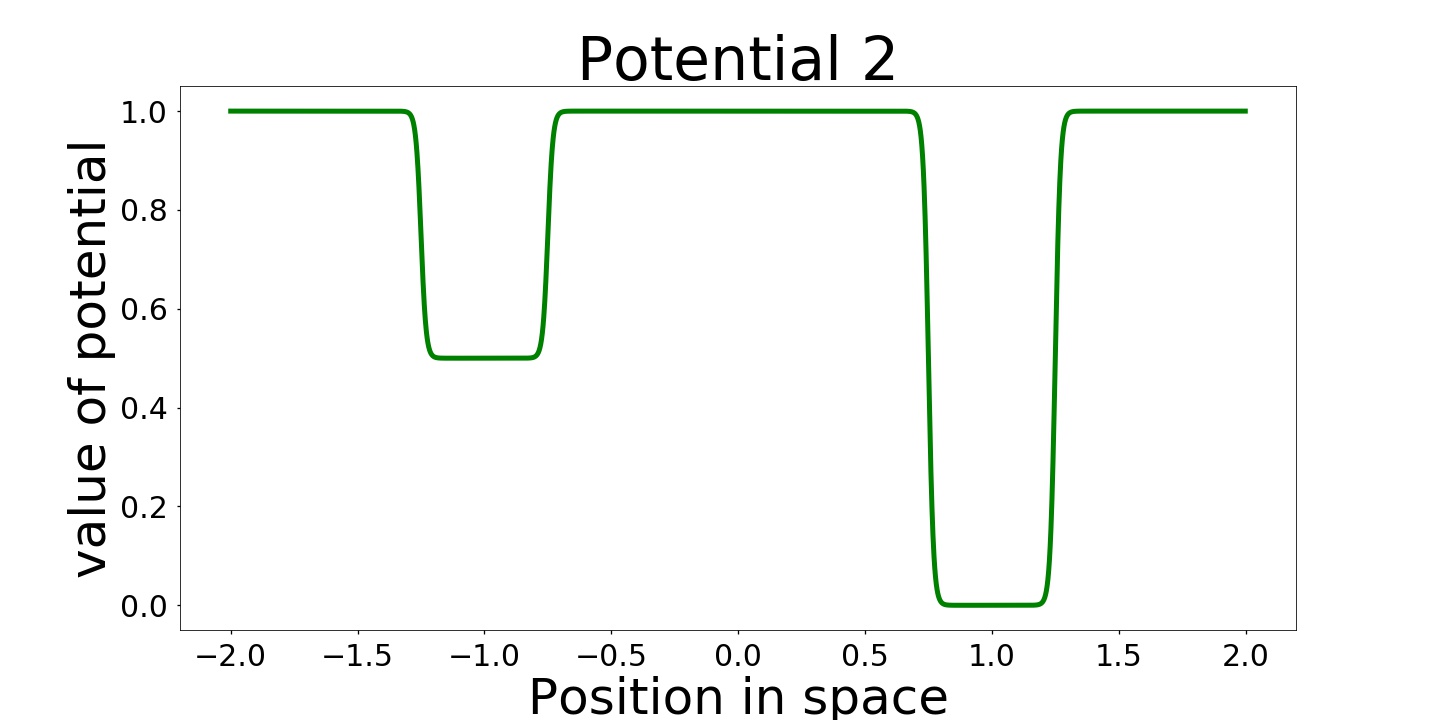}
        \caption{Double well potential for Figure~\ref{fig:double well 2}}
        \label{fig:double pot 2}
    \end{subfigure}
    
    \bigskip
    
    \begin{subfigure}{0.49 \textwidth}
        \centering
        \includegraphics[width=\linewidth]{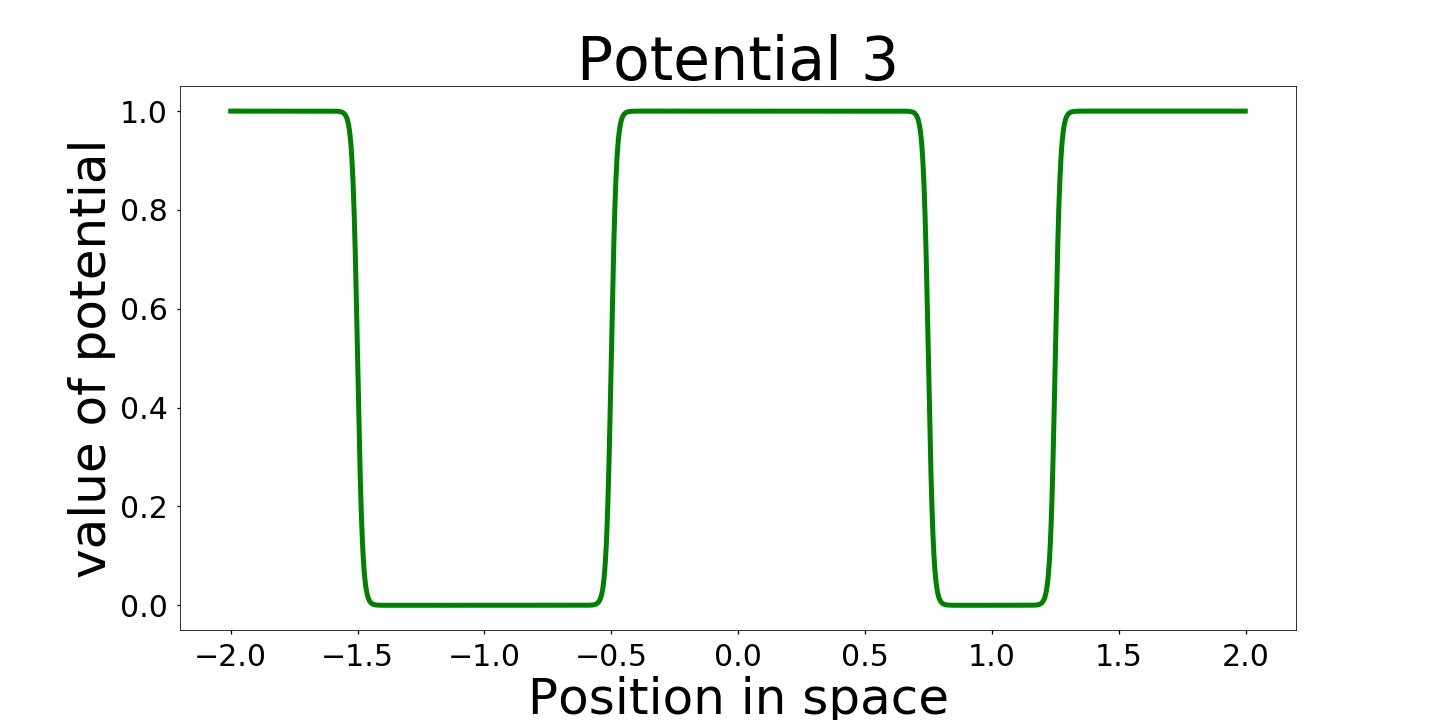}
        \caption{Double well potential for Figure~\ref{fig:double well 3}}
        \label{fig:double pot 3}
    \end{subfigure}
    \begin{subfigure}{0.49 \textwidth}
        \centering
        \includegraphics[width=\linewidth]{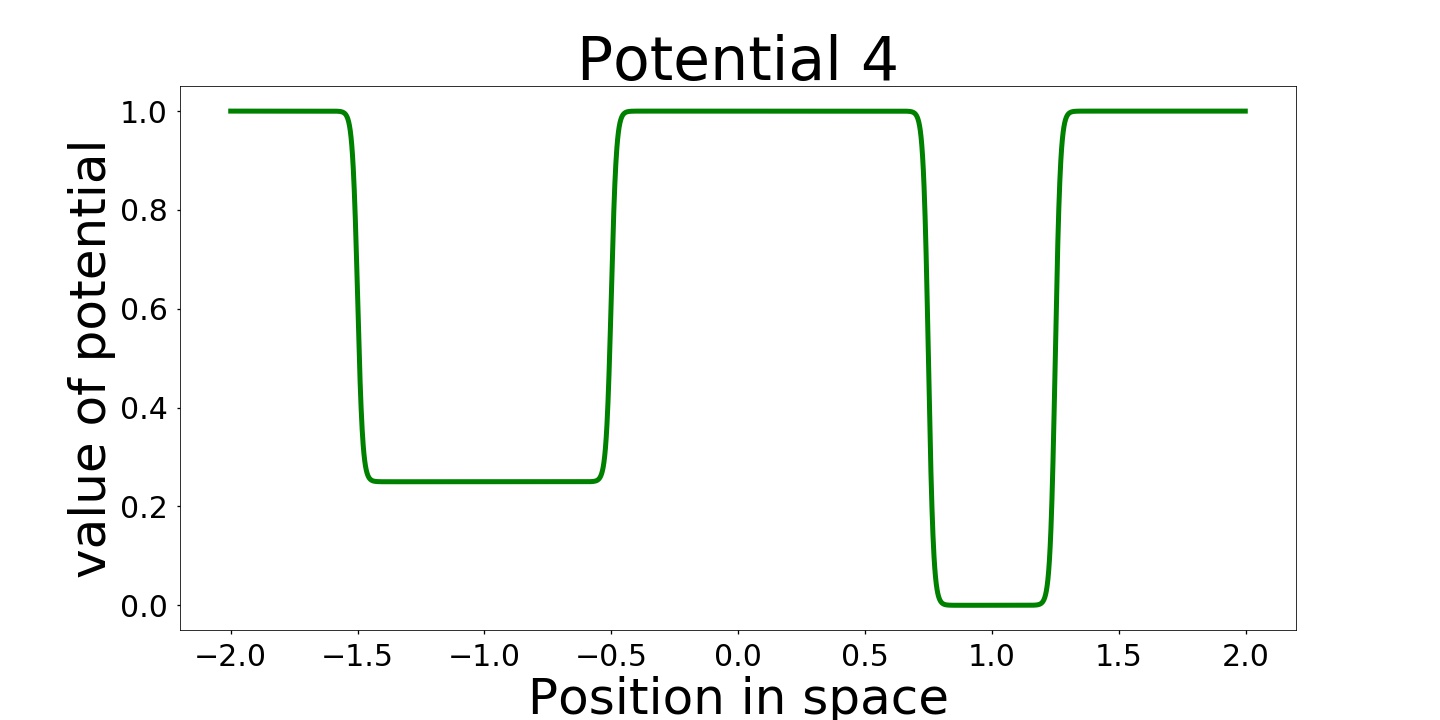}
        \caption{Double well potential for Figure~\ref{fig:double well 4}}
        \label{fig:double pot 4}
    \end{subfigure}
    
    \bigskip
    \centering
    \begin{subfigure}{0.49 \textwidth}
        \centering
        \includegraphics[width=\linewidth]{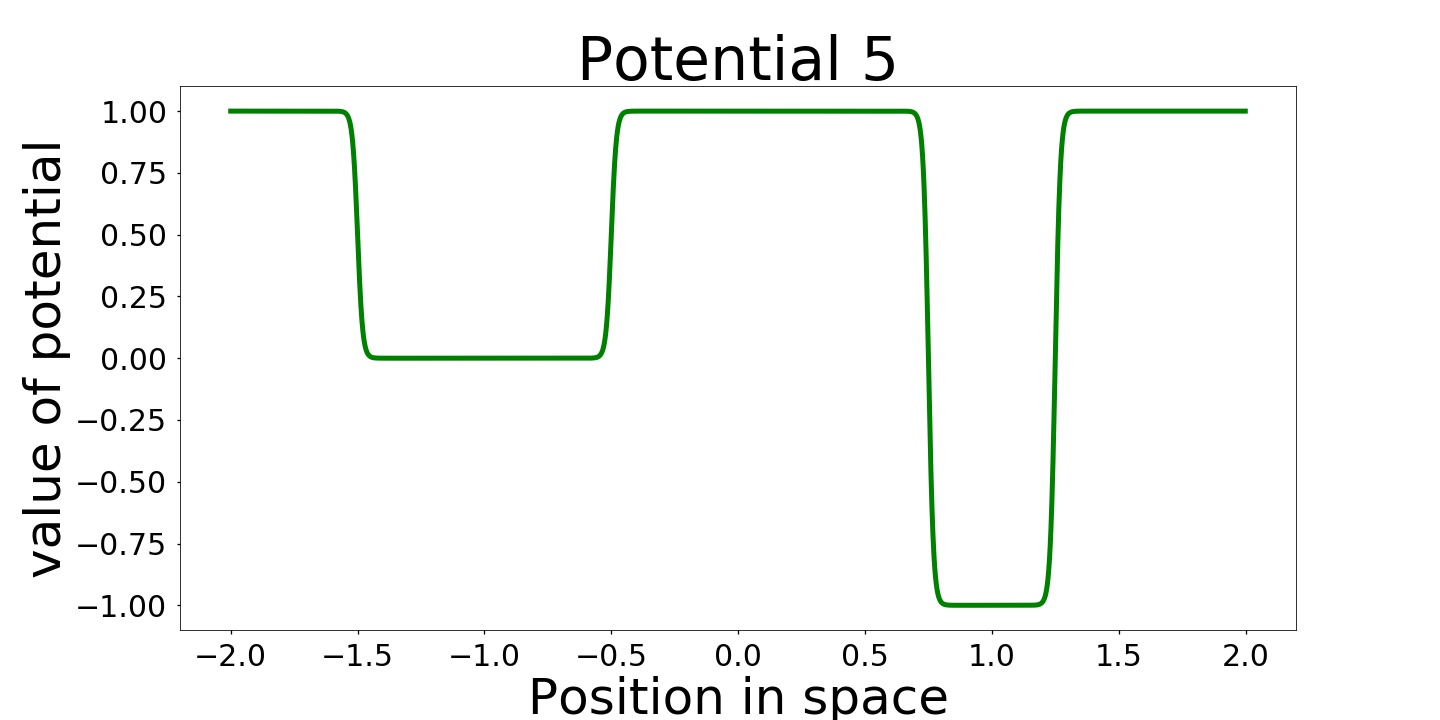}
        \caption{Double well potential for Figure~\ref{fig:double well 5}}
        \label{fig:double pot 5}
    \end{subfigure}
    \caption{Double well potentials for simulations}
    \label{fig:double pots}
\end{figure}

\subsection{Double well potential}

The previous subsection has given insight into how the form of the congestion term affects both models. In this section we explore how the potential term affects each model. For this section we will consider costs of the form $h(x,m) = F_1(x) + \log(m)$, where $F_1(x)$ will be a double well potential (see figure~\ref{fig:double pots}). Since the key insight of this section is to understand how varying $F_1$ affects the similarity of solutions to the BRS and MFG models, we have decided not to include results with different congestion terms other than the logarithm. From simulations it can be seen that the effect of changing the congestion term from $\log(m)$ to another term is very similar whether we are considering a single well or a double well.

Our simulations focus on five different double wells, which can be seen in figure~\ref{fig:double pots}. We vary the potentials as follows
\begin{enumerate}[topsep=0pt,itemsep=0pt,partopsep=2pt,parsep=1pt]
    \item same depth, same width,
    \item different depth, same width,
    \item same depth, different width,
    \item approximately similar perimeter,
    \item approximately similar volume.
\end{enumerate}

The simulations with the first two potentials, see figures~\ref{fig:double well 1} and~\ref{fig:double well 2}, where the widths of the two wells are always the same, show that the two models display similar qualitative behaviour. As with the single well potential, as $\sigma$ increases the discrepancy between the two models also increases, with the MFG model being more affected by the level of noise than the BRS. As expected, when the two wells are of equal depth (as in figure~\ref{fig:double well 1}) then both the MFG and BRS attribute equal weight between the wells, while when one well is deeper than the other (as in figure~\ref{fig:double well 2}) both models give more mass to the location of the deeper well. This is true for all values of $\sigma$, although the effect reduces as $\sigma$ increases, as can be seen by comparing figures~\ref{fig:double well 1a} and~\ref{fig:double well 2a} to figures~\ref{fig:double well 1b} and~\ref{fig:double well 2b} respectively.

Up to this point we have seen that the qualitative behaviour of the two models tends to agree. However when we look at double well potentials where the width of the well is varying, we start to observe differences. In figure~\ref{fig:double well 3}, where the wells have the same depth but different width, we see that the BRS still distributes density equally to the two wells. However the MFG model results in a higher density focussed in the wider well than in the narrower well. The reason the width has no effect on the BRS can be seen from studying the implicit equation. In each well we are solving $m = \frac{1}{Z} e^{- \frac{2}{\sigma^2} h(x,m)}$. In our case $h(x,m) = G(x) + F(m)$. Since the potential $G(x)$ is at the same depth in each well then the relative height of the distribution $m$ will be the same in each well. The reason the MFG is affected by the width of the of the well is that in finding the MFG solution we are in fact solving an elliptic equation to find the function $u$, hence at each point $x$ this $u$ will be affected by factors that can't be described by just looking at the value of $h$ at that point. In other words, the BRS depends only on local properties of the cost $h$ whereas the MFG depends also on non-local properties. To understand why the MFG assigns greater density to the wider well we need to look at the underlying optimisation problems related to the MFG and the BRS.

\begin{figure}[t]
	\begin{subfigure}{0.49 \textwidth}
        \centering
		\includegraphics[width=\textwidth]{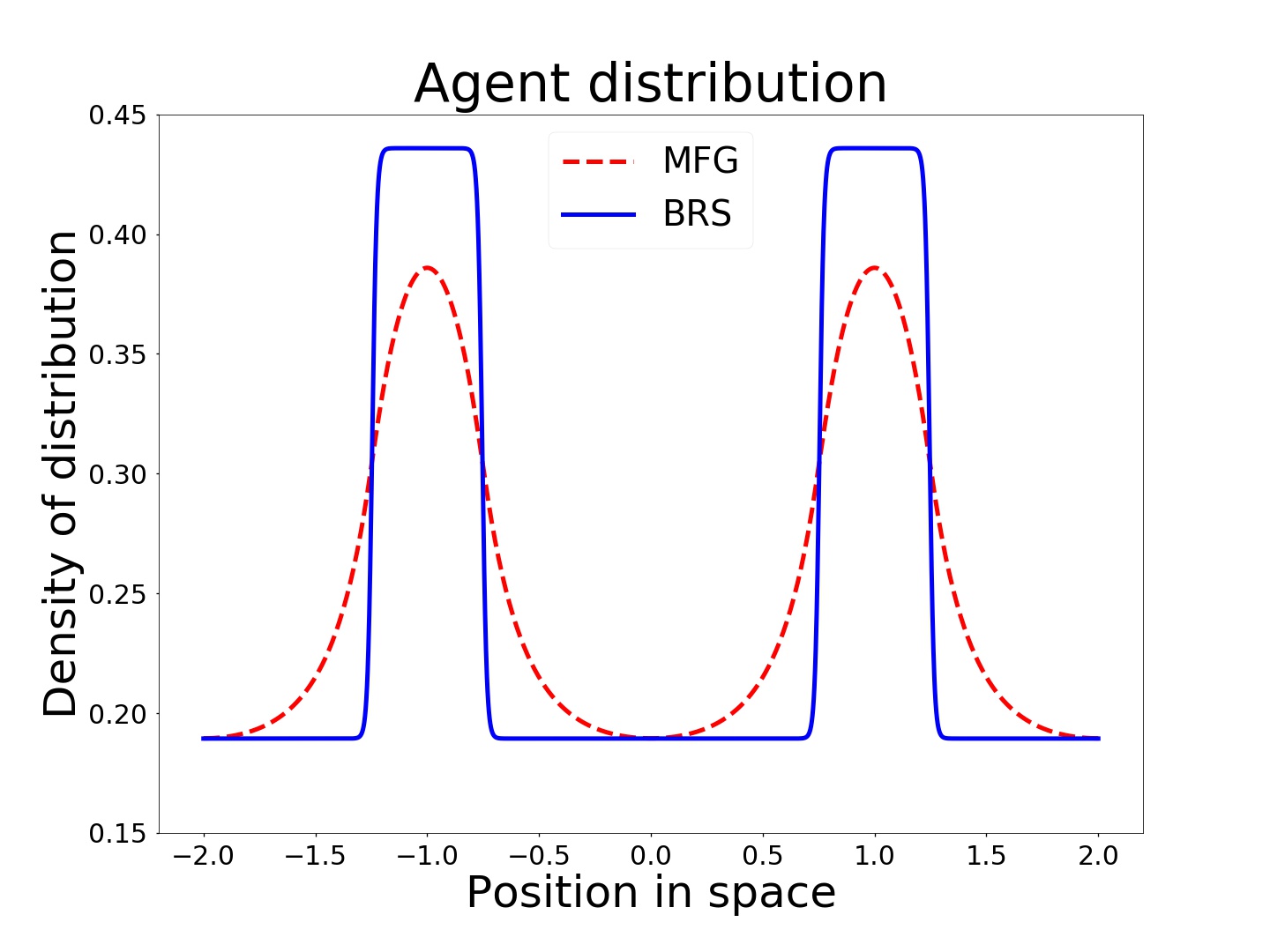}
    	\subcaption{$\frac{\sigma^2}{2} = 0.2$}
    	\label{fig:double well 1a}
	\end{subfigure}
	\begin{subfigure}{0.49 \textwidth}
        \centering
		\includegraphics[width=\textwidth]{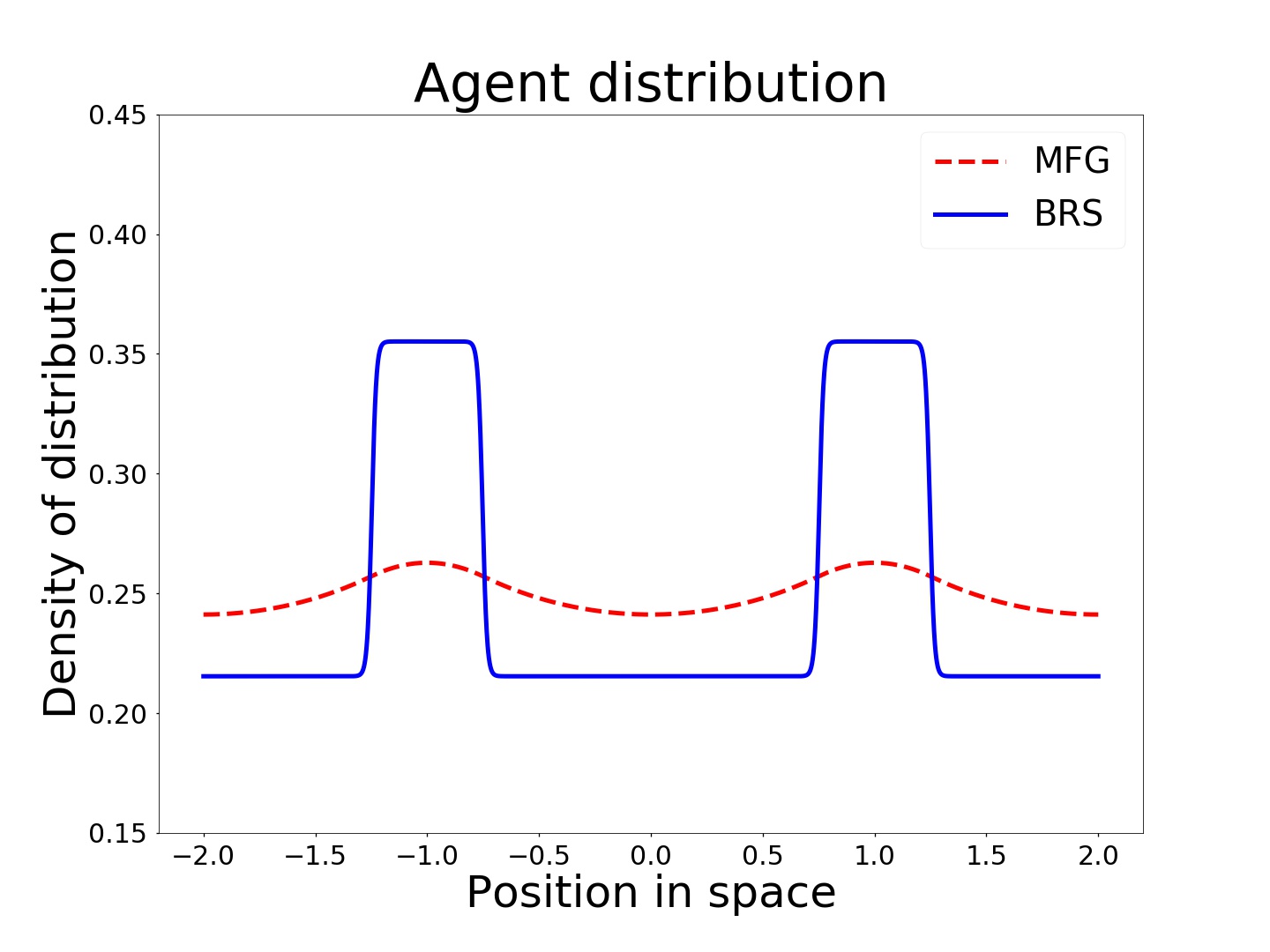}
    	\subcaption{$\frac{\sigma^2}{2} = 1$}
    	\label{fig:double well 1b}
	\end{subfigure}
	\caption{Simulation of BRS and MFG with logarithmic congestion and potential given in figure~\ref{fig:double pot 1}}
	\label{fig:double well 1}
\end{figure}
\begin{figure}[t]
	\begin{subfigure}{0.49 \textwidth}
        \centering
		\includegraphics[width=\textwidth]{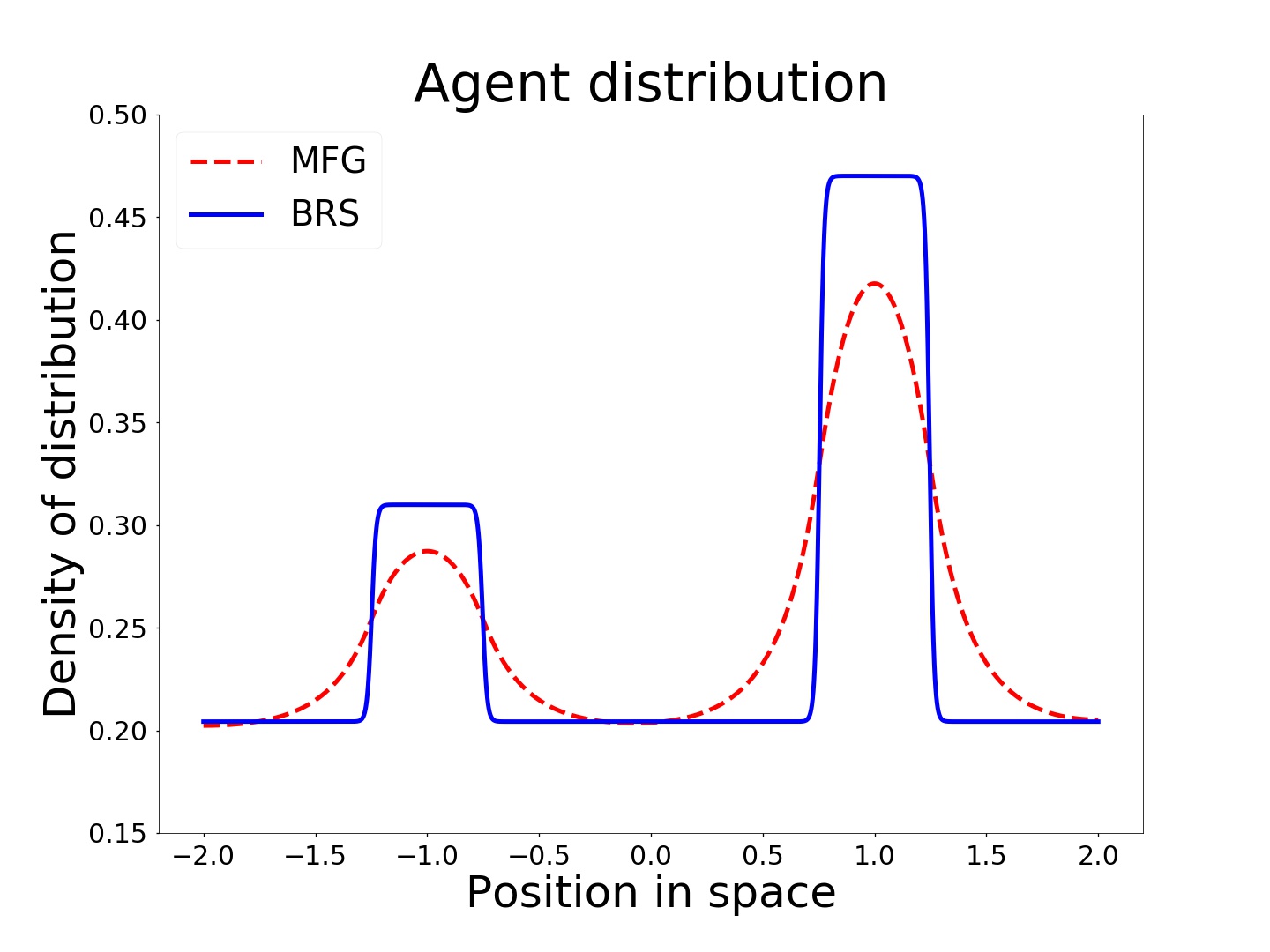}
    	\subcaption{$\frac{\sigma^2}{2} = 0.2$}
    	\label{fig:double well 2a}
	\end{subfigure}
	\begin{subfigure}{0.49 \textwidth}
        \centering
		\includegraphics[width=\textwidth]{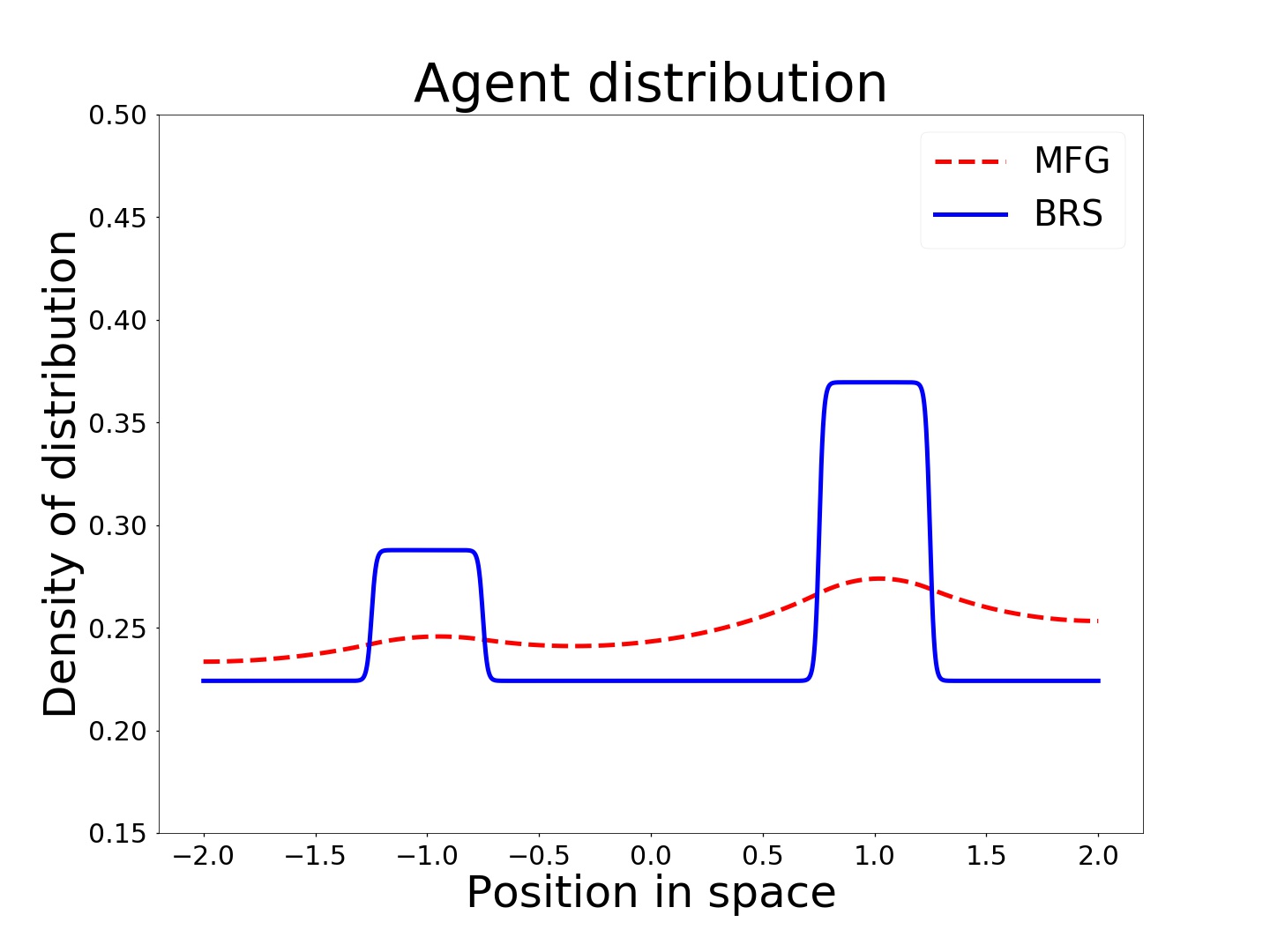}
    	\subcaption{$\frac{\sigma^2}{2} = 1$}
    	\label{fig:double well 2b}
	\end{subfigure}
	\caption{Simulation of BRS and MFG with logarithmic congestion and potential given in figure~\ref{fig:double pot 2}}
	\label{fig:double well 2}
\end{figure}

When considering the optimisation problems related to the dynamic MFG and BRS models and the long-term behaviour of these models, which results in the stationary models, we can see that the BRS model has no anticipation about the future system, while the MFG model does. Therefore agents in the MFG model are willing to incur higher congestion costs in the wider well as they can see that the cost to move out of the well to an area of lower congestion will be higher than the cost incurred for staying in the well (the cost functional being optimised has a quadratic running cost on the control). Since the cost for moving out of the well increases with the width of the well (as the wider the well either the longer an agent has to use their control, or the larger their control has to be), fewer agents are willing to move out of the wider well than the narrower in the long-run. Hence in the stationary case the wider well has a higher density associated to it than the narrower well. In contrast, we see that in going from the MFG to the BRS we renormalise the cost of the control by $\Delta t$ and take $\Delta t \to 0$ so the BRS doesn't consider the cost of moving along the width of the well in order to find an area of lower density. Therefore the BRS agents will not consider the width of the well when deciding whether to remain in it or leave. This further explains why the width of the well has no effect on the relative size of the density in each well for the BRS. 

We have seen that increasing well width affects only the MFG while increasing well depth affects both the MFG and BRS. Now we can balance these effects to create situations in which the two models give completely different results. Figure~\ref{fig:double well 4} involves a double well where the width and depth of the wells differ but the area of the well is the same, while in figure~\ref{fig:double well 5}  the perimeter of the wells was kept the same. In the case of a small noise term, then both the MFG and BRS favour the deeper wells. However with a larger noise term, figure~\ref{fig:double well 4b} shows that there are cases where the wider shallower well is favoured by the MFG while the narrower, deeper well is favoured by the BRS.

\begin{figure}[t]
	\begin{subfigure}{0.49 \textwidth}
        \centering
		\includegraphics[width=\textwidth]{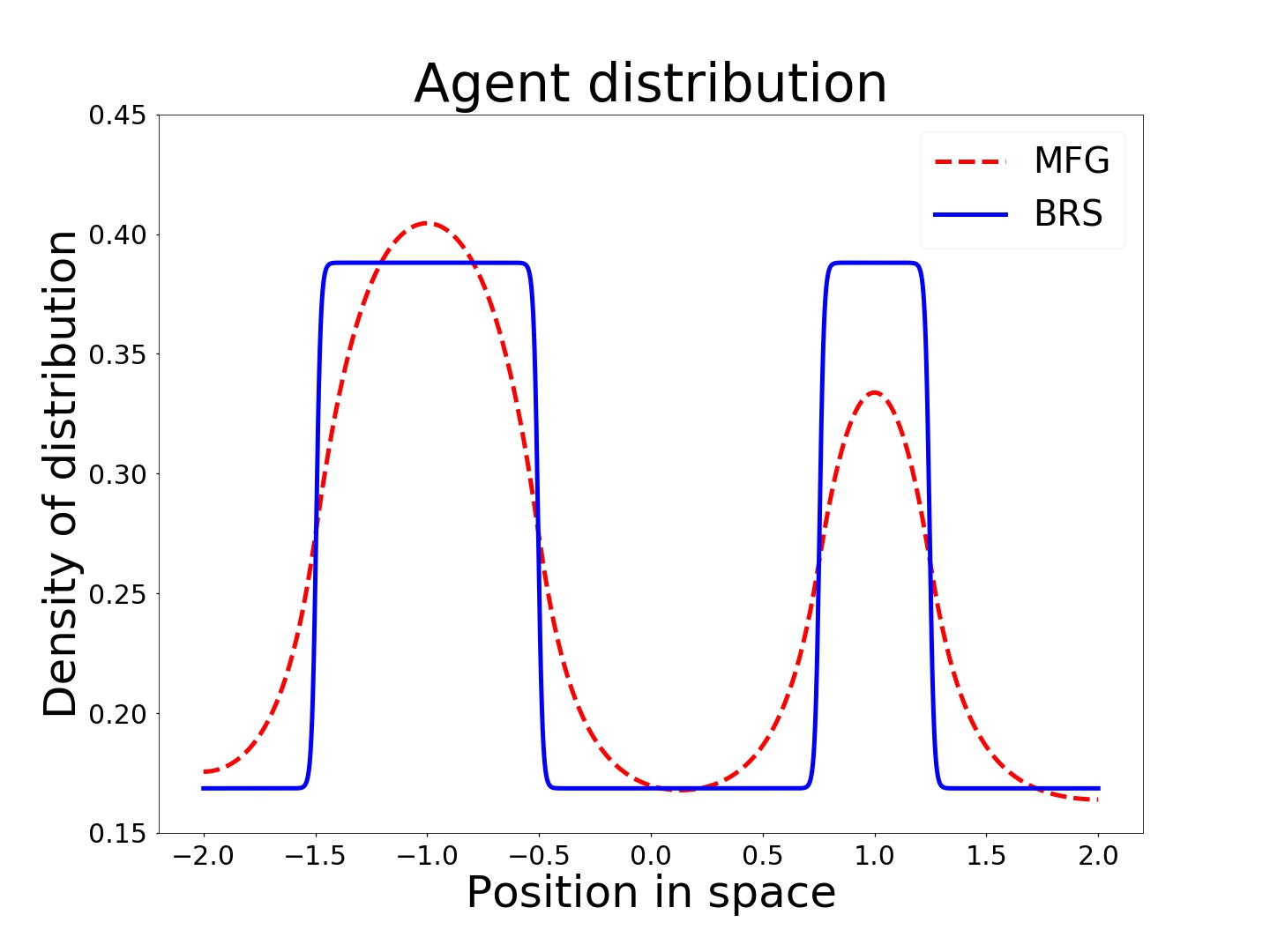}
    	\subcaption{$\frac{\sigma^2}{2} = 0.2$}
    	\label{fig:double well 3a}
	\end{subfigure}
	\begin{subfigure}{0.49 \textwidth}
        \centering
		\includegraphics[width=\textwidth]{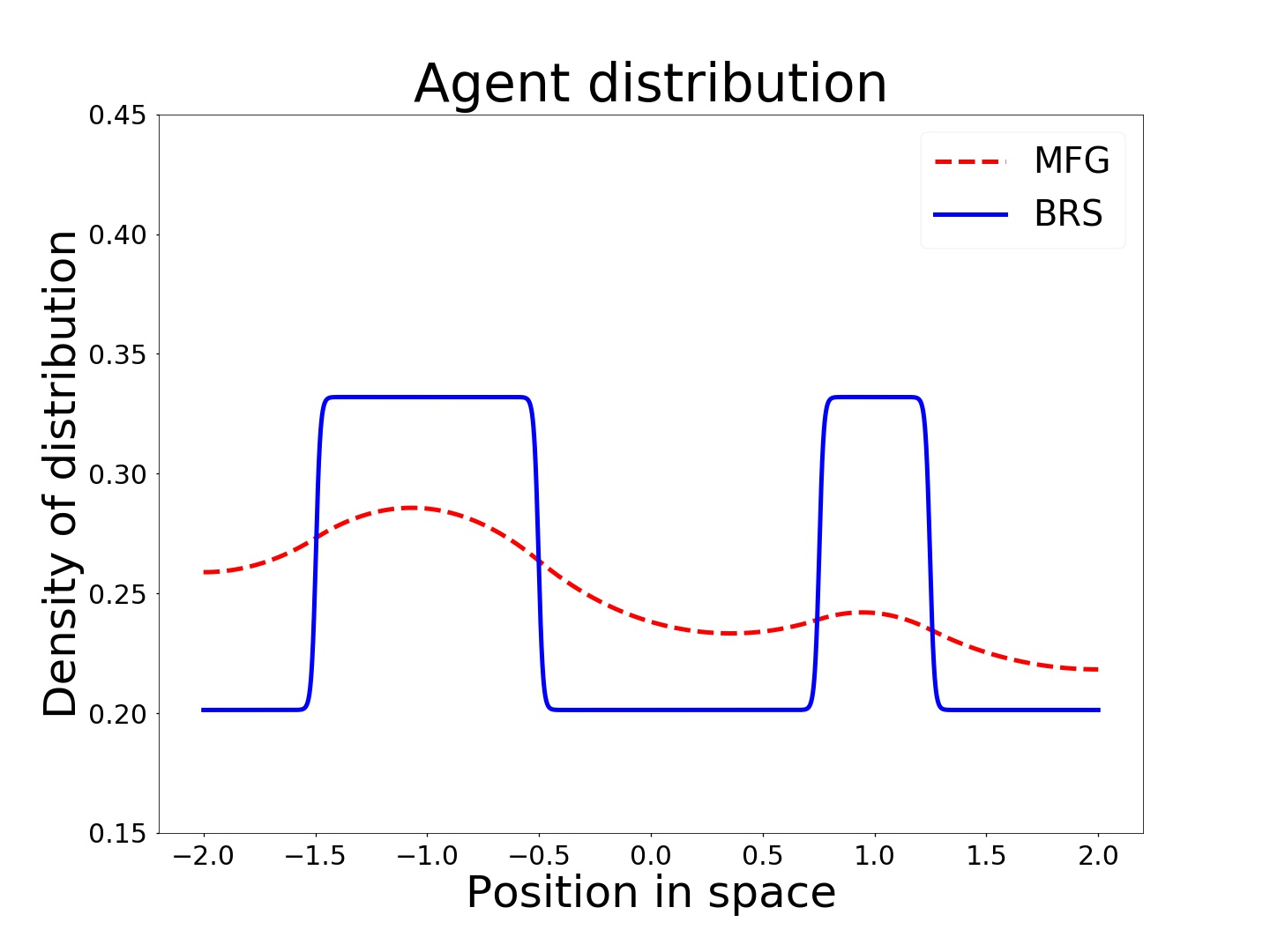}
    	\subcaption{$\frac{\sigma^2}{2} = 1$}
    	\label{fig:double well 3b}
	\end{subfigure}
	\caption{Simulation of BRS and MFG with logarithmic congestion and potential given in figure~\ref{fig:double pot 3}}
	\label{fig:double well 3}
\end{figure}
	
\begin{figure}[t]
	\begin{subfigure}{0.49 \textwidth}
        \centering
		\includegraphics[width=\textwidth]{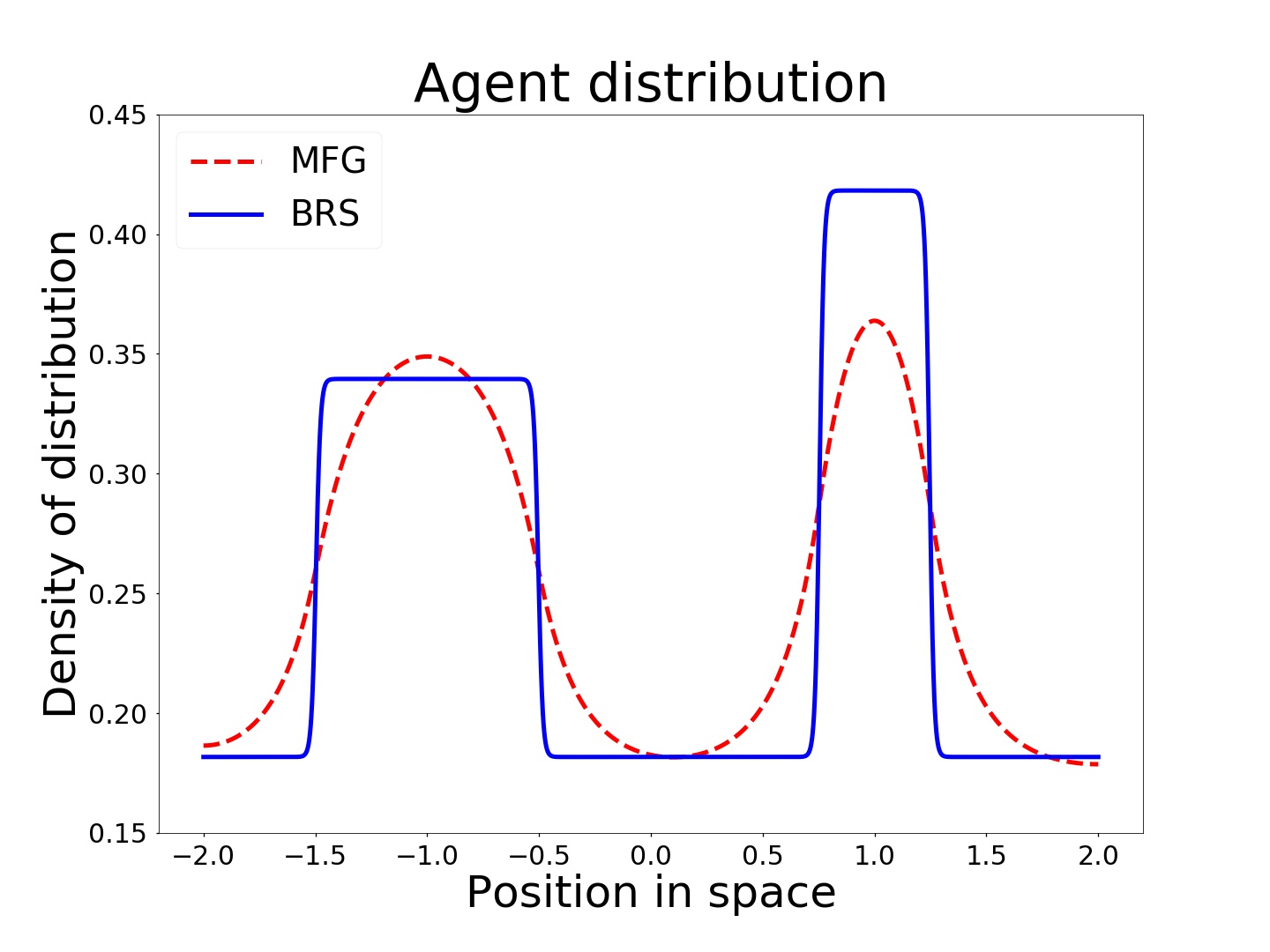}
    	\subcaption{$\frac{\sigma^2}{2} = 0.2$}
    	\label{fig:double well 4a}
	\end{subfigure}
	\begin{subfigure}{0.49 \textwidth}
        \centering
		\includegraphics[width=\textwidth]{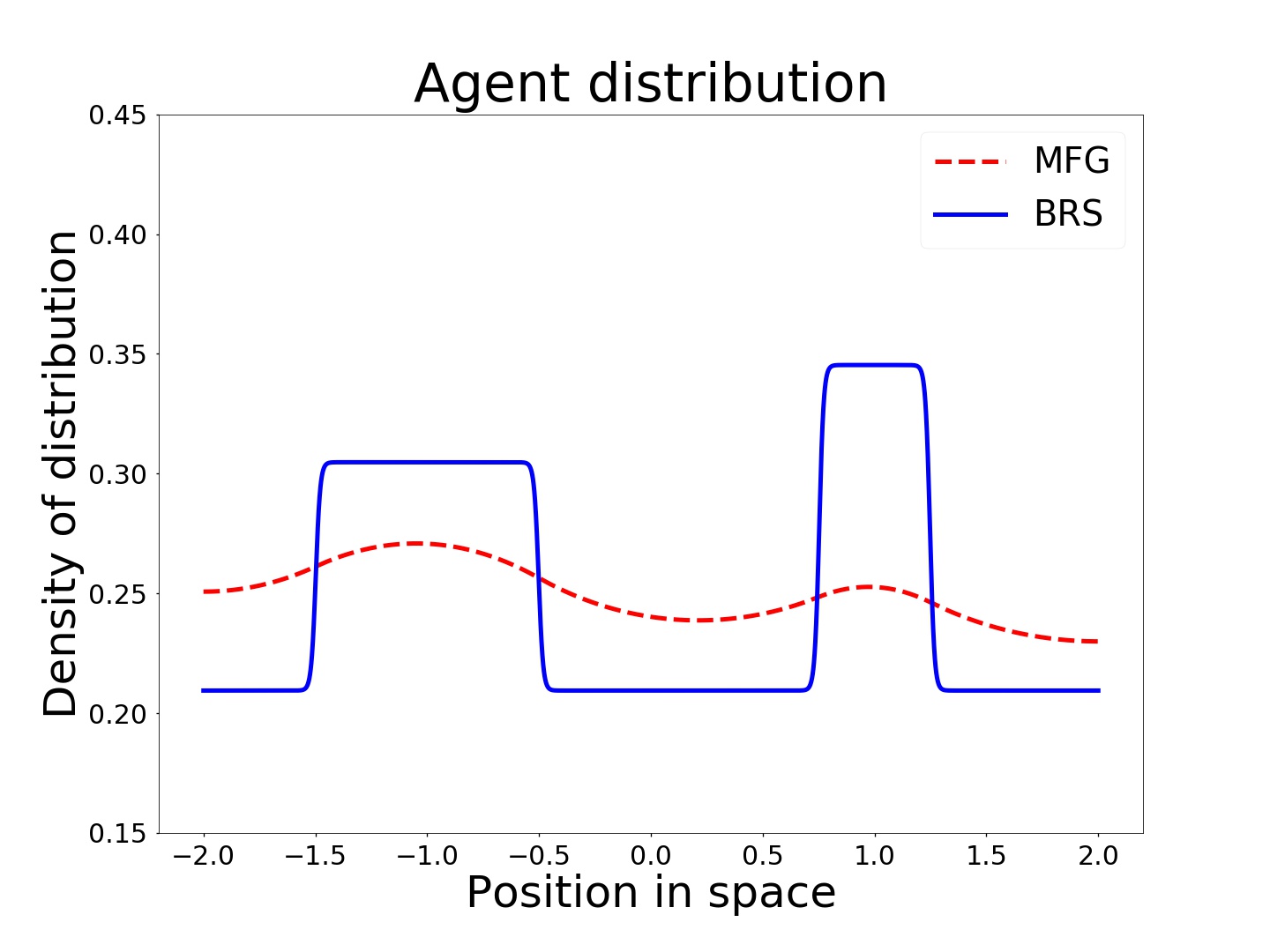}
    	\subcaption{$\frac{\sigma^2}{2} = 1$}
    	\label{fig:double well 4b}
	\end{subfigure}
	\caption{Simulation of BRS and MFG with logarithmic congestion and potential given in figure~\ref{fig:double pot 4}}
	\label{fig:double well 4}
\end{figure}

\section{Conclusion and outlook} \label{sec:conclusion}

\noindent In this paper we have systematically compared two models of interacting multi-agent systems in the stationary case. Through a proof of existence and uniqueness for each model we have seen that the BRS model can be reformulated as an implicit equation. This shows that the BRS model really only depends on local data of the cost function, while the MFG model, the solution of which is given by an elliptic equation, may have non-local dependenicies on the data. The existence and uniqueness proofs were based on the important assumption that the congestion term is increasing. However, the regularity requirements on the MFG data are less strict than those on the BRS data. Finally the proof gave an insight into the dependence of each model on the diffusion coefficient. We want to remark that the strategy of the proof is interesting on its own and that the only similar results presented in ~\cite{Cirant2015}, are based on different assumptions.\\
We supported our analytic results by numerical simulations and investigated the similarities and differences of the MFG and BRS models systematically in various computational experiments. We are planning to extend the analysis and simulations to the dynamic case in the future, and consider cost functions other than linear-quadratic ones.

\begin{figure}[t]
	\begin{subfigure}{0.49 \textwidth}
        \centering
		\includegraphics[width=\textwidth]{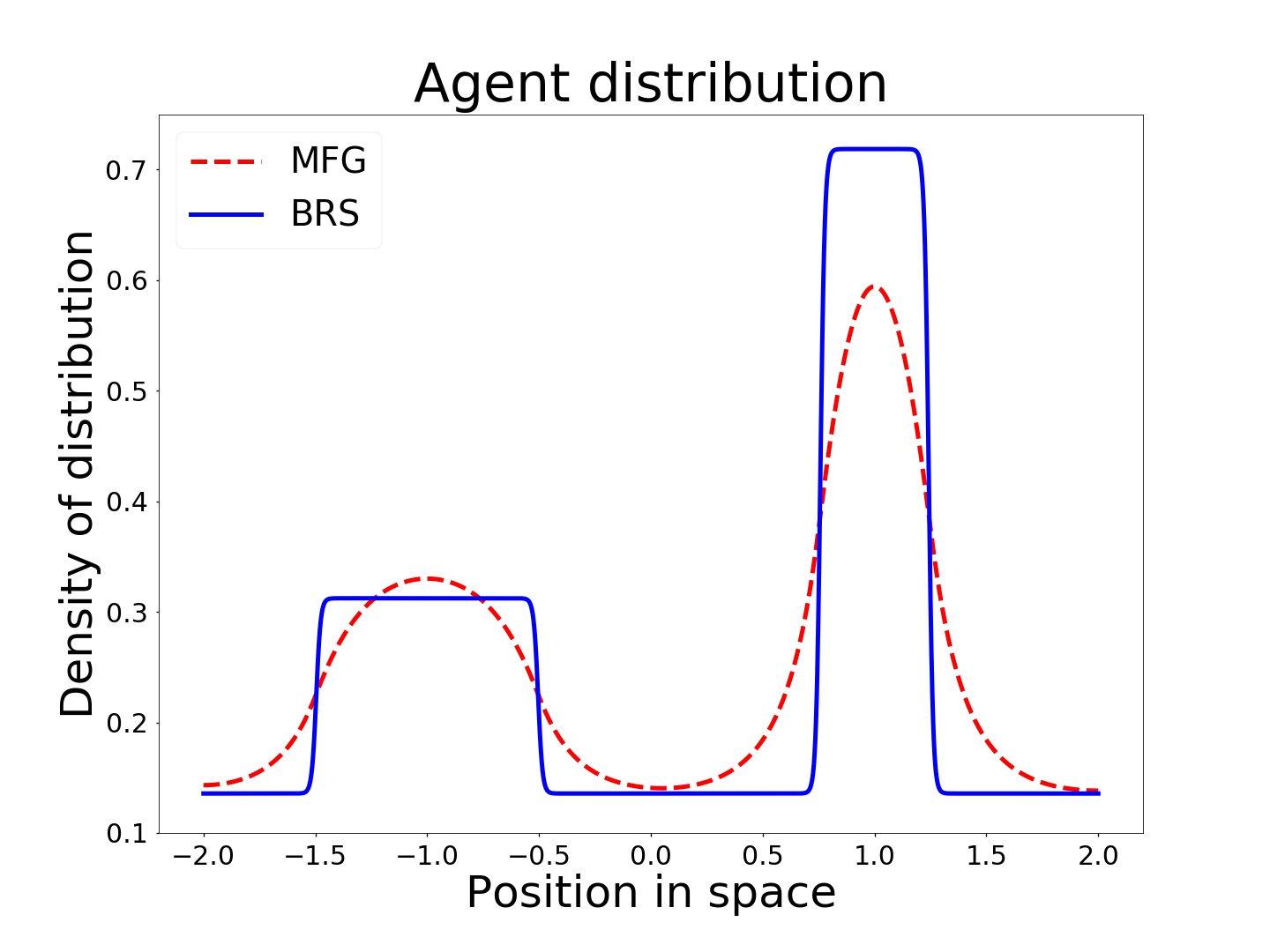}
    	\subcaption{$\frac{\sigma^2}{2} = 0.2$}
    	\label{fig:double well 5a}
	\end{subfigure}
	\begin{subfigure}{0.49 \textwidth}
        \centering
		\includegraphics[width=\textwidth]{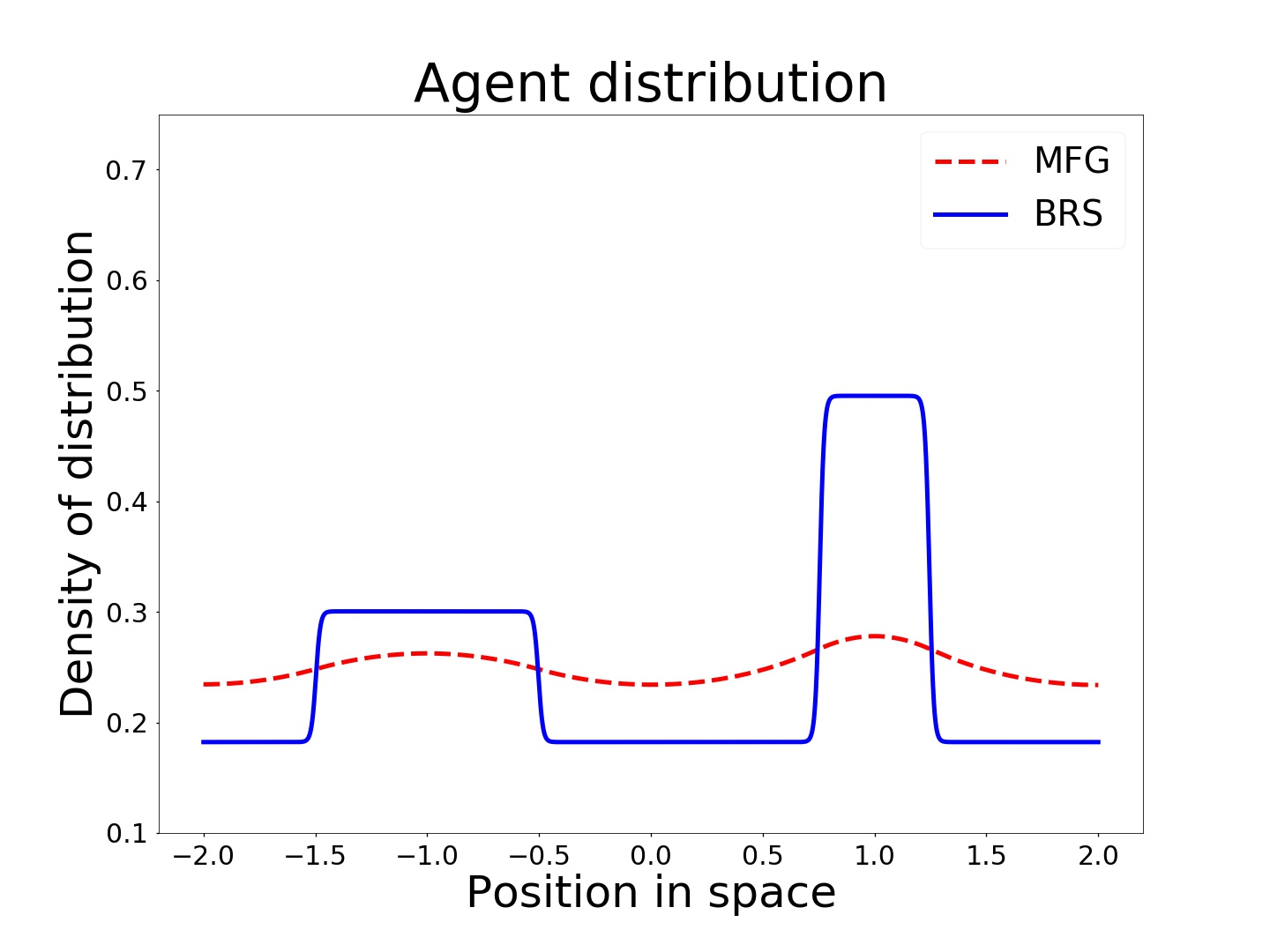}
    	\subcaption{$\frac{\sigma^2}{2} = 1$}
    	\label{fig:double well 5b}
	\end{subfigure}
	\caption{Simulation of BRS and MFG with logarithmic congestion and potential given in figure~\ref{fig:double pot 5}}
	\label{fig:double well 5}
\end{figure}

\bibliographystyle{abbrv}
\bibliography{BRS_MFG_comparison}

\end{document}